\documentclass[11pt,a4paper,twoside]{article}
\usepackage[utf8]{inputenc}
  
            \usepackage[english]{babel}
            \usepackage[leqno]{amsmath} 
            \usepackage{amssymb,amsthm}
            \usepackage{mathtools}
            \usepackage{bbm}
            \usepackage[twoside, bindingoffset=5mm, left=15mm, right=25mm, top=28mm, bottom=28mm,a4paper]{geometry} 
            \usepackage{float}
            \usepackage{fancyhdr}
            \usepackage[breaklinks,plainpages=false,pdfpagelabels=true]{hyperref}
            \usepackage{bookmark}
            \usepackage{enumerate}
            \usepackage{newpxtext}
            \usepackage{keyval}
            \usepackage{constants}
            \usepackage{sectsty}
            \usepackage{graphicx}
            \usepackage{tikz}
            \usetikzlibrary{arrows,calc,shapes,scopes,matrix,intersections}
            \usetikzlibrary{decorations,decorations.markings,decorations.pathreplacing,decorations.pathmorphing}
            \usepackage{pgfplots}
            \pgfplotsset{compat=1.18}
            \usepackage{comment}
            \newcommand{\Beta}{\operatorname{B}}
            \newcommand{\R}{\mathbbm {R}}
            
            \newcommand{\N}{\mathbbm {N}}
            
            \newcommand{\W}{\mathcal {W}}
            
            \newcommand{\diff}{\operatorname{d}\!}

            \newcommand{\Leb}{\mathcal {L}}

            \newcommand{\mrest}{%
              \mathbin{\vrule height 1.6ex depth 0pt width
            0.13ex\vrule height 0.13ex depth 0pt width 1.3ex}
            }
            
            \newcommand {\cb}[1] {\tikz[baseline=(char.base)]{
                        \node[shape=circle,draw,inner sep=2pt] (char) {\itshape{\textbf{#1}}};}}
            \newcommand {\cbm}[1] {\tikz[baseline=(char.base)]{
                        \node[shape=circle,draw,inner sep=2pt] (char) {\scriptsize\itshape{\textbf{#1}}};}}

            \newtheoremstyle{plain}	
            	{1em}{1em}	
            	{\itshape}	
            	{}	
            	{\bfseries}	
            	{}{\newline}	
            	{\thmnumber{#2 }\thmname{#1}\thmnote{ (#3)}}

            \newtheorem{thm}{Theorem}
            \newtheorem{corollary}[thm]{Corollary}
            \newtheorem{lemma}[thm]{Lemma}

            \newtheoremstyle{plainupright}
            	{1em}{1em}	
            	{\normalfont}	
            	{}	
            	{\bfseries}	
            	{}{\newline}	
{\thmnumber{#2 }\thmname{#1}\thmnote{ (#3)}}

\theoremstyle{plainupright}
\newtheorem{remark}[thm]{Remark}

\newlength{\paperinfolabelwidth}

\newcommand{\paperinfo}[2]{%
  \par\smallskip
  \begingroup
    \footnotesize
    \settowidth{\paperinfolabelwidth}{\textit{#1:}\enspace}%
    \hangindent=\paperinfolabelwidth
    \hangafter=1
    \noindent
    \textit{#1:}\enspace #2\par
  \endgroup
}

\providecommand{\keywords}[1]{%
  \paperinfo{Keywords}{#1}%
}

\providecommand{\pacs}[2][MSC 2020]{%
  \paperinfo{#1}{#2}%
}

            \title{Regular Curves, Singular Graphs: Cantor Parts and the Relaxed Willmore Energy}
            \author{Hans-Christoph Grunau, Boris Gulyak\\
            \small  Institut für Analysis und Numerik,\\
                \small  Otto-von-Guericke-Universität,\\
               \small  Fakultät für Mathematik,\\
               \small   Postfach 4120,\\
               \small  39016 Magdeburg,\\
               \small Germany\\
                \small \texttt{bgulyak@ovgu.de}}

\begin{document}

            \maketitle

\begin{abstract}

One might expect that finite relaxed elastic energy rules out diffuse singularities in the derivative, leaving only absolutely continuous and jump parts. This is suggested by the role of $SBV$ in free-discontinuity problems and by interpreting jumps as vertical segments of limiting graphs. We show that it fails for the relaxed one-dimensional Willmore energy. We construct a continuous function $u\in BV((0,1))$ with $D^c u\neq0$ and $\overline{\W}(u)<\infty $, so finite relaxed Willmore energy does not imply $u\in SBV((0,1))$. The idea is to concentrate the Cantor part exactly where the absolutely continuous slope blows up. There the singular diffuse measure meets the blow-up condition of the relaxation theorem, while the weighted curvature term stays integrable.

Geometrically, the example shows that Cantor parts of $BV$-graph derivatives need not be intrinsic singularities of the underlying curve. The graph has an arc-length parametrization of class $C^1\cap W^{2,2}$, and a suitable rotation turns it into a Lipschitz graph whose derivative has no singular part. The construction also rescales to make the relaxed Willmore energy arbitrarily small, and it extends to relaxed $L^p$-curvature energies for all $p>1$.


\end{abstract}

\bigskip         
            
\keywords{relaxed Willmore energy,  elastic energy of curves, $L^p$-curvature energies, functions of bounded variation,  Cantor part, lower semicontinuous relaxation}

\pacs[MSC Classification]{Primary 49J45; Secondary 26A45, 49Q20, 53A04, 74K10}

\section{Introduction}

Curvature energies for graphs form a simple but rich class of geometric variational problems. If $u\in W^{2,p}((0,1))$, the $p$-curvature energy  for $p>1$ of its graph is  
\[   \W_p(u) :=  \int_{\operatorname{graph}[u]}  |\kappa_u|^p \diff s_u
    =  \int_0^1  \frac{|u''(x)|^p}  {\bigl(1+u'(x)^2\bigr)^{(3p-1)/2}}   \diff x \]
where
\[  \kappa_u(x)
    =\frac{\diff\ }{\diff x} \left(\frac{u'(x)}{\sqrt{1+u'(x)^2}} \right)
    = \frac{u''(x)}{\bigl(1+u'(x)^2\bigr)^{3/2}},
    \qquad \diff s_u
    =     \sqrt{1+u'(x)^2} \diff x. \]
Thus the one-dimensional Willmore, also called elastic energy, is $\W(u)=\W_2(u)$. 
For $p>1$ and $u\in L^1((0,1))$, we define the lower semicontinuous
envelope of $\W_p$ with respect to strong $L^1$-convergence by
\[    \overline{\W}_p(u)
    := \inf \left\{ \liminf_{j\to\infty}\W_p(u_j) \,\middle|\, u_j\in W^{2,p}((0,1)),\,   u_j\to u \text{ in }L^1(0,1)   \right\} .\]
We refer to $\overline{\W}_p$ as the relaxed $p$-elastic energy.
 In particular, $\overline{\W}:=\overline{\W}_2$ denotes the
relaxed one-dimensional Willmore energy.

In the graph setting, minimizing sequences for Willmore-type energies may develop vertical parts in the limit. This naturally leads to $BV$-limits. For the two-dimensional Willmore graph problem and its $L^1$-relaxation we refer to Deckelnick, Grunau and Röger \cite{deckelnick2017minimising}. The one-dimensional weighted relaxation theorem of Dal Maso, Fonseca, Leoni and Morini \cite{maso2009higher} is another central ingredient for the present work, together with standard facts on $BV$-functions \cite{ambrosio2000functions}.

For $u\in BV((0,1))$, the distributional derivative decomposes as 
\[
    Du=(u')^a\,\mathcal L^1+D^j u+D^c u,
\]
where $D^j u$ is the jump part and $D^c u$ is the Cantor part. Jump parts have a clear geometric interpretation as vertical segments in graph limits. Cantor parts are more elusive: they are diffuse, meaning atomless, singular measures and one might expect them to be excluded by finite relaxed curvature energy. Recall that $SBV((0,1))$ denotes the subspace of $BV((0,1))$ consisting of functions whose Cantor part vanishes.

\textbf{The main purpose} of this paper is to show that this expectation is false. We
construct
\[     u\in BV((0,1))\cap C^0([0,1]) \quad \text{ such that } \quad  D^c u\neq0,   \qquad  \overline{\W}(u)<\infty.
\]
Hence finite relaxed one-dimensional Willmore energy does not imply $u\in SBV((0,1))$. The same mechanism also applies to the relaxed $p$-curvature energies $\overline{\W}_p$ for every $p>1$. Thus diffuse singular parts are not excluded by finite relaxed $L^p$-curvature energy either. We also want to give a geometrical interpretation of $u$, and study the regularity of the graph of $u$.

Relaxation of curvature-dependent energies has been studied from several viewpoints. For planar curvature functionals and elastica-type energies we refer to Bellettini, Dal Maso and Paolini, and to Bellettini and Mugnai \cite{bellettini1993semicontinuity,bellettini2004elastica,bellettini2007varifolds}. For Cartesian curves and $p$-elastic energies, the work of Acerbi and Mucci \cite{acerbi2017geometric,acerbi2017elastic} provides a geometric relaxation theory in which, for suitable continuous curves, the relaxed elastic energy is described by length plus $p$-curvature. In the case $p=1$, remarkably, the relaxed total curvature of a continuous Cartesian curve  does \emph{not} depend on the Cantor part of the derivative \cite{acerbi2017geometric}: Cantor parts are energetically neutral.  Weak notions of elastic energy for irregular rectifiable curves have been developed further by Mucci and Saracco \cite{mucci2023weak}, who showed that the relaxed $p$-elastic energy of a rectifiable curve is finite if and only if its arc-length parametrization belongs to $W^{2,p}$, see also the recent extension to curves in the sphere \cite{mucci2026sphere}. Our example fits precisely into this picture: the graph constructed below is an intrinsically tame $C^1\cap W^{2,2}$-curve in the sense of this theory, while the Cantor part appears only in one particular graph projection.

In the one-dimensional setting, the Willmore functional is the elastic energy of a curve. Boundary value problems and stability questions for the one-dimensional Willmore equation in graph form were studied by Deckelnick and Grunau \cite{deckelnick2007boundary,deckelnick2009stability}. Related boundary value, obstacle, threshold and $p$-elastic problems for curves and elastic graphs are treated, for instance, by Mandel \cite{mandel2015willmorecurves}, as well as in \cite{mueller2019obstaclecurves,mueller2023liyau, grunau2023obstacle,dallacqua2024pelasticobstacle,grunau2025optimality}. Closely related Willmore problems for surfaces of revolution, which reduce to one-dimensional profile curve problems, go back to Langer and Singer \cite{langer1984hyperbolic} and are studied, for instance, in \cite{dallacqua2008classical,dallacqua2011symmetric,eichmann2019grunau, schlierf2024dirichlet,dallacqua2025dimensionreduction}. The corresponding geometric flow is the elastic flow, for existence, computation, and long-time behaviour we refer to Dziuk, Kuwert and Sch\"atzle \cite{dziuk2002elastic}, Lin \cite{lin2012elasticflow}, Dall'Acqua, Lin, Pozzi and Spener \cite{dallacqua2017gradient,dallacqua2016lojasiewicz}, and to the survey of Mantegazza, Pluda and Pozzetta \cite{mantegazza2021surveyelastic}.

The expectation that finite energy excludes diffuse singular parts is also suggested by the theory of free discontinuity problems. In the framework of De Giorgi and Ambrosio \cite{degiorgi1988funzionale}, functionals such as the Mumford-Shah energy \cite{mumford1989optimal} are posed in $SBV$, and by Ambrosio's compactness theorem \cite[Thm.~4.8]{ambrosio2000functions} energy-bounded sequences cannot develop a Cantor part in the limit. The same philosophy persists for second-order functionals such as the Blake-Zisserman energy \cite{blake1987visual,carriero1997strong}. In contrast, the relaxed Willmore energy of graphs is a geometrically natural second-order functional whose finiteness does \emph{not} exclude a nontrivial Cantor part.

The mechanism in this work is that the Cantor part is concentrated exactly on the set where the absolutely continuous slope blows up. Thus the singular diffuse part is compatible with the relaxation theory. Geometrically, this also shows that the Cantor part of a graph derivative is projection-dependent: after a suitable rotation, the same curve may be represented by a Lipschitz graph without any singular derivative. This should be compared with the rectifiability theory for curvature varifolds. In particular, by second-order rectifiability, proved by Menne in \cite{menne2013second}, a graph with finite Willmore energy represented by a varifold is, up to an area null set, covered by a countable collection of $C^2$-manifolds. All this makes the following main theorem very surprising, since geometrically one would rather expect infinite relaxed Willmore energy because of the non-vanishing Cantor part. It seems that by naively applying projection techniques, one can get these irregular sets.

\subsection{Main Results}
We now state the condensed versions of the main results of the paper. We begin with the Willmore case for two reasons. First, this is the case from which the original motivation for the problem arose. Second, it is the simplest and geometrically most transparent instance of the construction, and therefore provides a convenient model before the same idea is formulated in the general weighted setting and, in particular, for $L^p$-curvature energies.  It  relies heavily on the relaxation theory of \cite[Prop.~2.3 and Thm.~3.4]{maso2009higher}. Part of these results can already be found in the second author's PhD thesis \cite{gulyak2024boundary}. 
\begin{thm}[Finite relaxed Willmore energy with Cantor part] \label{thm:main-finit-will}
There exists a function $u\in BV((0,1))\cap C^0([0,1])$ with $\overline \W(u)<\infty$ so that  $|D^c u |((0,1))> 0 $ and in particular $u\notin SBV((0,1))$.

The graph associated with $u$ admits an arc-length parametrization of class $C^1\cap W^{2,2}$, and its squared curvature energy equals $\W^a(u)$. Nevertheless, the classical curvature on the smooth graph pieces is unbounded near every point of the Cantor trace. Hence the graph is not locally a regular $C^2$-curve at any Cantor point.
\end{thm}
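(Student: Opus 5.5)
The plan is to build $u$ as an inverse function. We start from a nice curve whose graph over the \emph{vertical} axis is regular, and we rotate the roles of the coordinates so that the Cantor structure appears only in the projection onto the $x$-axis.

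\textbf{Construction.} Let $C\subset[0,1]$ be a fat Cantor set, i.e.\ closed, nowhere dense, with $\mathcal L^1(C)>0$. Let $\{I_k\}$ be the complementary open intervals, with lengths $\ell_k$. We build a strictly increasing $g\in C^1([0,1])\cap W^{2,2}((0,1))$ with
\[ g'>0 \text{ on } [0,1]\setminus C, \qquad g'=0 \text{ on } C. \]
The natural choice is $g'=h$, where $h$ vanishes on $C$ and on each $I_k$ is a bump of height $\varepsilon_k\to0$ and width $\ell_k$. Then
\[ \int_{I_k}|h'|^2\lesssim \varepsilon_k^2/\ell_k, \]
so we choose $\varepsilon_k$ with $\sum_k\varepsilon_k^2/\ell_k<\infty$, for instance $\varepsilon_k=\ell_k$. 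We then set $u:=g^{-1}$ on $[0,g(1)]$ and rescale to $(0,1)$. Then $u$ is continuous and increasing, hence $u\in BV$.

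\textbf{Decomposition of $Du$.} Since $u$ is increasing, $Du$ is the pushforward of $\mathcal L^1$ under $g$; equivalently, $Du(A)=\mathcal L^1(g^{-1}(A))$. The set $g(C)$ is Lebesgue-null, because $g'=0$ on $C$ (area formula). On the other hand $Du(g(C))=\mathcal L^1(C)>0$. So $Du$ has a singular part, and there are no jumps because $u$ is continuous. Hence $D^cu=Du\mrest g(C)\neq0$ and $u\notin SBV$. The absolutely continuous part is $(u')^a=1/g'\circ u$ off $g(C)$. This blows up near $g(C)$, since $g'\to0$ there.

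\textbf{Energy.} The graph of $u$ is the rotated-and-reflected graph of $g$, which is a $C^1\cap W^{2,2}$ curve. Its curvature in arc length is
\[ \kappa = \frac{g''}{(1+g'^2)^{3/2}}, \]
and since $g'$ is bounded, $\int\kappa^2\diff s\le C\int|g''|^2<\infty$. On the open set $(0,1)\setminus g(C)$ this coincides with
\[ \W^a(u)=\int \frac{|(u')^{a\,\prime}|^2}{(1+(u')^{a\,2})^{5/2}}\diff x, \]
by a change of variables that is routine piecewise on each $g(I_k)$. To conclude $\overline\W(u)<\infty$, and in fact $\overline\W(u)=\W^a(u)$, we invoke the characterization in \cite[Prop.~2.3, Thm.~3.4]{maso2009higher}. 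We expect that result to say that $\overline\W(u)$ is finite whenever the absolutely continuous part has finite weighted energy, $Du$ has no atoms (or only admissible jumps), and the singular part $D^su$ is concentrated where $|(u')^a|=\infty$ in the appropriate limit sense. The main obstacle is verifying this blow-up condition precisely on the support of $D^cu$, namely that $(u')^a(x)\to\infty$ as $x\to g(C)$ from the complement. This holds because $g'$ is continuous and vanishes on $C$. If the relaxation theorem instead demands a quantitative or pointwise version, we shape the bumps $h$ near the endpoints of $I_k$ uniformly, for example $h(t)\sim \mathrm{dist}(t,C)$ near $\partial I_k$, to enforce it. An alternative route avoids the theorem: smooth the curve slightly by tilting it so that it becomes a graph with $g'\ge\delta$, which gives approximants $u_\delta\in W^{2,2}$ with $u_\delta\to u$ in $L^1$ and $\W(u_\delta)\to\int\kappa^2\diff s$. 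This gives the finiteness claim directly.

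\textbf{Unbounded curvature.} Near each point of $C$ there are infinitely many small intervals $I_k$. On such an interval $h$ must rise to height $\varepsilon_k$ and fall back over length $\ell_k$, so $\sup_{I_k}|g''|\gtrsim\varepsilon_k/\ell_k$. If we choose $\varepsilon_k=\ell_k^{\alpha}$ with $\alpha<1$ and $2\alpha-1>0$, say $\alpha=3/4$, this ratio tends to infinity, while $\sum\ell_k^{2\alpha-1}<\infty$ for a fat Cantor set built with summable geometric lengths. Hence $|\kappa|$ is unbounded near every Cantor point, and the graph is not locally $C^2$ there. This completes the plan.
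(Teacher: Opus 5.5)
Your argument is correct in substance, but it runs in the opposite direction from the paper. The paper designs the slope of $u$ itself. On a Lebesgue-null self-similar Cantor set $\mathbb D_\delta$ it builds $w_\delta$ blowing up exactly there, with level-dependent offsets $2^{sj}$, and adds the Cantor function $f_\delta$ to create $D^cu$. It tunes $\beta,s,\delta$ so that $w_\delta\in L^1$ and $\Psi_2\circ w_\delta\in W^{1,2}$, and gets $\overline{\W}(u)\le\W^a(u)$ from the Dal Maso--Fonseca--Leoni--Morini recovery sequence. Only afterwards does it prove, via the polar decomposition of $Dc_u$ and the arc-length push-forward, that the graph is a $C^1\cap W^{2,2}$ curve.

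You start instead from $g=u^{-1}$ with $g'$ vanishing on a fat Cantor set. The Cantor part is then automatic from $\mathcal L^1(C)>0=\mathcal L^1(g(C))$, the curve regularity is immediate from $g\in W^{2,2}$, and the energy bound is just $\int\kappa^2\diff s\le\int|h'|^2$. Your tilting argument gives an explicit recovery sequence, so the relaxation theorem is not even needed: $g_t:=g+t\,x$ has inverses $u_t\in W^{2,2}$ with slope at most $1/t$, $u_t\to u$ uniformly, and, restricted to $(0,g(1))$, $\W(u_t)\le\int_0^1|h'|^2(1+(h+t)^2)^{-5/2}\diff x\le\W^a(u)$. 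In fact the paper's example is a special case of your scheme, since $(u^{-1})'=1/w_\delta\circ u^{-1}$ vanishes on the fat Cantor set $u(\mathbb D_\delta)$, which has measure $1$. The paper's route buys the general weighted Theorem~\ref{thm:main-Xpsi}, where $\psi$ is arbitrary and no rotation invariance is available. Yours is shorter, and it gives the $\W_p$ case for all $p>1$ and the $C^2$ variant without parameter bookkeeping; for the latter take $\varepsilon_k=\ell_k^2$ and a profile like $\sin^2(\pi t)$.

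Three points to tighten.

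\emph{Bump heights and the choice of $C$.} Commit to one choice of heights: with $\varepsilon_k=\ell_k$ the curvature stays bounded, so for the last assertion you need $\varepsilon_k=\ell_k^{3/4}$. You also need a profile positive on all of $I_k$, since otherwise $g$ is not strictly increasing and $u$ jumps. Finally you need $\sum_k\ell_k^{1/2}<\infty$, which genuinely restricts $C$. The classical Smith--Volterra--Cantor set (removing $2^{n-1}$ intervals of length $4^{-n}$) gives $\sum_n 2^{n-1}2^{-n}=\infty$, whereas lengths $8^{-n}$ work.

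\emph{Energy identity.} Equating the curve's squared curvature energy with $\W^a(u)$ uses that the arc over $C$, which has positive length, carries no curvature, i.e.\ $h'=0$ a.e.\ on $\{h=0\}$. If you go through the relaxation theorem, you must check $\Psi_2\circ(u')^a\in W^{1,2}$ across $g(C)$. This works as in Step 4 of the proof of Thm.~\ref{thm:finit_will}, since the function equals $M$ on $g(C)$ and tends to $M$ at both ends of each $g(I_k)$.

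\emph{The equality claim.} Drop ``in fact $\overline{\W}(u)=\W^a(u)$''. The relaxation theorem describes $\overline{\mathcal F}_2$, which includes the total variation, and yields only the upper bound $\overline{\W}(u)\le\W^a(u)$. A matching lower bound for the pure Willmore term is not available (the paper notes this relaxation is open), and the statement does not need it.
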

\begin{proof} For the full result, see Thm.~\ref{thm:finit_will} for the construction and the relaxed-energy estimate, and Thm.~\ref{thm:c1curv} for the geometric interpretation.
\end{proof}

\begin{figure}[h]
\centering
\includegraphics{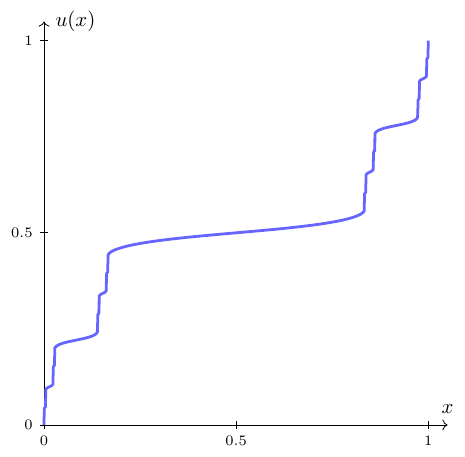}
\caption{{A rescaled version of the example, $u(x)=\frac12 U(x)/U(1)+\frac12 f_\delta(x)$, where $U$ and $f_\delta$ are constructed in the proof of Thm.~\ref{thm:finit_will}. The parameters are $s=\frac65$, $\delta=\frac16$, and $\beta=\frac25$. By Corollary~\ref{cor:no-positive-gap-pure-Willmore} this rescaled example still has finite relaxed Willmore energy.}}
\label{fig:plot}
\end{figure}

We also record several consequences and geometric interpretations of the construction. First, as explained in Corollary~\ref{cor:no-positive-gap-pure-Willmore}, there is no positive energy threshold for the pure relaxed Willmore term which excludes Cantor parts. The example of Thm.~\ref{thm:main-finit-will} can be rescaled so that the relaxed Willmore energy becomes arbitrarily small while the Cantor part remains nontrivial.

Second, the Cantor part is not an intrinsic singularity of the underlying curve. It is a feature of the chosen projection direction. As discussed in Remark~\ref{rem:rot}, after a suitable clockwise rotation, the same geometric curve can be represented as a Lipschitz graph. The derivative of this rotated graph function has no singular part.

There is also a complementary $C^2$-variant of the construction, see Remark~\ref{rem:C2-variant}. By modifying the singular profile, one can arrange that the curvature of the smooth graph pieces extends continuously across the Cantor set by setting it equal to zero there. In that case the graph is a one-dimensional $C^2$-submanifold of $\mathbb R^2$, although the original graph function still satisfies $D^c u\neq0$. Thus Cantor parts in graph projections may occur even for geometrically $C^2$-regular curves.

This should be compared with Menne's second-order rectifiability theorem
\cite[Thm.~3.6]{menne2013second}, see Remark~\ref{rem:menne}. There is no contradiction: the theorem gives a measure-theoretic $C^2$-covering of the associated varifold, whereas the Cantor part appears in a particular graph projection.


The analytic mechanism behind the construction is more general. We prove the following weighted version, extending the Cantor-part examples from \cite[Prop.~2.3 and Remark~3.2 (iv)]{maso2009higher} to all exponents $p>1$.

\begin{thm}[General weighted version]
\label{thm:main-Xpsi}
Let $p>1$, and let $\psi:\mathbb R\to(0,+\infty)$ be a bounded Borel weight satisfying the standard assumptions of the relaxation theory. If, for some
$\alpha>2p-1$,
\[
    \psi(t)\le C t^{-\alpha}
    \qquad\text{for all }t\ge1,
\]
then there exists
\[
    u\in X_\psi^p((0,1))
\quad \text{
such that } \quad
    D^c u\neq0 .
\]
\end{thm}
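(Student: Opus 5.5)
We construct $u$ explicitly as the sum of an absolutely continuous part whose slope blows up on a Lebesgue-null Cantor set $K$ and a Cantor function supported on $K$. Membership in $X_\psi^p((0,1))$ is then checked through the characterization of \cite[Prop.~2.3 and Thm.~3.4]{maso2009higher}. That characterization requires three things:
\begin{itemize}
\item $u\in BV$ has no jump part;
\item $u'^a\in W^{1,1}_{\rm loc}$ off the support of $D^s u$, with $|u'^a(x)|\to\infty$ as $x$ approaches that support;
\item $\int \psi(u'^a)\,|(u'^a)'|^p\,dx<\infty$.
\end{itemize}
The Cantor function contributes exactly $D^c u\neq 0$ and leaves the weighted integral unchanged. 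The whole task is therefore to choose $K$ and the blow-up profile so that the weighted curvature integral is finite.

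\textbf{Choice of $K$ and profile.} Fix a small $s\in(0,1)$ and let $K\subset(0,1)$ be a self-similar Cantor set with $2^k$ gaps of length $\ell_k\simeq \lambda^k$ at generation $k$, where $\lambda$ is chosen so that $2\lambda^{s}<1$. Then $\mathcal L^1(K)=0$, and sums of the form $\sum_k 2^k\ell_k^{\theta}$ converge whenever $\theta>s$. Let $c_K$ be the associated Cantor function; then $Dc_K=D^c c_K\neq0$ and $c_K\in C^0$. On the complement of $K$ put $v(x)=d(x)^{-\beta}$, where $d=\operatorname{dist}(\cdot,K\cup\{0,1\})$. If needed, replace $d$ by a smooth comparable function on each gap, so that $v\in W^{1,\infty}_{\rm loc}$ there, $v\ge1$, and $|v'|\lesssim d^{-\beta-1}$. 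Set
\[
u(x)=\int_0^x v\,dt+c_K(x).
\]

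\textbf{Integrability conditions.} Summing over gaps and integrating $d^{-\theta}$ on each gap gives $\int d^{-\theta}\lesssim\sum_k 2^k\ell_k^{1-\theta}$, which is finite when $1-\theta>s$. We need this for two exponents.
\begin{itemize}
\item For $v\in L^1$ (so that $u\in BV\cap C^0$), take $\theta=\beta$. This requires $1-\beta>s$.
\item For the energy, using $v\ge1$ and $\psi(t)\le Ct^{-\alpha}$, we get $\psi(v)|v'|^p\lesssim d^{\alpha\beta-(\beta+1)p}$. Taking $\theta=(\beta+1)p-\alpha\beta$, finiteness requires $(\beta+1)p-\alpha\beta<1-s$, i.e.\ $\beta(\alpha-p)>p-1+s$.
\end{itemize}
Both conditions hold for some $\beta<1-s$ exactly when $(1-s)(\alpha-p)>p-1+s$. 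As $s\to0$ this inequality reduces to $\alpha>2p-1$. So the hypothesis on $\alpha$ allows us to pick $s$ small, and then $\beta$ slightly below $1-s$. Near $0$ and $1$, and wherever $v$ is bounded, the boundedness of $\psi$ takes care of the integrand.

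\textbf{Main obstacle.} The delicate step is verifying the precise structural hypotheses of \cite[Thm.~3.4]{maso2009higher}, not the estimates above. In particular we must check the following.
\begin{itemize}
\item The singular part of $Du$ is supported exactly on $K$, where $v\to+\infty$ from both sides. This is built into the construction.
\item There is no jump part. This holds because $u$ is continuous.
\item The "standard assumptions" on $\psi$, namely positivity, boundedness and integrability of $\psi^{1/p}$ at infinity, are met. Integrability at infinity is automatic since $\alpha>2p-1>p$.
\end{itemize}
Provided the weighted finiteness condition there is exactly $\int\psi(u'^a)|(u'^a)'|^p<\infty$ together with the blow-up of $u'^a$ on $\operatorname{supp}D^s u$, the computation above yields $u\in X_\psi^p((0,1))$ with $D^cu=Dc_K\neq0$.
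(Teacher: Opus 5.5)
Your proposal is correct and uses the same overall strategy as the paper: a thin self-similar Cantor set, its Cantor function, and a monotone primitive whose slope tends to $+\infty$ exactly on that set. The difference is how the slope profile is parametrized.

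\textbf{Comparison of the profiles.} The paper places on each gap of generation $j$ a fixed smooth profile $1+\Phi$, rescaled to the gap. It multiplies this by an independent background slope $2^{sj}$, which requires the three parameters $\beta,s,\delta$ and conditions (C2p)--(C5p). Your choice $v=d^{-\beta}$ equals $\ell^{-\beta}\min\{y,1-y\}^{-\beta}$ on a gap of length $\ell$, where $y$ is the rescaled variable. So the amplitude is tied to the gap length, and your construction is essentially the paper's with $2^{s}=\delta^{-\beta}$ (paper's $s$). Under this identification, your two requirements amount to $2\lambda^{1-\beta}<1$ and $2\lambda^{\beta(\alpha-p)+1-p}<1$. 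These are exactly the paper's (C4) and (C5p).

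\textbf{What each version buys.} Yours has fewer parameters. Continuity of $v$ as a $[0,+\infty]$-valued map is immediate, and so is the blow-up $v(x)\ge|x-\chi|^{-\beta}\to\infty$ at every $\chi\in K$; the paper needs a separate argument via $w_\delta\ge 2^{sj}$ with $j\to\infty$. The threshold $\alpha>2p-1$ also appears transparently, as the limit of $(1-s)(\alpha-p)>p-1+s$ when your thinness exponent $s\to0$. The paper's version has an extra free parameter and a profile that is smooth on each gap. That smoothness is convenient for the classical curvature computations in Theorem~\ref{thm:c1curv} and Remark~\ref{rem:C2-variant}. Your $d^{-\beta}$ has a kink at the midpoint of every gap; this is harmless for $X_\psi^p$, but it would have to be smoothed for those geometric statements.

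\textbf{A step you must make explicit.} Your three bullets are not the definition of $X_\psi^p$. Membership requires $\Psi_p\circ(u')^a\in W^{1,p}((0,1))$ on the whole interval. Finiteness of $\int_{(0,1)\setminus K}\psi(v)|v'|^p\,dx$ only controls the derivative on the gaps; it does not exclude a singular part of $D(\Psi_p\circ v)$ on $K$. Indeed, under a mere condition $|u'^a|\to\infty$, a slope tending to $+\infty$ on one side of a point and $-\infty$ on the other would make $\Psi_p\circ u'^a$ jump from $M_p$ to $0$.

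In your construction this is easily settled, exactly as in Step~4 of the paper. The function $\Psi_p\circ v$ is continuous on $[0,1]$, equals $M_p$ on $K\cup\{0,1\}$, and is locally Lipschitz on each gap. Hence the gap-wise derivative $G$, extended by $0$ on $K$, integrates to zero over every gap, and $\Psi_p\circ v(x)=M_p+\int_0^x G\,d\xi$ for all $x$. Your estimate gives $G\in L^p$. Add the sign condition: $D^s u=D^c c_K\ge 0$ is concentrated on $K\cap(0,1)=Z^+[(u')^a]$, and $Z^-=\emptyset$. Together these give $u\in X_\psi^p((0,1))$.

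\textbf{Minor point.} Since $v\ge 2^\beta\ge1$ everywhere, the decay bound on $\psi$ applies on all of $(0,1)$. This includes the regions near $0$ and $1$, where $v$ also blows up, so boundedness of $\psi$ is never actually used.
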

\begin{proof}
    For the notation see Section~\ref{sec:preliminaries}. For the proof, see Thm.~\ref{thm:cantor_part_Xp}. 
\end{proof}
Thm.~\ref{thm:main-Xpsi} extends \cite[Prop.~2.3 and Remark~3.2~(iv)]{maso2009higher}, where such examples were obtained only for exponents $p\in(1,2)$ sufficiently close to $1$, to all exponents $p>1$.

The  construction applies to relaxed $L^p$-curvature energies of $\W_p$ for $ p>1$. Indeed, the corresponding weight has decay exponent $3p-1>2p-1$, so the Cantor-profile construction is compatible with the relaxation theorem for $\mathcal F_p$. Consequently, finite relaxed $L^p$-curvature energy does not exclude a nontrivial Cantor part of the distributional derivative in some projection. That means that one cannot avoid these phenomena just by choosing the power of the curvature contribution high enough. 

\begin{corollary}[Cantor parts for \texorpdfstring{$L^p$}{Lp}-curvature energies]
Let $p>1$.  Then there exists a function $ u\in BV((a,b))\cap C^0([a,b])$ such that
\[
    D^c u\neq 0
    \qquad\text{and}\qquad
    \overline{\W}_p(u)<\infty .
\]
In particular, finite relaxed $L^p$-curvature energy does not imply
$u\in SBV((a,b))$. 
\end{corollary}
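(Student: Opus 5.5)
The plan is to reduce the corollary to Thm.~\ref{thm:main-Xpsi}, applied with the weight coming from the $p$-curvature integrand, and then transfer the result from $(0,1)$ to $(a,b)$ by an affine change of variables. Writing $\W_p(u)=\int \psi_p(u')|u''|^p\diff x$ with
\[
    \psi_p(t):=(1+t^2)^{-(3p-1)/2},
\]
I note that $\psi_p$ is continuous, bounded by $1$, strictly positive, and even. It therefore satisfies the standard assumptions of the relaxation theory of \cite[Thm.~3.4]{maso2009higher}, which are of the type: Borel, bounded, bounded away from zero on compacts, and integrable in the sense that $\int_{\R}\psi_p^{1/p}\diff t<\infty$. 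For the last condition, $\psi_p^{1/p}(t)\sim |t|^{-(3p-1)/p}$ as $|t|\to\infty$, and $(3p-1)/p>1$ for $p>1$, so the integral converges. Moreover, for $t\ge1$ we have $\psi_p(t)\le t^{-(3p-1)}$, and $\alpha:=3p-1>2p-1$ since $p>0$. So the decay hypothesis of Thm.~\ref{thm:main-Xpsi} holds with $C=1$.

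Thm.~\ref{thm:main-Xpsi} then yields $u\in X^p_{\psi_p}((0,1))$ with $D^cu\neq0$. The next step is to identify membership in $X^p_{\psi_p}$ with finiteness of $\overline{\W}_p$. By the relaxation theorem of \cite{maso2009higher}, recalled in Section~\ref{sec:preliminaries}, the $L^1$-lower semicontinuous envelope of $\mathcal F_p(v)=\int\psi_p(v')|v''|^p\diff x$ (which here is $\W_p$) is finite exactly on $X^p_{\psi_p}$ and is given there by an explicit formula: the absolutely continuous energy plus contributions of the jumps. Hence $\overline{\W}_p(u)<\infty$. I also need $u\in BV\cap C^0([0,1])$. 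Rather than relying on abstract properties of $X^p_\psi$, I would take $u$ to be the explicit function from the proof of Thm.~\ref{thm:cantor_part_Xp}, which (like the Willmore example) is continuous and $BV$; $u$ has no jump part, so it is continuous and the relaxed energy reduces to the absolutely continuous term.

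Finally, set $\tilde u(y):=u\bigl((y-a)/(b-a)\bigr)$ on $(a,b)$. Continuity and $BV$ are preserved, and the Cantor part is pushed forward and scaled by $1/(b-a)$, hence remains nonzero. For the energy, if $u_j\to u$ in $L^1$ with $u_j\in W^{2,p}$, then $\tilde u_j\to\tilde u$ in $L^1((a,b))$. A direct computation gives $\W_p(\tilde u_j)$ as an integral of the same integrand evaluated at slopes $u_j'/(b-a)$, multiplied by a power of $b-a$. This change of slope alters the weight $\psi_p$ only by factors bounded above and below by constants depending on $b-a$, since $(1+t^2/\lambda^2)$ and $(1+t^2)$ are comparable. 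So $\liminf\W_p(\tilde u_j)\le C(a,b)\liminf\W_p(u_j)$, and therefore $\overline{\W}_p(\tilde u)<\infty$.

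The main obstacle is the identification in the second paragraph: making sure the hypotheses of the relaxation theorem as stated in Section~\ref{sec:preliminaries} (in particular the integrability of $\psi^{1/p}$ or the analogous condition) are met by $\psi_p$ for every $p>1$, and that $X^p_{\psi_p}$ is precisely the finiteness domain of $\overline{\W}_p$ under $L^1$-convergence. Everything else is bookkeeping.
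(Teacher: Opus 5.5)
Your outer structure matches the paper's, and most of it is fine: the check that $\psi_p(t)=(1+t^2)^{(1-3p)/2}$ satisfies \eqref{eq:psibed} and the decay bound with $\alpha=3p-1>2p-1$, the appeal to Thm.~\ref{thm:cantor_part_Xp}, using the explicit construction to get continuity, and the affine transfer to $(a,b)$. The paper instead applies the theorem on $(a,b)$ directly.

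The gap is in the relaxation step. Thm.~\ref{thm:maso} is not a statement about $\W_p$. The functional relaxed there is $\mathcal F_p(v)=\int_a^b|v'|\diff x+\int_a^b\psi(v')|v''|^p\diff x$. On $X^p_\psi$ its envelope equals $|Du|((a,b))+\int_a^b|(\Psi_p\circ(u')^a)'|^p\diff x$. The term $|Du|((a,b))$ contains the absolutely continuous and Cantor parts, not only jumps, and it is exactly the trace of the total variation term.

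You cannot drop that term and keep the characterization: your claim that $\overline{\W}_p$ is finite precisely on $X^p_{\psi_p}$ is false. Take $u(x)=x^{-1/2}$, which lies in $L^1((0,1))$ but not in $BV((0,1))$, so $u\notin X^p_{\psi_p}$. The integrand of $\W_p(u)$ is $O(x^{(4p-3)/2})$ near $0$. Hence $u_\varepsilon(x):=(x+\varepsilon)^{-1/2}\in W^{2,p}((0,1))$ converge to $u$ in $L^1$ with $\sup_{\varepsilon}\W_p(u_\varepsilon)<\infty$, i.e.\ $\overline{\W}_p(u)<\infty$. This fits the paper's remark that the exact relaxation of the pure curvature term is open.

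What repairs the argument is a comparison rather than a characterization, and it gives exactly the inclusion you need. Since $\W_p(v)\le\mathcal F_p(v)$ for every $v\in W^{2,p}$, every sequence $u_j\in W^{2,p}((a,b))$ with $u_j\to u$ in $L^1$ satisfies $\liminf_j\W_p(u_j)\le\liminf_j\mathcal F_p(u_j)$. Hence $\overline{\W}_p\le\overline{\mathcal F}_p$, and Thm.~\ref{thm:maso} gives $\overline{\mathcal F}_p(u)<\infty$ for $u\in X^p_{\psi_p}$. This is precisely the paper's proof.

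Likewise, your statement that the relaxed energy of the jump-free $u$ \emph{reduces to the absolutely continuous term} does not follow from the theorem as you stated it. The bound $\overline{\W}_p(u)\le\int_0^1|(\Psi_p\circ w_\delta)'|^p\diff x$ requires subtracting the total variation along a recovery sequence and using its lower semicontinuity, as in Step 7 of the proof of Thm.~\ref{thm:finit_will}. Mere finiteness, which is all the corollary asks for, does not need it.
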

\begin{proof}
    See the proof of Corollary~\ref{cor:Lp-curvature-cantor} 
\end{proof}

Geometrically, as in the Willmore case, the graph associated with the constructed function admits an arc-length parametrization and its weak scalar curvature belongs to $L^p(0,L)$. Hence the Cantor part in the derivative is compatible with finite $L^p$-curvature of the underlying curve. By Remark~\ref{rem:C1W2pcurves}, the graph can be chosen as a $C^2$-curve, although the corresponding graph function still has $D^c u\neq0$.

Finally, the $C^2$-variant has a surface-of-revolution consequence. After adding a sufficiently large constant to make the profile strictly positive, one may revolve the resulting $C^2$-curve around the $x$-axis. This gives a $C^2$-surface of revolution with finite Willmore energy, while the generating radius function belongs to $BV\cap C^0$ and still has a nontrivial Cantor part in its distributional derivative.

\subsection{Further research}

To the authors’ knowledge, it is still open whether \emph{Willmore minimizers} may have a nonzero Cantor part. More precisely, one may ask whether there are boundary conditions, obstacle constraints, or lower-order terms for which a minimizer of the relaxed one-dimensional Willmore functional has a nonvanishing Cantor part. Conversely, it would be important to characterize additional assumptions under which finite relaxed energy, together with minimality, forces $D^c u=0$, or even implies higher regularity away from jump points.

This question is particularly relevant in connection with the \emph{two-dimensional graph problem} studied by Deckelnick, Grunau and Röger \cite{deckelnick2017minimising}. In that setting, minimizing sequences for the Willmore functional may develop vertical parts, and the relaxed functional naturally acts on $BV$-limits. It remains unclear whether genuinely diffuse singular parts can occur in minimizers, or whether some variational structure rules them out.  Although our construction is one-dimensional, it can be embedded into the two-dimensional graph setting by a product construction. Let $u\colon[0,1]\to \R$ have finite one-dimensional  Willmore energy as a graph, then we can extend it to a function $\overline{u}\colon[0,1]^2\to \R$ with finite two-dimensional Willmore energy as a graph. We simply set for all $(x,y)\in[0,1]^2 \colon \overline{u}(x,y)=u(x)$. Then \(\operatorname{graph}[\bar u]\) is the product of the planar graph of \(u\)
with \([0,1]\). Its principal curvatures are \(\kappa_u\) and \(0\). Hence,
with the convention \(H=(k_1+k_2)/2\),
\begin{align*}
    \mathcal W(\bar u)
    = \int_{\operatorname{graph}[\bar u]} H^2\,\diff A
    = \frac14 \int_{\operatorname{graph}[u]}\kappa_u^2\,\diff s
    = \frac14\,\mathcal W(u).
\end{align*}
By the slicing theorem for $BV$-functions \cite[Theorems~3.107 and~3.108]{ambrosio2000functions}, or directly by testing against smooth compactly supported vector fields, one obtains $  D^c\bar u = \,D^c u\otimes\mathcal L^1\mrest(0,1)$. In particular, if $D^c u\neq0$, then $D^c\bar u\neq0$.

A related question is whether the present construction can be modified so that the regular pieces of the graph are not only finite-energy pieces, but \emph{critical points of the smooth Willmore functional}. In the present proof, the singular profile in the absolutely continuous part is chosen in order to guarantee $w\in L^1((0,1))$ and $  \Psi_2\circ w\in W^{1,2}((0,1))$, and to make the Cantor part compatible with the blow-up set of $w$.  It would be interesting to know whether one can choose profiles $w$ on the complementary intervals of the Cantor set such that the corresponding graph pieces solve the stationary Willmore equation and still satisfy $w(x)\to+\infty$ at the endpoints of the intervals. Such a construction would produce a one-dimensional relaxed Willmore critical point with nontrivial Cantor part. 

Another natural direction is to characterize the singular continuous measures that may arise as Cantor parts of distributional derivatives of functions with finite pure relaxed Willmore energy. Finally, to the best of our knowledge, the exact relaxation of the pure Willmore term remains open. The relaxation formula of Dal Maso, Fonseca, Leoni and Morini~\cite{maso2009higher} applies to the augmented functional containing the total variation term, but it does not directly yield a formula for the lower semicontinuous envelope of the pure Willmore energy alone.

\section{Preliminaries}\label{sec:preliminaries}   
In this section we collect the background for the proof of Thm.~\ref{thm:finit_will} below. We first recall the theory of $BV$ functions of one variable, with emphasis on the decomposition of the distributional derivative into absolutely continuous, jump and Cantor part. We then present the relaxation theorem of Dal Maso, Fonseca, Leoni and Morini \cite{maso2009higher}, which for $p=2$ identifies the relaxation of the one-dimensional Willmore energy augmented by a total variation term and is the central tool of our construction.

\subsection*{Functions of bounded variation}        
We briefly recall the theory of $BV$ functions of one variable, following \cite[Subsection 2.1]{maso2009higher} and \cite{ambrosio2000functions}. A function $u \in L^{1}((a,b))$ belongs to $BV((a,b))$ if and only if its \emph{total variation}
\begin{align*}
\operatorname{V}(u,(a,b))
:=\sup \left\{\left. \int_{a}^{b} u\, \varphi' \diff x\ \right|\
\varphi \in C_{0}^{1}((a,b)) \text{ and } \|\varphi\|_\infty \leq 1\right\}
\end{align*}
is finite. In this case the distributional derivative $Du$ of $u$ is a bounded Radon measure on $(a,b)$, and $\left|Du\right|$ denotes its total variation measure. In particular, by \cite[Proposition 3.6]{ambrosio2000functions} it holds $\left|Du\right|((a,b))=\operatorname{V}(u,(a,b))$. By the Lebesgue decomposition with respect to $\Leb^1$, the measure $Du$ splits into an absolutely continuous part and a singular part, 
\begin{align*}
Du=(u')^{a}\, \Leb^{1} + D^{s}u
=(u')^{a}\, \Leb^{1} + D^{j}u + D^{c}u,
\end{align*}
where $D^{j}u$ denotes the jump part and $D^{c}u$ the Cantor part of $Du$, which we characterize below. Every function $u \in BV((a,b))$ is $\Leb^{1}$-a.e.\ differentiable in $(a,b)$, and for $\Leb^{1}$-a.e.\ $x \in (a,b)$ the classical derivative coincides with $(u')^{a}(x)$. Moreover, for every $u \in BV((a,b))$ the left and right approximate limits
\begin{align*}
u_-(y):=\lim_{\varepsilon \searrow 0} \frac{1}{\varepsilon}
\int_{y-\varepsilon}^{y} u(x) \diff x
\quad \text{ and } \quad
u_+(y):=\lim_{\varepsilon \searrow 0} \frac{1}{\varepsilon}
\int_{y}^{y+\varepsilon} u(x) \diff x
\end{align*}
exist at every point $y \in (a,b)$ and define a left-continuous and a right-continuous function, respectively. The functions $u_-$ and $u_+$ coincide $\Leb^1$-a.e.\ in $(a,b)$. The exceptional set where they differ,
\begin{align*}
S_{u}:=\left\{y \in (a,b) \, \big| \, u_-(y) \neq u_+(y)\right\},
\end{align*}
is called the set of essential discontinuities, or jump set, of $u$, it is at most countable. With the counting measure $\mathcal{H}^{0}$ concentrated on $S_u$, the singular part can be written as
\begin{align*}
D^{s}u=(u_+-u_-)\, \mathcal{H}^{0}\mrest S_{u}+D^{c}u,
\end{align*}
that is, the jump part $D^ju=(u_+-u_-)\, \mathcal{H}^{0}\mrest S_{u}$ is a purely atomic measure, while the Cantor part $D^c u$ is a singular diffuse measure.
           
There is also a pointwise notion of variation for functions defined everywhere. We recall that $u:(a,b)\rightarrow \mathbb{R}$ has bounded pointwise variation $\operatorname{pV}(u,(c,d))$ on an interval $(c,d)\subset(a,b)$ if
\begin{align*}
\operatorname{pV}(u,(c,d)):=\sup \sum_{i=1}^{k}\left|u\left(y_{i}\right)-u\left(y_{i-1}\right)\right|<+\infty,
\end{align*}
where the supremum is taken over all finite families of points $(y_{0}, y_{1}, \ldots, y_{k})$ with $c<y_{0}<y_{1}<\cdots<y_{k}<d$ and $k \in \mathbb{N}$. The approximate limits $u_-$ and $u_+$ defined above are in fact precise and good representatives of $u\in BV((a,b))$, in the sense that
for every interval $(c,d)\subset(a,b)$
\begin{align*}
\left|Du\right|((c,d))=\operatorname{pV}(u_-,(c,d))=\operatorname{pV}(u_+,(c,d)).
\end{align*}
Conversely, if a function $u\in L^1((a,b))$ has bounded pointwise variation in $(a,b)$, then it belongs to $BV((a,b))$, with $\left|Du\right|((c,d)) \leq \operatorname{pV}(u,(c,d))$ for every interval $(c,d)\subset(a,b)$.

Finally, we recall a compactness result for $BV$ spaces; see
Ambrosio-Fusco-Pallara~\cite[Theorem~3.23]{ambrosio2000functions}. If $\{u_k\}_{k=1}^\infty$ is a sequence in $BV((a,b))$ satisfying the condition 
\begin{align*}
\sup_{k\in \N} \left\{ \|u_k\|_{L^1((a,b))}+ |Du_k|((a,b))\right\}<\infty,
\end{align*}
then there exist a subsequence $\{u_{k_j}\}_{j=1}^\infty$ and a limit function $u \in BV((a,b))$ such that
\begin{align*}
u_{k_j}\to u \ \text{ in } L^1((a,b)) \text{ as } j\to\infty
\quad \text{ and } \quad \lim_{j\to\infty}\int_a^b \varphi \diff Du_{k_j} = \int_a^b \varphi \diff Du
\ \text{ for all } \varphi\in C_0^0((a,b)),
\end{align*}
i.e.\ the derivatives $Du_{k_j}$ converge weakly-$*$ to $Du$ in the sense of measures.

\subsection*{The relaxation theorem of Dal Maso-Fonseca-Leoni-Morini}
\label{subsec:maso}
 
Let us now focus on some results from \cite{maso2009higher}. There the authors consider the following functional, arising from a total variation-based model for image restoration involving a second-order term that eliminates the staircase effect. For an exponent $p \in (1,+\infty)$ let $\mathcal{F}_{p}: L^{1}((a,b)) \rightarrow[0,+\infty]$ be defined by
\begin{align*}
\mathcal{F}_{p}(u):= \begin{cases}\displaystyle\int_{a}^{b}\left|u'\right| \diff x
+\int_{a}^{b} \psi\left(u'\right)\left|u''\right|^{p} \diff x
& \text{ if } u \in W^{2,p}((a,b)), \\[1ex]
+\infty & \text{ otherwise,}\end{cases}
\end{align*}
where $\psi: \mathbb{R} \rightarrow (0,+\infty)$ is a bounded Borel function to be specified below. In particular, we extend $\mathcal{F}_{p}$ to $L^{1}((a,b))$ by setting $\mathcal{F}_{p}(u):=+\infty$ if $u \in L^{1}((a,b)) \setminus W^{2,p}((a,b))$. Using the theory of relaxation, one identifies the lower semicontinuous envelope of $\mathcal F_p$ with respect to strong $L^{1}$-convergence: for every $u\in L^1((a,b))$ we set
\begin{align}
\overline{\mathcal{F}}_p(u):= \inf \left\{ \left. \liminf_{k\to\infty}
\mathcal{F}_p(u_k) \, \right| \, u_k\to u \text{ in } L^1((a,b)) \right\},
\label{eq:overF_p}
\end{align}
where the infimum is taken over all sequences $\{u_k\}_{k\in\N}$ in $L^1((a,b))$ with $u_k\to u$ in $L^1((a,b))$.

At this point, let us briefly compare $\mathcal{F}_p$ with the Willmore energy. For $p=2$ we obtain
\begin{align}
\mathcal{F}_2(u)=\W(u) + \int_{a}^{b}\left|u'\right| \diff x,
\quad \text{ with } \quad
\psi(\tau):= \frac{1}{(1+\tau^2)^{5/2}} \quad \text{ for all } \tau \in \R.
\label{eq:f2will}
\end{align}
Thus, for $p=2$, the second-order term in $\mathcal F_2$ is exactly the one-dimensional Willmore energy. The relaxation theorem for $\mathcal F_2$ therefore provides a useful framework for constructing finite-energy approximating sequences, although the pure functional $\W$ does not contain the total variation term $\int_0^1 |u'| \diff x$. 

In this context, it is important to notice that, even under the boundary conditions, finiteness of $\W(u)$ alone does not yield an $L^1((0,1))$-bound on $u'$. Such a bound follows only under an additional smallness assumption on $\W(u)$. Without this assumption, the graph in the relaxed setting may contain vertical segments of arbitrarily large length that are not penalized by $\W(u)$.  

The underlying obstruction is classical in the theory of free elastica and is often treated as folklore; for a convenient modern reference, see~\cite[Appendices A and B]{deckelnick2025basin}. The first nontrivial stationary free elastica are given by half-periods of Euler's rectangular elastica and occur at the scale-invariant energy level $2\pi$. In their explicit arc-length parametrization, the horizontal velocity vanishes at the midpoint of a half-period. Thus the graphical parametrization develops a vertical tangent at this first nontrivial threshold.

But first, we impose two additional conditions on the bounded Borel function $\psi: \mathbb{R} \rightarrow (0,+\infty)$, namely
\begin{align}
M:=\int_{-\infty}^{+\infty}(\psi(t))^{1/p} \diff t<+\infty
\quad \text{ and } \quad
\inf_{t \in K} \psi(t)>0 \ \text{ for every compact set } K \subset \mathbb{R}.
\label{eq:psibed}
\end{align}
If we now define $\Psi_{p}: \overline{\mathbb{R}} \rightarrow[0,M]$ as the primitive of $\psi^{1/p}$ by
\begin{align}
\Psi_{p}(t):=\int_{-\infty}^{t}(\psi(s))^{1/p} \diff s,
\label{eq:psi_p}
\end{align}
and while $\Psi_{p}^{-1}:[0,M] \rightarrow \overline{\mathbb{R}}$  denotes its inverse function, then for every $u\in W^{2,p}((a,b))$ we obtain
\begin{align}
\mathcal{F}_p(u)= \int_a^b |u'| \diff x
+\int_a^b \left|\frac{\diff\ }{\diff x}\Big( \Psi_p \circ u' \Big)(x)\right|^p \diff x.
\label{eq:F_prew}
\end{align}

In \cite[Theorem 3.4]{maso2009higher} the authors characterize the relaxation of the functional $\mathcal{F}_p$ with respect to strong convergence in $L^1((a,b))$. To this end, they introduce the subspace of $L^{1}$-functions that can be approximated by $\mathcal{F}_p$-bounded sequences, which they call $X_{\psi}^{p}((a,b))$ \cite[Definition 3.1 and Remark 3.2]{maso2009higher} and which we recall now. In view of \eqref{eq:F_prew}, one of the properties of a function $u \in X_{\psi}^{p}((a,b))$ has to be $v:=\Psi_{p} \circ (u')^{a}\in W^{1,p}((a,b))$. Since $\Psi_p^{-1}$ is continuous and $v\in C^0([a,b])$ by Sobolev embedding, it follows that $\Psi_{p}^{-1}(v)$ is continuous on $[a,b]$ with values in $\overline{\mathbb{R}}$.  Therefore, there is a continuous \(\overline{\mathbb R}\)-valued representative of the  absolutely continuous density $(u')^{a}$ induced by $\Psi_{p}^{-1}(v)$. Hence, in all pointwise statements  for $(u')^{a}$ below, we use the chosen continuous representative of the absolutely continuous so-called Radon-Nikodym density $(u')^{a}$.

Next,the sets where the continuous \(\overline{\mathbb R}\)-valued representative of the absolutely continuous part $(u')^a$ blows up are denoted  by
\begin{align*}
Z^{+}\left[(u')^{a}\right]:=\left\{x \in (a,b) \, \left|\, (u')^{a}(x)= +\infty \right.\right\},
\quad
Z^{-}\left[(u')^{a}\right]:=\left\{x \in (a,b) \, \left|\, (u')^{a}(x)= -\infty \right.\right\}.
\end{align*}
Then we define
\begin{align*}
X_{\psi}^{p}((a,b)):= \left\{ u\in BV((a,b)) \, \left|\,
\Psi_{p} \circ (u')^{a}\in W^{1,p}((a,b)) , \
\left(D^su\right)^{\pm} \text{ is concentrated on } Z^{\pm}\left[(u')^{a}\right]
\right.\right\}.
\end{align*}

With the set $X_{\psi}^{p}((a,b))$ at hand, the authors of \cite[Theorem 3.4]{maso2009higher} were able to characterize the relaxation of $\mathcal{F}_{p}$ with respect to strong convergence in $L^{1}((a,b))$ defined in \eqref{eq:overF_p}:
 
\begin{thm}[Dal Maso-Fonseca-Leoni-Morini]
\label{thm:maso}
It holds
\begin{align*}
\overline{\mathcal{F}}_{p}(u)= \begin{cases}\displaystyle
\left|Du\right|((a,b))+\int_{a}^{b}\left|v'\right|^{p} \diff x
& \text{ if } u \in X_{\psi}^{p}((a,b)), \\[1ex]
+\infty & \text{ otherwise,}\end{cases}
\end{align*}
where $v:=\Psi_{p} \circ (u')^{a}$, so that the higher-order term depends only on $(u')^{a}$.
\end{thm}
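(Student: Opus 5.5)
The plan is to prove the two inequalities $\overline{\mathcal F}_p\ge\cdots$ (a liminf inequality together with a compactness statement that forces $u\in X_\psi^p$) and $\overline{\mathcal F}_p\le\cdots$ (a recovery sequence). The reformulation \eqref{eq:F_prew} is used throughout. For $u_k\in W^{2,p}$ we set $v_k:=\Psi_p\circ u_k'$, so that $\mathcal F_p(u_k)=\|u_k'\|_{L^1}+\|v_k'\|_{L^p}^p$. Because $0\le v_k\le M$, a bound on $\mathcal F_p(u_k)$ bounds $v_k$ in $W^{1,p}((a,b))$ and $u_k$ in $BV$.

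\emph{Lower bound and finiteness set.} Let $u_k\to u$ in $L^1$ with $\liminf\mathcal F_p(u_k)<\infty$, and pass to a subsequence realizing the liminf. By the $BV$ compactness recalled above, $Du_k\overset{*}{\rightharpoonup}Du$. Since $W^{1,p}$ embeds compactly into $C^0$, we may also assume $v_k\to v$ uniformly and $v_k'\rightharpoonup v'$ weakly in $L^p$.
\begin{itemize}
\item On the open set $\{0<v<M\}$, the uniform convergence and the continuity of $\Psi_p^{-1}$ on $(0,M)$ (note $\psi>0$, so $\Psi_p$ is strictly increasing) give $u_k'\to\Psi_p^{-1}(v)$ locally uniformly. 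Hence $Du=\Psi_p^{-1}(v)\,\mathcal L^1$ there, so $(u')^a=\Psi_p^{-1}(v)$ on this set.
\item On $\{v=M\}$ the limit of $u_k'$ is $+\infty$, so $(u')^a$ is infinite there. Since $u\in BV$, $(u')^a\in L^1$, so this set is $\mathcal L^1$-null; the same holds for $\{v=0\}$. Thus $v=\Psi_p\circ (u')^a$ a.e., and $v\in W^{1,p}$.
\item For the sign condition, take a compact set $K$ disjoint from $Z^+$. Near $K$ we have $v\le M-\eta$ for some $\eta>0$, hence $v_k\le M-\eta/2$ for large $k$, so $u_k'$ is bounded above there. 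The weak-$*$ limit of $(Du_k)^+$ near $K$ is therefore absolutely continuous, so $(D^su)^+$ does not charge $K$. The symmetric argument handles $(D^su)^-$ and $Z^-$.
\end{itemize}
This shows $u\in X_\psi^p$. The inequality then follows from lower semicontinuity: $|Du|((a,b))\le\liminf\|u_k'\|_{L^1}$ under $L^1$-convergence, and $\int|v'|^p\le\liminf\int|v_k'|^p$ by weak lower semicontinuity.

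\emph{Upper bound (recovery sequence).} Let $u\in X_\psi^p$ and $v=\Psi_p\circ(u')^a$. We need $u_k\in W^{2,p}$ with $u_k\to u$ in $L^1$, $\|u_k'\|_{L^1}\to|Du|((a,b))$, and $\|(\Psi_p\circ u_k')'\|_{L^p}\to\|v'\|_{L^p}$. It suffices to reach the energy up to $\varepsilon$ and then use a diagonal argument.

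The first step is to truncate: replace $v$ by $w_\delta:=\min\{\max\{v,\delta\},M-\delta\}$, which lies in $W^{1,p}$ with $\|w_\delta'\|_p\le\|v'\|_p$. Then $\Psi_p^{-1}(w_\delta)$ is bounded, and $\Psi_p^{-1}(w_\delta)\to(u')^a$ in $L^1$ by monotone convergence. This handles the absolutely continuous part.

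The second step inserts the singular mass. The open set $U^+_\delta:=\{v>M-\delta\}$ contains $Z^+$, which carries $(D^su)^+$. On $U^+_\delta$ we mollify $(D^su)^+$ at a scale much smaller than the components of $U^+_\delta$, obtaining a smooth nonnegative $g_\delta$, and we set $u_\delta'':=$ the derivative of $\Psi_p^{-1}(w_\delta)+g_\delta$ (similarly with a negative $g_\delta$ on $U^-_\delta$). Integrating and fixing constants so that $u_\delta(a)\to u(a^+)$ gives $u_\delta\to u$ in $L^1$ and total variation converging to $|Du|$, because the positive and negative parts are separated.

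\emph{Main obstacle.} The hard step is controlling $\bigl(\Psi_p\circ(\Psi_p^{-1}(w_\delta)+g_\delta)\bigr)'=\psi^{1/p}(\cdot)\,(\ldots)'$ in $L^p$, since $g_\delta'$ is huge. The plan is to exploit that $\psi^{1/p}(t)$ is integrable at $\pm\infty$. Rather than adding $g_\delta$ directly, prescribe on each component of $U^+_\delta$ a new profile $\tilde w$ with values in $[M-\delta,M)$, with small $W^{1,p}$-oscillation, whose image $\Psi_p^{-1}(\tilde w)$ has the prescribed integral $\int(u')^a+(D^su)^+$ over that component. Pushing $\tilde w$ closer to $M$ costs only $O(\delta)$ in height, and hence, over intervals of fixed length, an arbitrarily small $W^{1,p}$ cost, while producing arbitrarily large mass. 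Checking that the gluing to $w_\delta$ at the boundary of $U^+_\delta$ costs $o(1)$ in $L^p$, uniformly over the countably many components, is where I expect the real work. One should first treat finitely many large components and absorb the remainder using that $|Du|$ of the rest is small.
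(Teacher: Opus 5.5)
The paper does not prove this statement; it imports it from \cite[Theorem~3.4]{maso2009higher}, so your proposal has to stand on its own. Your lower-bound half does. The uniform convergence $v_k\to v$, the identification $(u')^a=\Psi_p^{-1}(v)$ on $\{0<v<M\}$, and the upper bound on $u_k'$ near compact sets avoiding $Z^+$ together give $u\in X^p_\psi$ and the liminf inequality. Two small repairs are needed:
\begin{itemize}
\item The sentence ``the limit of $u_k'$ is $+\infty$, so $(u')^a$ is infinite there'' does not follow from weak-$*$ convergence. Argue instead that $u_k'\ge\Psi_p^{-1}(M-\eta)$ on $\{v=M\}$ for large $k$, hence $\mathcal L^1(\{v=M\})\,\Psi_p^{-1}(M-\eta)\le\sup_k\|u_k'\|_{L^1}$ for every $\eta>0$.
\item The convergence $\Psi_p^{-1}(w_\delta)\to(u')^a$ in $L^1$ follows by dominated, not monotone, convergence.
\end{itemize}

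The genuine gap is the recovery sequence, at exactly the step you defer, and the reason you give for it to work does not apply. The components $J$ of $U^+_\delta=\{v>M-\delta\}$ do not have fixed length. Since $U^+_\delta$ decreases to the null set $\{v=M\}$, we have $\mathcal L^1(U^+_\delta)\to0$. A profile climbing by up to $\delta$ over a length comparable to $|J|$ costs about $\delta^p|J|^{1-p}$, and nothing in your sketch makes this small.

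The missing idea is to compare this cost with the energy that $v$ itself already spends on $J$. Suppose $\sup_J v=M$; only such components can carry singular mass, and the others may keep $v$ or be truncated. Since $v=M-\delta$ at an endpoint of $J$, we get $\delta\le|J|^{1-1/p}\|v'\|_{L^p(J)}$, that is, $\delta^p|J|^{1-p}\le\int_J|v'|^p\diff x$.

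Now take the trapezoid $\tilde w_J$ that rises linearly from $M-\delta$ to a plateau $M-\epsilon_J$ on the outer quarters of $J$.
\begin{itemize}
\item Its cost is at most $2\cdot 4^{p-1}\int_J|v'|^p\diff x$.
\item Its mass $\int_J\Psi_p^{-1}(\tilde w_J)\diff x$ is continuous in $\epsilon_J$. At $\epsilon_J=\delta$ it equals $\Psi_p^{-1}(M-\delta)|J|\le\int_J(u')^a\diff x$, and it tends to $+\infty$ as $\epsilon_J\to0$. By the intermediate value theorem it can be made exactly $Du(J)$.
\end{itemize}
Do this on finitely many components carrying all but $\eta$ of $|Du|(U^\pm_\delta)$; finitely many keeps $u_\delta'$ bounded. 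Truncate elsewhere. This gives
\[
\int|(\Psi_p\circ u_\delta')'|^p\le\int|v'|^p+C_p\int_{U^\pm_\delta}|v'|^p ,
\]
and the last term tends to $0$. The $L^1$-convergence and $\int|u_\delta'|\le|Du|((a,b))$ then follow from the componentwise mass matching and $\mathcal L^1(U^\pm_\delta)\to0$.

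Alternatively, your ``fixed length'' heuristic becomes correct if you decouple two scales. First fix finitely many components at a coarse level $\delta_0$, chosen so that $\mathcal L^1(U^\pm_{\delta_0})$ is small; this controls the $L^1$ error. Then send the truncation level $\delta\to0$ and add bumps of height $<\delta$ inside these fixed intervals; their $L^p$-cost is $O(\delta)$. Without one of these two estimates, the inequality $\overline{\mathcal F}_p(u)\le|Du|((a,b))+\int|v'|^p$ remains unproved.
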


As in \cite[Remark 3.2]{maso2009higher}, let us discuss some properties of functions $u\in X_{\psi}^{p}((a,b))$. For every jump point $x_{0}$ with $u_+(x_{0})-u_-(x_{0})>0$ it holds $\lim_{x \rightarrow x_{0}}(u')^{a}(x)=+\infty$, and for every jump point $x_{0}$ with $u_+(x_{0})-u_-(x_{0})<0$ we have $\lim_{x \rightarrow x_{0}}(u')^{a}(x)=-\infty$. Furthermore, if $Du$ has a non-vanishing Cantor or jump part, then $(u')^{a}$ cannot be bounded. In particular, piecewise constant functions with jumps and the Cantor function are excluded from $X_{\psi}^{p}((a,b))$, since for them $Z^{\pm}\left[(u')^{a}\right]=\varnothing$. The same applies to polygons. For the case $p=2$, this means that the Cantor function (for its definition, see the proof of Thm.~\ref{thm:finit_will} and Fig.~\ref{fig:cantor-delta-02}) and piecewise smooth graphs with corners have infinite relaxed $\mathcal F_2$ energy.
 

In view of the Cantor function example, one might expect that all functions with non-trivial Cantor part are excluded from $X_{\psi}^{p}((a,b))$. Surprisingly, in \cite[Remark 3.2 (iv)]{maso2009higher} Dal Maso, Fonseca, Leoni and Morini constructed functions with non-trivial Cantor part in $X_{\psi}^{p}((a,b))$ for exponents $p\in(1,2)$ sufficiently close to $1$, provided that $\psi$ satisfies 
\begin{align*}
\psi(t) \leq c\, t^{-\alpha} \quad \text{ for all } t \geq 1
\end{align*}
and some $c>0$, $\alpha>1$. The function $\psi$ corresponding to the Willmore term in $\mathcal{F}_2$, defined in \eqref{eq:f2will}, satisfies this condition with $\alpha=5$, as well as condition \eqref{eq:psibed}. In this paper, we extend this result to $p=2$, hence to the relaxed Willmore energy $\overline{\W}$, by constructing a function $u$ with $\overline{\W}(u)<+\infty$ and non-vanishing Cantor part. This is indeed a surprising result, since $p=2$ is not close to $1$ and is therefore not covered by \cite[Remark 3.2 (iv)]{maso2009higher}.

\section{Finiteness of the Relaxed Willmore Energy Does Not  Imply \texorpdfstring{$SBV$}{SBV}}
By the relaxation theorem of Dal Maso-Fonseca-Leoni-Morini (see Subsection~\ref{subsec:maso}), a singular part of $Du$ is admissible only if it is concentrated on the blow-up set of the absolutely continuous slope $(u')^a$. The pure Cantor function fails this criterion, since its absolutely continuous derivative vanishes. We therefore superpose a generalized Cantor function $f_\delta$ with a continuous monotone function $U$ whose derivative tends to $+\infty$ precisely on the Cantor set, while $\Psi_2\circ(u')^a$ remains in $W^{1,2}((0,1))$. For $u:=U+f_\delta$ this yields $\overline\W(u)\le \W^a(u)<\infty$ despite $D^c u\neq0$. Two remarks complement the theorem: the example can be rescaled so that its relaxed Willmore energy becomes arbitrarily small (Corollary~\ref{cor:no-positive-gap-pure-Willmore}), and the Cantor part is a projection effect that disappears after rotating the graph (Remark~\ref{rem:rot}).

\begin{thm}\label{thm:finit_will}
There exists a function $u\in BV((0,1))$ such that $\overline{\W}(u)<\infty$ and $|D^c u|((0,1))>0$. In particular, $u\notin SBV((0,1))$ 
with 
\[\overline{\W}(u)\le \W^a(u)=
\int_0^1
\left|
\frac{\diff\ }{\diff x}\Psi_2((u')^a(x))
\right|^2 \diff x.\]
\end{thm}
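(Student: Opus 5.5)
Set $u:=U+f_\delta$. Here $f_\delta$ is a generalized (fat or thin) Cantor function built on a Cantor set $C_\delta\subset[0,1]$ with $\Leb^1(C_\delta)=0$, so that $Df_\delta=D^cf_\delta$ is a nonzero diffuse singular measure concentrated on $C_\delta$. The function $U$ is continuous and nondecreasing, with $U'\in L^1((0,1))$ and $U'(x)\to+\infty$ as $x\to C_\delta$.

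Concretely, I use the middle-interval construction. At stage $k$ one removes $2^{k-1}$ open intervals $I_{k,i}$ of length $\ell_k=\delta\,\theta^{k-1}$ (with $2\theta<1$ chosen so the remaining set has measure zero and the lengths sum to $1$). On each removed interval $I=(c-\ell/2,c+\ell/2)$ I define $w:=U'$ by a scaled profile
\[ w(x)=\ell^{-\gamma}\,\phi\bigl((x-c)/\ell\bigr), \]
where $\phi$ is even on $(-\tfrac12,\tfrac12)$ and behaves like $\operatorname{dist}(y,\pm\tfrac12)^{-\beta}$ near the endpoints, with $0<\beta<1$. I set $w=+\infty$ on $C_\delta$.

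Then $u\in BV\cap C^0$ with $(u')^a=w$ and $D^su=D^cf_\delta\ge0$. This singular part is positive and concentrated on $C_\delta\subset Z^+[w]$, provided $w$ blows up at every point of $C_\delta$, including non-endpoints. By Theorem~\ref{thm:maso} with $p=2$, $u\in X^2_\psi$ and
\[ \overline{\mathcal F}_2(u)=|Du|((0,1))+\int_0^1|(\Psi_2\circ w)'|^2\diff x<\infty. \]
Since $\W\le\mathcal F_2$ for $W^{2,2}$ functions, the relaxations satisfy $\overline\W\le\overline{\mathcal F}_2$, which gives finiteness. To obtain the sharper bound $\overline\W(u)\le\W^a(u)$, I take the recovery sequence $u_k$ from the proof of Theorem~\ref{thm:maso}, whose second-order parts converge to $\int|v'|^2$. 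Alternatively, I take an explicit approximation that truncates the construction at stage $k$ and smooths $\Psi_2\circ w$ in $W^{1,2}$, and I check that $\W(u_k)\to\W^a(u)$ while the total variation term converges separately.

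The conditions to verify are the following.

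\textbf{(i)} $w\in L^1$. This requires $\sum_k 2^{k-1}\ell_k^{1-\gamma}\|\phi\|_{L^1}<\infty$, which holds if $\beta<1$ and $2\theta^{1-\gamma}<1$.

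\textbf{(ii)} $\Psi_2\circ w\in W^{1,2}$. Since $\psi^{1/2}(t)\sim t^{-5/2}$, we have $M-\Psi_2(t)\sim\tfrac23t^{-3/2}$ as $t\to\infty$. Hence $v=\Psi_2\circ w$ equals $M$ on $C_\delta$ and, near an endpoint inside $I$, $M-v\approx \ell^{3\gamma/2}\operatorname{dist}^{3\beta/2}\ell^{-3\beta/2}$. Then $\int_I|v'|^2\lesssim \ell^{3\gamma-1}$, provided $3\beta/2-1>-\tfrac12$, i.e.\ $\beta>1/3$. Summing, $\sum_k2^k\ell_k^{3\gamma-1}<\infty$ requires $2\theta^{3\gamma-1}<1$.

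\textbf{(iii)} $w\to+\infty$ at every point of $C_\delta$. This follows from $\gamma>0$: the minimum of $w$ on an interval of length $\ell_k$ is at least $c\,\ell_k^{-\gamma}\to\infty$, and near any Cantor point all intervals are small.

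\textbf{(iv)} $v$ is continuous, hence in $W^{1,2}$ globally rather than only piecewise. This holds because $v\to M$ uniformly as $k\to\infty$, and the $W^{1,2}$ norms on the intervals sum. For this I have to argue that $v$ is absolutely continuous across the Cantor set. The key point is that $v$ is the uniform limit of the truncations $v_k$, which are bounded in $W^{1,2}$, and $v'=0$ a.e.\ on the null set $C_\delta$.

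The main obstacle is choosing the exponents so that (i) and (ii) hold simultaneously. This requires $\theta^{1-\gamma}<\tfrac12$ and $\theta^{3\gamma-1}<\tfrac12$ with $\gamma>0$, together with $\beta\in(1/3,1)$. Taking $\gamma=1/2$ gives $\theta^{1/2}<1/2$, i.e.\ $\theta<1/4$. So a sufficiently fast-shrinking Cantor set ($\ell_k\sim\delta\theta^{k}$ with $\theta<1/4$, still summing to $1$ via the stage-one length) works. The freedom in $\theta$ is exactly what fails for the standard $1/3$-Cantor set and what the earlier $p$-near-$1$ construction lacked.
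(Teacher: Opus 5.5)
Your construction is the paper's, in a different parametrization. The paper also takes $u=U+f_\delta$ on a null middle-interval Cantor set with ratio $\delta$ (your $\theta$), and on each gap $I_{jk}$ of generation $j$ it puts $w_\delta=2^{sj}\bigl(1+\Phi\circ\rho_{jk}\bigr)$, where $\Phi$ has $\operatorname{dist}^{-\beta}$ endpoint singularities. Your amplitude $\ell^{-\gamma}$ is, up to a constant factor, this $2^{sj}$ with $2^{s}=\theta^{-\gamma}$. Your conditions (i) and (ii) are exactly $\delta<2^{-s-1}$ and $\delta>2^{1-3s}$, $\beta>\frac13$ enters at the same point, and your choice $\gamma=\frac12$, $\theta<\frac14$ corresponds to $s>1$.

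For the global $W^{1,2}$ property, the paper shows $\int_{I_{jk}}G_\delta\,\diff x=0$ on every gap and concludes $\Psi_2\circ w_\delta=M+\int_0^x G_\delta$. Your truncation-plus-weak-compactness argument also works. It needs the truncations set equal to $M$ off the first $k$ generations, so that they are genuinely in $W^{1,2}$, and uniform convergence taken from $w\ge c\,\ell_j^{-\gamma}$ on generation $j$. Drop the phrase ``continuous, hence in $W^{1,2}$'': the Cantor function shows that continuity plus summable piecewise $W^{1,2}$ bounds is not enough.

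The one step you do not actually justify is the sharp bound $\overline{\W}(u)\le\W^a(u)$. You appeal to how the recovery sequence of the relaxation theorem is built, or to an explicit approximation you do not carry out, and neither is needed. Take any recovery sequence $q_n$ with $\mathcal F_2(q_n)\to|Du|((0,1))+\int_0^1|v'|^2\,\diff x$. Lower semicontinuity of the total variation under $L^1$-convergence gives $|Du|((0,1))\le\liminf_n\int_0^1|q_n'|\,\diff x$, hence $\limsup_n\W(q_n)\le\int_0^1|v'|^2\,\diff x$. This is exactly the paper's Step 7.

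Finally, your closing remark misplaces the mechanism. The paper takes the generalized Cantor set, ratio included, from the earlier construction. What it singles out as decisive is the level-dependent amplitude: with $\gamma=0$ your own condition (ii) would read $2\theta^{-1}<1$, which no $\theta$ satisfies.
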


            \begin{proof}
            By the characterization in \cite{maso2009higher}, the Cantor function $f_\delta:[0,1]\to[0,1]$, introduced in Step \cbm 2, cannot serve as our example. Indeed, its positive singular derivative is concentrated on a Cantor set $\mathbb D_\delta$, defined in \eqref{eq:defcantorset}, whereas $Z^+[(f_\delta')^a]=\varnothing$ and therefore  $\overline {\mathcal F}_2(f_\delta)=+\infty$.

To overcome this difficulty, we add a continuous function $U:[0,1]\to\mathbb R$, constructed in Steps \cbm 3-\cbm 6, and define $u:=f_\delta+U.$ The function $U$ is chosen in such a way that $\mathbb D_\delta\subset Z^+[(u')^a]$,  therefore obeying $(U')^a=+\infty$ on the Cantor set, while at the same time its one-dimensional Willmore energy remains finite. Thus, $f_\delta$ generates the Cantor part of the derivative, whereas $U$ ensures that this singular part is compatible with finite relaxed Willmore energy.

To prove that $\overline{\W}(u)<\infty$, we construct in Step \cbm 7 a sequence $(u_k)_{k\in\mathbb N}\subset W^{2,2}((0,1))$ by using Thm.~\ref{thm:maso}.
            
\medskip

\noindent\cb{1} We first recall the auxiliary function
\begin{align*}
\Psi_2(t):=\int_{-\infty}^t \frac{1}{(1+\tau^2)^{5/4}} \diff \tau,
\qquad
M:=\Psi_2(\infty)=\int_{\mathbb R}\frac{1}{(1+\tau^2)^{5/4}} \diff \tau
=\frac{\sqrt{\pi}\,\Gamma\!\left(\frac34\right)}{\Gamma\!\left(\frac54\right)}.
\end{align*}
where $\Gamma$ denotes the Gamma function. Following \cite[Lemma 1]{deckelnick2007boundary}, $\Psi_2$ extends to a continuous,
strictly increasing map
\[
\Psi_2\colon \overline{\mathbb R}\to[0,M],
\qquad
\Psi_2^{-1}:[0,M]\to\overline{\mathbb R}.
\]
 We also set the function $\psi\colon \R \to (0,\infty)$ by
\[
\psi(\tau):=\frac{1}{(1+\tau^2)^{5/2}}, \qquad \tau\in\mathbb R.
\]
For $u\in W^{2,2}((0,1))$,  we recall
the one-dimensional Willmore functional 
\begin{align*}
\W(u)
&:= \int_0^1 \psi(u')\,|u''|^2 \diff x
 = \int_0^1 \frac{|u''(x)|^2}{(1+u'(x)^2)^{5/2}} \diff x \\
&= \int_0^1 \kappa_u(x)^2\sqrt{1+u'(x)^2} \diff x
 = \int_{\operatorname{graph}[u]} \kappa_u^2 \diff s \\
&= \int_0^1 \left|\frac{\diff}{\diff x}\Psi_2(u'(x))\right|^2 \diff x .
\end{align*}
Here $\kappa_u=u''(1+u'^2)^{-3/2}$ denotes the curvature of the graph of $u$.

\medskip
\noindent\cb{2} Following \cite[pp.~2356 ff.]{maso2009higher}, we now construct the generalized Cantor set $\mathbb D_\delta$ and the associated Cantor function $f_\delta$.
Let 
\begin{align}
    \delta\in\left(0,\frac12\right)
    \label{eq:1dcond1}\tag{C1}
\end{align}
with $\delta$  be suitably chosen below. Its precise value will be chosen later.
Starting from the interval $[0,1]$, we first remove the open middle interval of length $1-2\delta$, namely
\[
I_{11}:=(\delta,1-\delta),
\quad \text{ therefore }
\quad
[0,1]\setminus I_{11}=[0,\delta]\cup[1-\delta,1].
\]

In the second step, we remove from each of these two intervals its open middle subinterval of length $\delta(1-2\delta)$, namely
\[
 	I_{21}  := \big(\delta^2,\ \delta(1-\delta)\big), \quad 
            	I_{22} := \big((1-\delta)+\delta^2, \ (1-\delta) + \delta (1-\delta)\big)
            	= (1-\delta+\delta^2,\ 1-\delta^2).
\]
 We repeat the  procedure on $[0,1]\setminus( I_{11}\cup I_{21}\cup I_{22})$ and remove four open intervals $I_{31}, I_{32}, I_{33}, I_{34}$. Iterating this construction, after $j-1$ steps there remain $2^{j-1}$ closed intervals, each of length $\delta^{j-1}$. In step $j$, we remove from the middle of each of these intervals an open subinterval of length $\delta^{j-1}(1-2\delta)$. We call these intervals
\[
I_{jk},\qquad k=1,\dots,2^{j-1},
\]
ordered from left to right. In particular,
$
I_{j1}=\big(\delta^j,\delta^{j-1}(1-\delta)\big).
$

We thus define the generalized Cantor set by
\begin{align}
    \mathbb D_\delta
    :=\bigcap_{\ell=1}^\infty C_\ell
  \quad  \text{with}
\quad    C_\ell:= [0,1]\setminus \bigcup_{j=1}^\ell \bigcup_{k=1}^{2^{j-1}} I_{jk}, \ell\in \N.
  \label{eq:defcantorset}
\end{align}
Then $C_\ell$ consists of $2^\ell$ intervals, each of length $\delta^\ell$. Hence
\[
\mathcal L^1(C_\ell)=2^\ell\delta^\ell=(2\delta)^\ell \xrightarrow[\ell\to\infty]{} 0,
\]
and therefore $\mathbb D_\delta=\bigcap_{\ell=1}^\infty C_\ell$ has Lebesgue measure zero.

We now define the approximating densities and functions by
\[
g_\ell:=\frac{1}{(2\delta)^\ell}\,\mathbbm 1_{C_\ell}
=
\frac{1}{(2\delta)^\ell}
\left(
1-\sum_{j=1}^\ell\sum_{k=1}^{2^{j-1}}\mathbbm 1_{I_{jk}}
\right),
\qquad
f_\ell(x):=\int_0^x g_\ell(\xi) \diff \xi.
\]

By construction, $f_\ell$ is affine on each connected component of $C_\ell$ and constant on every interval $I_{jk}$ with $1\le j\le \ell$. More precisely, if $I_{jk}$ denotes the $k$-th removed interval at level $j$, counted from left to right, then
\begin{align}
    f_\ell\equiv \frac{2k-1}{2^j}
\qquad\text{on } I_{jk}. \label{eq:f_ell_formula}
\end{align}

On each of the $2^\ell$ components of $C_\ell$, the function $f_\ell$ is linear with slope $(2\delta)^{-\ell}$.
Since $g_\ell\ge 0$, each $f_\ell$ is nondecreasing. Moreover,
\[
f_\ell(1)
=
\int_0^1 g_\ell \diff x
=
\frac{1}{(2\delta)^\ell}\,\mathcal L^1(C_\ell)
=
\frac{1}{(2\delta)^\ell}(2\delta)^\ell
=
1.
\]
Together with $f_\ell(0)=0$, this shows that $f_\ell:[0,1]\to[0,1]$.

By Lemma~\ref{lem:Cantor}, the sequence $(f_\ell)$ converges uniformly to a continuous function $f_\delta:[0,1]\to[0,1]$  as $\ell\to\infty$, namely the generalized Cantor function associated with $\mathbb D_\delta$. Moreover,
\[
(f_\delta')^a = 0 \ \text{ a.e. in }(0,1), \text{ and }\ Df_\delta=D^sf_\delta \text{ as Radon measures,} 
\]
whereas the Cantor part $D^cf_\delta$ is a nonnegative measure supported on $\mathbb D_\delta$. In particular, all variation of $f_\delta$ is concentrated on $\mathbb D_\delta$. The function $f_\delta$ is nondecreasing, satisfies
\[
f_\delta(0)=0,\qquad f_\delta(1)=1,
\]
and is constant on every connected component of $[0,1]\setminus \mathbb D_\delta$.


\begin{figure}[H]
                \centering
\includegraphics[]{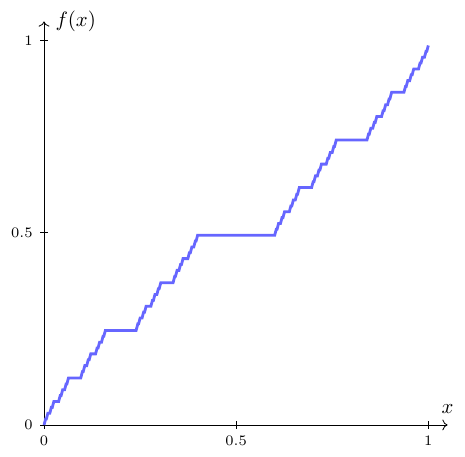}
                \caption{$f_\delta$ for $\delta=0.4$}
                \label{fig:cantor-delta-02}
            \end{figure}

\medskip 
\noindent\cb{3} We now construct an auxiliary derivative $w=w_\delta$ for the function $U$ that will later be added to $f_\delta$. The Cantor part of the derivative will still come entirely from $f_\delta$. The role of $U$ is to modify only the absolutely continuous part in such a way that the resulting function has finite relaxed Willmore energy. For the moment, we therefore define only the derivative candidate
\[
w_\delta:[0,1]\to[0,\infty],
\]
which we require to be continuous, to satisfy
\[
w_\delta(x)=+\infty \quad \Longleftrightarrow \quad x\in\mathbb D_\delta,
\]
and to obey
\[
\Psi_2\circ w_\delta\in W^{1,2}((0,1)).
\]
Once $U$ is defined as a primitive of $w_\delta$, this condition will yield finite curvature energy for $U$. The specific singular profile $\Phi$ used below and differs from the construction in \cite{maso2009higher}.
We fix
\begin{align}
    \beta\in\left(\frac13,\frac{1}{2}\right),
    \label{eq:1dcond2}\tag{C2}
\end{align}
where the upper bound $\beta<\frac12$ has two roles. First, it implies $\beta<1$ and therefore guarantees the integrability of the singularities of the profile at the endpoints $0$ and $1$ of the reference interval. Second, it is responsible for the unboundedness of the curvature near the endpoints of the removed intervals, used in Definition \eqref{eq:def_yj}. For comparison, see also Remark~\ref{rem:C2-variant}. The lower bound $\beta>\frac13$ will be needed later in the local Willmore estimates on the intervals $I_{jk}$ in Step~\cbm 5.

We define
\[
\Phi:[0,1]\to[0,\infty],\qquad
\Phi(x):=C_\beta\left(\frac{1}{x^\beta(1-x)^\beta}-4^\beta\right),
\]
where the normalizing constant $C_\beta>0$ is chosen so that
\[
\int_0^1 \Phi(x) \diff x=1.
\]
Since $x(1-x)\le \frac14$ on $[0,1]$, we have
$
{x^\beta(1-x)^\beta}\le 4^{-\beta},
$
with equality only at $x=\frac12$. Hence $\Phi\ge 0$ and $\Phi(\frac12)=0$, which will later imply monotonicity of its primitive.  Moreover, because $\beta<1$, the singularities at the endpoints are integrable, and
\begin{align*}
    C_\beta^{-1}
    &= \int_0^1 \left(\frac{1}{x^\beta(1-x)^\beta}-4^\beta\right) \diff x \\
    &= \Beta(1-\beta,1-\beta)-4^\beta
     = \frac{\Gamma(1-\beta)^2}{\Gamma(2-2\beta)}-4^\beta>0,
\end{align*}
where $\Beta$ denotes Euler's Beta function \cite[(5.12.1)]{NIST:DLMF}. Up to a horizontal rescaling and an additive constant, $\Phi$ will serve as the basic profile for the absolutely continuous part of the derivative. Its primitive is given for $x\in [0,1]$ by
\begin{align*}
    C_\beta^{-1}\int_0^x \Phi(s) \diff s
    = \Beta_x(1-\beta,1-\beta)-4^\beta x
    = \frac{x^{1-\beta}}{1-\beta}\,{}_2F_1(1-\beta,\beta;2-\beta;x)-4^\beta x,
\end{align*}
where $\Beta_x$ is the incomplete Beta function and ${}_2F_1$ denotes the Gauss hypergeometric function \cite[(8.17.7)]{NIST:DLMF}.

\begin{figure}[H]
\centering
\includegraphics{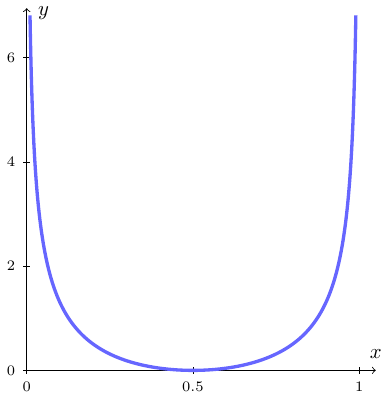}
\hspace{2em}
\includegraphics{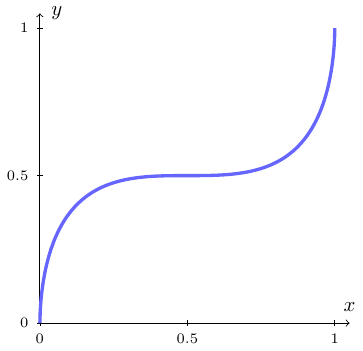}
\caption{Left: $\Phi(x)$ for $\beta=\frac25$, right: its primitive $x\mapsto \int_0^x \Phi(s)\diff s$ for $\beta=\frac25$.}
\end{figure}

Furthermore, $\Phi$ is strictly convex on $(0,1)$. Indeed,
            \begin{align*}
            	\frac{\Phi'(x)}{C_\beta} &=  \frac{-\beta }{x^{\beta+1}(1-x)^\beta}+\frac{\beta }{x^{\beta}(1-x)^{\beta+1}} 
            	=  \frac{\beta\big[-(1-x)+x\big] }{x^{\beta+1}(1-x)^{\beta+1}}
            		=  \frac{\beta\big[2x-1\big] }{x^{\beta+1}(1-x)^{\beta+1}}, \\
            		 \frac{\Phi''(x)}{C_\beta} &= \beta\frac{2}{x^{\beta+1}(1-x)^{\beta+1}}  + \beta(\beta+1) \frac{(2x-1)^2 }{x^{\beta+2}(1-x)^{\beta+2}}  = \beta\frac{2x(1-x) +(1-2x)^2 +\beta (1-2x)^2}{x^{\beta+2}(1-x)^{\beta+2}} \\
            		 &= \beta\frac{(1-x)^2 +x^2 +\beta (1-2x)^2}{x^{\beta+2}(1-x)^{\beta+2}} > 0.
            \end{align*}
Hence $\Phi$ attains its unique minimum at $x=\frac12$, where $\Phi(\frac12)=0$.

Next choose  
\begin{align}
   s>1 \quad \Longleftrightarrow\quad  2^{1-3 s}<2^{-s-1},
    \label{eq:choice_s}\tag{C3}
\end{align}
so that the interval $\big(2^{1-3 s},\,2^{-s-1}\big)$ is nonempty so that later we can find a number $\delta$ between $2^{1-3s}$ and $2^{-s-1}$.

Let $a_{jk}$ denote the midpoint of the interval $I_{jk}$, so that
\[
I_{jk}
=
\left(
a_{jk}-\frac12\delta^{j-1}(1-2\delta),\,
a_{jk}+\frac12\delta^{j-1}(1-2\delta)
\right).
\]
On each interval $I_{jk}$ we place a vertically rescaled copy of $\Phi$, shifted upward by $2^{sj}$. More precisely, for $x\in I_{jk}$ we set
\begin{align*}
    \Phi_{jk}(x) := 2^{sj} + 2^{sj}\Phi\left(\frac{x-a_{jk}}{(1-2\delta)\delta^{j-1}}+\frac12 \right), 
\end{align*}
and we define
\begin{align*}
    w_\delta(x):= \begin{cases}
        \Phi_{jk}(x), & \text{if } x\in I_{jk}\text{ for some } j\ge1,\ k=1,\dots,2^{j-1},\\[0.3em]
        +\infty, & \text{if } x\in \mathbb D_\delta.
    \end{cases}
\end{align*}
The term $2^{sj}$ is an additional level-dependent background slope. It will later be crucial in the curvature estimates, since it enters the denominator of the relevant expressions.

By construction, $w_\delta$ is continuous as a function with values in $[0,\infty]$. On each interval $I_{jk}$ this is immediate. At the endpoints of $I_{jk}$ it follows from the fact that $\Phi(t)\to+\infty$ as $t\searrow0$ or $t\nearrow1$. Finally, if $x\in\mathbb D_\delta$ is not an endpoint of any removed interval, then every sequence approaching $x$ from $[0,1]\setminus\mathbb D_\delta$ must eventually lie in intervals $I_{jk}$ with $j\to\infty$, and hence
$
w_\delta\ge 2^{sj}\to+\infty.
$

Since $\mathcal L^1(\mathbb D_\delta)=0$, only the intervals $I_{jk}$ contribute to the integral. Using $|I_{jk}|=\delta^{j-1}(1-2\delta)$ and $\int_0^1 \Phi(x) \diff x=1$, we obtain
\begin{align*}
    \int_{I_{jk}} w_\delta(x) \diff x
    &
    = \int_{I_{jk}} \Phi_{jk}(x) \diff x 
    = 2^{sj}(1-2\delta)\delta^{j-1}
      +2^{sj}(1-2\delta)\delta^{j-1}\int_0^1 \Phi(t) \diff t \\
    &= 2(1-2\delta)\delta^{j-1}2^{sj}.
\\
 \Rightarrow \quad    \int_0^1 w_\delta(x) \diff x
    &= \sum_{j=1}^\infty \sum_{k=1}^{2^{j-1}} \int_{I_{jk}} w_\delta(x) \diff x 
    = \sum_{j=1}^\infty 2^{j-1}2(1-2\delta)\delta^{j-1}2^{sj} \\
    &= \frac{1-2\delta}{\delta}\sum_{j=1}^\infty (2\delta)^j2^{sj}.
\end{align*}
If, in addition, we assume
\begin{align}
    \delta<2^{-s-1}
    \quad\Longleftrightarrow\quad
    2^{s+1}\delta<1,  \label{eq:condition}\tag{C4}
\end{align}
then the geometric series converges, and
\begin{align*}
    \int_0^1 w_\delta(x) \diff x
    &= \frac{1-2\delta}{\delta}
       \left(
           \frac{2^{s+1}\delta}{1-2^{s+1}\delta}
       \right) 
    = 2^{s+1}\frac{1-2\delta}{1-2^{s+1}\delta}<\infty.
\end{align*}
Hence $w_\delta$ is integrable on $(0,1)$. In particular, it represents an element of $L^1((0,1))$, and it takes the value $+\infty$ on the null set $\mathbb D_\delta$.            
            
\medskip            
\noindent\cb{4} 
Since $w_\delta:[0,1]\to[0,\infty]$ is continuous and $\Psi_2:[0,\infty]\to[0,M]$ is continuous, we have
$
\Psi_2\circ w_\delta\in C^0([0,1]).
$
In particular, $0\le\Psi_2\circ w_\delta\le M$, and therefore
\[
\Psi_2\circ w_\delta\in L^\infty((0,1))\subset L^2((0,1)).
\]
For $x\in I_{jk}$, it holds
$
w_\delta(x)=\Phi_{jk}(x),
$
and hence $\Psi_2\circ w_\delta$ is of class $C^1$ on each interval $I_{jk}$, with
\begin{align}
\begin{aligned}
            	(\Psi_2\circ w_\delta)'(x) &= (\Psi_2\circ \Phi_{jk})'(x)=\Psi_2'\big(\Phi_{jk}(x)\big) \cdot \Phi'_{jk}(x)\\
            	 &= \frac{2^{sj}}{(1+\Phi_{jk}(x) ^2)^{5/4}}\cdot \Phi' \left(  \frac{x-a_{jk}}{(1-2\delta)\delta^{j-1}}  + \frac{1}{2}\right) \cdot \frac{1}{(1-2\delta)\delta^{j-1}}.
            \end{aligned}
            \label{eq:psi_abl}
            \end{align}

We now define a candidate weak derivative for $\Psi_2\circ w_\delta$ on all of $(0,1)$ by
\[
G_\delta(x):=
\begin{cases}
(\Psi_2\circ w_\delta)'(x), & x\in [0,1]\setminus\mathbb D_\delta,\\
0, & x\in \mathbb D_\delta.
\end{cases}
\]
In Step \cbm 5 we will prove that $G_\delta\in L^2((0,1))$, and hence also $G_\delta\in L^1((0,1))$.
Assuming this for the moment, we now show that $\Psi_2\circ w_\delta$ is a pointwise absolutely continuous function and $G_\delta$ is indeed the weak derivative of $\Psi_2\circ w_\delta$, i.e.
\begin{align}
\Psi_2\circ w_\delta(x)
= M+\int_0^x G_\delta(\xi) \diff \xi
\qquad\text{for all }x\in[0,1],
\label{eq:hauptsatz}
\end{align}
where $M=\Psi_2(\infty)$. Note that
$
\Psi_2\circ w_\delta(0)=\Psi_2(w_\delta(0))=\Psi_2(\infty)=M,
$
since $0\in\mathbb D_\delta$.

Let $I_{jk}=(\alpha_{jk},\beta_{jk})$. Since $\alpha_{jk},\beta_{jk}\in\mathbb D_\delta$, we have
$
\Psi_2\circ w_\delta(\alpha_{jk})=\Psi_2\circ w_\delta(\beta_{jk})=M.
$
Therefore,
\begin{align*}
\int_{I_{jk}} G_\delta(\xi) \diff \xi
&=
\lim_{\varepsilon\searrow 0}
\int_{\alpha_{jk}+\varepsilon}^{\beta_{jk}-\varepsilon} (\Psi_2\circ w_\delta)'(\xi) \diff \xi \\
&=
\lim_{\varepsilon\searrow 0}
\bigl(\Psi_2\circ w_\delta(\beta_{jk}-\varepsilon)-\Psi_2\circ w_\delta(\alpha_{jk}+\varepsilon)\bigr)
= M-M=0.
\end{align*}

Now let $x\in[0,1]$.
If $x\in\mathbb D_\delta$, then $\Psi_2\circ w_\delta(x)=M$, and $(0,x)\setminus\mathbb D_\delta$ is the disjoint union of all removed intervals contained in $(0,x)$. Hence, since $G_\delta(x)=0$ for all $x\in \mathbb D_\delta$, it follows 
\begin{align*}
M+\int_0^x G_\delta(\xi) \diff \xi
=
M+\sum_{I_{jk}\subset (0,x)}
\int_{I_{jk}} G_\delta(\xi) \diff \xi
=M=\Psi_2\circ w_\delta(x), \quad \checkmark
\end{align*}
which proves \eqref{eq:hauptsatz} in this case.

Now suppose that $x\notin \mathbb D_\delta$. Then there exists a unique interval
$
I_{j_0k_0}=(\alpha_{j_0k_0},\beta_{j_0k_0})
$
such that $x\in I_{j_0k_0}$. Since $\alpha_{j_0k_0}\in\mathbb D_\delta$, the previous case gives
\[
\int_0^{\alpha_{j_0k_0}} G_\delta(\xi) \diff \xi =0.
\]
This yields \eqref{eq:hauptsatz} also for $x\notin\mathbb D_\delta$ since
\begin{align*}
 \int_0^x  G_\delta(\xi) \diff \xi +M  
                &=  \int_0^{\alpha_{j_0k_0}}  G_\delta(\xi) \diff \xi +
                \int_{\alpha_{j_0k_0}}^x G_\delta(\xi) \diff \xi +M
                \\
&= 0+
\lim_{\varepsilon\searrow 0}
\int_{\alpha_{j_0k_0}+\varepsilon}^x (\Psi_2\circ w_\delta)'(\xi) \diff \xi +M\\
&=
\lim_{\varepsilon\searrow 0}
\bigl(\Psi_2\circ w_\delta(x)-\Psi_2\circ w_\delta(\alpha_{j_0k_0}+\varepsilon)\bigr)+M
=
\Psi_2\circ w_\delta(x). \quad \checkmark
\end{align*}
Hence, once Step \cbm 5 establishes $G_\delta\in L^2((0,1))$, we conclude that
\[
\Psi_2\circ w_\delta\in W^{1,2}((0,1)),
\qquad
(\Psi_2\circ w_\delta)'=G_\delta
\quad\text{a.e. in }(0,1).
\]

            \medskip 
    \noindent\cb{5}
We now estimate the local curvature contribution on each removed interval $I_{jk}$.
Let
\[
E_{jk}:=\int_{I_{jk}} \big|(\Psi_2\circ w_\delta)'(x)\big|^2 \diff x .
\]
In what follows, we write $A\preceq B$ if $A\le C B$ for some constant $C>0$ independent of $j$ and $k$.  It is important to observe that the following estimates are uniform with respect to $k$ and $j$. Using the formula \eqref{eq:psi_abl} from Step \cbm 4 and a change of variables we obtain
\begin{align*}
   	E_{jk} & = \frac{\delta^2}{(1-2\delta)^2 \delta^{2j}} \int_{I_{jk}} \frac{\left[2^{sj}\Phi' \left( \frac{x-a_{jk}}{(1-2\delta)\delta^{j-1}}+\frac{1}{2}\right)\right]^2}{\left( 1+ \left[2^{sj}+2^{sj} \Phi\left( \frac{x-a_{jk}}{(1-2\delta)\delta^{j-1}}+\frac{1}{2}\right)\right]^2\right)^{5/2}} \diff x\\
   	&\preceq \frac{\delta}{(1-2\delta) \delta^{j}} \int_{0}^1 \frac{\left[2^{sj}\Phi' \left(y\right)\right]^2}{\left( 1+ \left[2^{sj}+ 2^{sj}\Phi\left( y\right)\right]^2\right)^{5/2}} \diff y
   	\underset{\eqref{eq:condition}}{\overset{2\delta<2^{-s}}\preceq} \delta ^{-j} \int_0^1 \frac{\left[2^{sj}\Phi' \left(y\right)\right]^2}{ \left(2^{sj}+ 2^{sj} \Phi\left( y\right)\right)^{5}} \diff y 
            \end{align*}
 Consequently, since the integrand is invariant under the substitution, it follows that
\begin{align*}
   	E_{jk} 
   	&\preceq \delta^{-j} 2^{sj(2-5)}\int_0^1 \frac{ y^{-2(\beta+1)}(1-y)^{-2(\beta+1)}}{(1+ y^{-\beta}(1-y)^{-\beta})^5}\diff y
   	\preceq 2^{-3sj}\delta^{-j} \int_0^1 \frac{ y^{-2(\beta+1)}}{(1+ y^{-\beta})^5}\diff y \\
  &\preceq  2^{-3sj}\delta^{-j}\int_0^{1}
\frac{y^{-2(\beta+1)}}{y^{-5\beta}}\,\diff y
= 2^{-3sj} \delta^{-j}\int_0^{1} y^{-2+3\beta}\,\diff y .
            \end{align*}
Since $\beta>\frac13$, the exponent satisfies
\[
-2+3\beta>-1,
\]
and therefore the last integral is finite.
We conclude that
\[
E_{jk}\preceq \delta^{-j}2^{-3 sj}
= \bigl(\delta^{-1}2^{-3s}\bigr)^j .
\]
In particular, this estimate is uniform with respect to $k$.
Summing over all removed intervals, we obtain
 \begin{align*}
            \int_{[0,1]\setminus \mathbb D_\delta} \big[ (\Psi_2\circ w_\delta)'(x)\big]^2 \diff x 
            =	\sum_{j=1}^\infty \sum_{k=1}^{2^{j-1}} E_{jk} \preceq \sum_{j=1}^\infty \sum_{k=1}^{2^{j-1}}  \big(\delta^{-1} 2^{-3s}\big)^j 
            	\le \sum_{j=1}^\infty   \big(\delta^{-1} 2^{1-3s}\big)^j < \infty
            \end{align*}
This series converges provided
\begin{align}
\delta^{-1}2^{1-3 s}<1
\quad\Longleftrightarrow\quad
\delta>2^{1-3 s}.
\label{eq:condition2}\tag{C5}
\end{align}
Together with \eqref{eq:condition}, this means that we may choose
\[
2^{1-3 s}<\delta<2^{-s-1}.
\]
Since $s>1$, this interval is nonempty, so that such a choice of $\delta$ is an admissible choice. Thus $(\Psi_2\circ w_\delta)' \in L^2\bigl((0,1)\setminus\mathbb D_\delta\bigr)$, and since $\mathbb D_\delta$ has Lebesgue measure zero, Step \cbm 4 yields
\[
\Psi_2\circ w_\delta \in W^{1,2}((0,1)).
\]
By the discussion in Sect.~\ref{sec:preliminaries}, $w_\delta\in C^0([0,1];[0,+\infty])$ with respect to the topology of the extended half-line.  The crucial point is the additional term $2^{sj}$, stemming from the structure $|u''|^2 (1+u'^2)^{-5/2}$  in the definition of $\Phi_{jk}$. It yields the decay factor $2^{-3sj}$ ensuring convergence of the sum over $j$.
 \medskip

\noindent\cb{6}
We now fix parameters $\beta\in\left(\frac13,\frac{1}{2}\right)$, $s>1$, and $\delta\in\left(2^{1-3 s},2^{-s-1}\right)$, i.e. conditions \eqref{eq:1dcond1} --\eqref{eq:condition2}  are satisfied. We then define the function from the statement of this theorem
\[
u(x):=U(x)+f_\delta(x), \quad\text{ where }\quad U(x):=\int_0^x w_\delta(\xi) \diff \xi,
\]
and $f_\delta$ is the generalized Cantor function from Step \cbm 2. Since
$w_\delta\in L^1((0,1))$ by Step \cbm 3, the above integral is well defined. Hence
\[
U\in W^{1,1}((0,1))\subset BV((0,1)),
\]
so in $U$ is absolutely continuous on $[0,1]$. Since also
$f_\delta\in BV((0,1))\cap C^0([0,1])$, it follows that
\[
u\in BV((0,1))\cap C^0([0,1])
\quad \text{ with }\quad
Du = w_\delta\,\mathcal L^1 + D^cf_\delta .
\]
Equivalently,  the absolutely continuous density of $Du$ is $w_\delta$, its Cantor part coincides with that of $f_\delta'$, and its jump part vanishes:
\[
(u')^a = w_\delta \quad\text{a.e. on }(0,1),\qquad
D^c u = D^c f_\delta,\qquad
D^ju = 0.
\]
In fact, with respect to the chosen continuous representative \(w_\delta\), we have the generalized Cantor set 
\begin{align*}
         	\mathbb D_\delta = \bigg\{ x\in [0,1] \ \bigg| \ \Big| (u')^a(x)\Big| = \infty \bigg\} 
          	 = \bigg\{ x\in [0,1] \ \bigg| \ (u')^a(x) = +\infty \bigg\} 
            \end{align*}
is closed and, in particular, of measure zero. Since $f_\delta$ is continuous, its singular part is purely Cantor. Moreover,
\[
|D^c u|((0,1))
=
|D^c f_\delta|((0,1))
=
f_\delta(1)-f_\delta(0)
=
1>0.
\]
In particular, $u\notin SBV((0,1))$.

It remains to clarify the role of $U$. Although $U$ provides the absolutely continuous part needed for finite curvature energy, it does not belong to $W^{2,1}((0,1))$. Indeed, on each open interval $I_{jk}$, the function $U$ is of class $C^2$, and
\[
U''(x)=w_\delta'(x)
=
\frac{2^{sj}}{(1-2\delta)\delta^{j-1}}
\Phi'\!\left(
\frac{x-a_{jk}}{(1-2\delta)\delta^{j-1}}+\frac12
\right),
\qquad x\in I_{jk}.
\]
Hence $U''\in L^1_{\mathrm{loc}}(I_{jk})$ for every $k,j$. However, with the same change of variables as in Step~\cbm 3
we obtain
\begin{align*}
2^{-sj}\int_{I_{jk}} |U''(x)| \diff x
&= \int_0^1 |\Phi'(y)| \diff y 
= C_\beta\beta \int_0^1
\frac{|2y-1|}{y^{\beta+1}(1-y)^{\beta+1}} \diff y \\
   	&\   \succeq \int_{0}^{1/4}  \frac{1 }{y^{\beta+1}} \diff y +\int_{3/4}^1  \frac{1 }{(1-y)^{\beta+1}} \diff y
            \succeq \int_{0}^{1/4}  \frac{1 }{y^{4/3}} \diff y=\infty,
\end{align*}
since $\beta>1/3$.
Thus $U''\notin L^1(I_{jk})$ for every $k,j$, and therefore
\[
U\notin W^{2,1}((0,1)).
\]

Nevertheless, Step \cbm 5 shows that $\Psi_2\circ w_\delta \in W^{1,2}((0,1))$.
Consequently, the graph of $U$ has finite curvature energy:
       \begin{align*}
                \int_0^1 \kappa_U(x)^2 \sqrt{1+U'(x)^2}\diff x 
            	= \int_{\operatorname{graph}[U]} \kappa_U(x)^2\diff s_U(x)  = 
            	\int_0^1 \Big( \frac{\diff\ }{\diff x} \Psi_2\big(\underbrace{ U'(x)}_{=w_\delta(x)}\big)\Big)^2\diff x <\infty.
            \end{align*}    
where
$
\kappa_U=U''(1+U'^2)^{-3/2}
$
denotes the classical curvature on $(0,1)\setminus \mathbb D_\delta$. Since
\[
\mathcal H^1(\operatorname{graph}(U))
\le 1+\|U'\|_{L^1((0,1))}<\infty,
\]
it follows by Cauchy-Schwarz that $\kappa_U\in L^1(\operatorname{graph}[U])$ as well.

Thus $u=U+f_\delta$ already has the required nontrivial Cantor part, while $U$ provides the absolutely continuous contribution with finite curvature energy. The finiteness of the relaxed Willmore energy of $u$ will be established in the next step.

\medskip

\noindent\cb{7}
We now prove that
\[
\overline{\W}(u)\le \int_0^1 \left|(\Psi_2\circ w_\delta)'(x)\right|^2 \diff x
=\W(U)=:\W^a(u).
\]
\begin{proof}
By the relaxation theorem~\ref{thm:maso}, there is a recovery sequence $q_n\in W^{2,2}((0,1))$ such that $q_n\to u$ in $L^1((0,1))$ and
\[
\mathcal F_2(q_n)\longrightarrow
|Du|((0,1))+\int_0^1 \big|(\Psi_2\circ w_\delta)'(x)\big|^2 \diff x.
\]
The lower semicontinuity of the total variation gives
\[
|Du|((0,1))\le\liminf_{n\to\infty}\int_0^1|q_n'|\,\diff x.
\]
Since $\mathcal F_p(q_n)=\int_0^1|q_n'|\diff x+\W(q_n)$, it follows that
\begin{align*}
\limsup_{n\to\infty}\mathcal \W(q_n)
&\le \lim_{n\to\infty}\mathcal F_2(q_n)
- \liminf_{n\to\infty}\int_0^1|q_n'|\,\diff x
\le \int_0^1 \big|(\Psi_2\circ w_\delta)'(x)\big|^2 \diff x.
\end{align*}
Taking the infimum over all approximating sequences proves the claim.
\end{proof}
  
Therefore, $\overline{\W}(u)\le\W^a(u) <\infty$. Together with $|D^c u|((0,1))>0$, which was shown in Step \cbm 6, this proves the theorem.
\end{proof}

\begin{corollary}
[Arbitrarily small relaxed Willmore energy with a nontrivial Cantor part]
\label{cor:no-positive-gap-pure-Willmore}
For every $\varepsilon>0$, there exists a continuous and nondecreasing
function
\[
u_\varepsilon\in BV((0,1))\setminus SBV((0,1))
\]
such that
\[
|D^c u_\varepsilon|((0,1))=1
\qquad\text{and}\qquad
\overline{\W}(u_\varepsilon)<\varepsilon.
\]
In particular,
\[
\inf\left\{
\overline{\W}(v)
\,\middle|\,
v\in BV((0,1)),\
|D^c v|((0,1))=1
\right\}
=0.
\]
\end{corollary}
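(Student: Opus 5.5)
The plan is to rescale the example of Thm.~\ref{thm:finit_will} vertically, as in Figure~\ref{fig:plot}. Set $u_\lambda:=\lambda U+f_\delta$ for $\lambda>0$ small, with $U,f_\delta$ from that proof. Then $Du_\lambda=\lambda w_\delta\mathcal L^1+D^cf_\delta$. Hence $|D^cu_\lambda|((0,1))=1$ for every $\lambda$, and $u_\lambda$ is continuous and nondecreasing because $w_\delta\ge0$ and $f_\delta$ is nondecreasing. So $u_\lambda\notin SBV((0,1))$. Moreover $(u_\lambda')^a=\lambda w_\delta$ still equals $+\infty$ exactly on $\mathbb D_\delta$, so the positive Cantor part remains concentrated on $Z^+[(u_\lambda')^a]$.

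To obtain $u_\lambda\in X^2_\psi((0,1))$ and an energy bound, I will repeat Steps~\cbm 4 and \cbm 5 with $\Phi_{jk}$ replaced by $\lambda\Phi_{jk}$. The identity \eqref{eq:hauptsatz} carries over verbatim, since it only used continuity, finiteness of the pieces and the value $M$ on $\mathbb D_\delta$. It then remains to bound
\[
E^\lambda_{jk}=\int_{I_{jk}}\frac{\lambda^2\,|\Phi_{jk}'|^2}{\bigl(1+\lambda^2\Phi_{jk}^2\bigr)^{5/2}}\,\diff x .
\]
Use $(1+t^2)^{-5/2}\le t^{-5}$ with $t=\lambda\Phi_{jk}$. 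This gives $E^\lambda_{jk}\le \lambda^{-3}\cdot(\text{the same majorant as in Step~\cbm 5})$, which is not small. So the naive estimate goes the wrong way: for large slopes, making them smaller increases the energy.

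I therefore expect to scale the other way, taking $\lambda\to\infty$. Then $E^\lambda_{jk}\le\lambda^{-3}C(\delta^{-1}2^{-3s})^j$, and the total $\int|(\Psi_2\circ\lambda w_\delta)'|^2\le C\lambda^{-3}$ tends to $0$. The Cantor part is unaffected by $\lambda$, and the bound $\overline\W(u_\lambda)\le\int|(\Psi_2\circ(u_\lambda')^a)'|^2$ follows exactly as in Step~\cbm 7 via Thm.~\ref{thm:maso}. This holds because $u_\lambda\in X^2_\psi$: $\lambda w_\delta\in L^1$, $\Psi_2\circ\lambda w_\delta\in W^{1,2}$, and $D^su_\lambda$ lives on $Z^+$. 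Choosing $\lambda$ with $C\lambda^{-3}<\varepsilon$ gives $u_\varepsilon:=\lambda U+f_\delta$, and the infimum statement follows immediately. This matches the caption of Figure~\ref{fig:plot} up to normalization: the relative weight of $U$ against $f_\delta$ is what matters, and one could equivalently keep the total rise fixed.

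The main obstacle is checking that the Step~\cbm 5 majorant is truly uniform once the factor $\lambda^{-3}$ is extracted. The bound $(1+t^2)^{-5/2}\le t^{-5}$ needs no lower bound on $t$, so this should be fine. The $L^1$ bound on $\lambda w_\delta$ grows like $\lambda$ but stays finite, which is all the argument requires.
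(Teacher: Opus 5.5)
Your proposal is correct and is essentially the paper's proof: both set $u_A=A\,U+f_\delta$ with $A\to\infty$, keep $D^c u_A=D^c f_\delta$, extract the factor $A^{-3}$ via $(1+A^2w_\delta^2)^{-5/2}\le A^{-5}w_\delta^{-5}$, and then invoke Step~7. The only difference is that you bound directly by the Step~5 majorant $\int_0^1|w_\delta'|^2w_\delta^{-5}\diff x$, which works for every $\lambda>0$, whereas the paper uses $w_\delta\ge 2^s$ to return to a multiple of $\W^a(u)$. Drop your closing aside about normalization, however: the energy is not invariant under vertical rescaling, and fixing the total rise would multiply $D^cf_\delta$ by a factor tending to $0$ while keeping the slope of bounded size. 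So the $A^{-3}$ gain comes from the absolute size of the slope $A\,w_\delta$, not from the relative weight of $U$ against $f_\delta$.
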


\begin{proof}
Let $u=U+f_\delta$ be the function constructed in Thm.~\ref{thm:finit_will}, and let $w_\delta=U'$ denote the absolutely continuous part of its derivative.
On every removed interval of generation $j$, the construction gives
\[
w_\delta\geq 2^{sj}
\quad \Rightarrow \quad\forall j\geq1 \colon\quad w_\delta(x)\geq c_0:=2^s>0
\qquad\text{for a.e. }x\in(0,1).
\]
For $A\ge1$, define
\[
u_A:=A\,U+f_\delta
\quad \Rightarrow \quad 
Du_A=A w_\delta\,\mathcal L^1+D^c f_\delta .
\]
Hence $D^c u_A=D^c f_\delta\neq0$, and therefore $u_A\notin SBV((0,1))$ for every $A\geq1$.

The blow-up set of the continuous $\overline{\mathbb R}$-valued representative of $A w_\delta$ is still $\mathbb D_\delta$, and the positive singular measure $D^s u_A=D^c f_\delta$ is concentrated on this set. Moreover, for a.e. $x\in(0,1)\setminus\mathbb D_\delta$,
\[
\left|
\bigl(\Psi_2\circ(Aw_\delta)\bigr)'(x)
\right|^2
=
\frac{A^2|w_\delta'(x)|^2}
     {\bigl(1+A^2w_\delta(x)^2\bigr)^{5/2}}.
\]
Using $A\geq1$ and $w_\delta\geq c_0$, we further obtain $w_\delta\sqrt{1+c_0^{-2}} \ge \sqrt{w^2_\delta+1}$
\[
\frac1{w_\delta^5}
\le
\frac{\left(1+c_0^{-2}\right)^{5/2}}{(1+w_\delta^2)^{5/2}}
\quad\Rightarrow\quad
\frac{A^2|w_\delta'|^2}{(1+A^2w_\delta^2)^{5/2}}
\le
\frac{|w_\delta'|^2}{A^3w_\delta^5}
\le \frac{\left(1+c_0^{-2}\right)^{5/2}|w_\delta'|^2}{A^3(1+w_\delta^2)^{5/2}}.
\]
Therefore, it follows
\[\int_0^1
\frac{A^2|w_\delta'(x)|^2}{(1+A^2 w_\delta(x)^2)^{5/2}} \diff x 
\le
A^{-3}
\left(1+c_0^{-2}\right)^{5/2}
\int_0^1
\frac{|w_\delta'(x)|^2}{(1+w_\delta(x)^2)^{5/2}} \diff x .
\]
The final expression is integrable by the construction in Theorem~\ref{thm:finit_will}. Repeating the  arguments from Steps~\cbm 4 and \cbm 5 gives
\[
\Psi_2\circ(Aw_\delta)\in W^{1,2}((0,1)).
\]
Hence $u_A\in X_\psi^2((0,1))$. Step~\cbm7 from  Thm.~\ref{thm:finit_will} therefore yields
\[
\begin{aligned}
\overline{\W}(u_A)
&\leq
\int_0^1
\frac{A^2|w_\delta'(x)|^2}
     {\bigl(1+A^2w_\delta(x)^2\bigr)^{5/2}}
\,\diff x\\
&\leq
\frac{(1+c_0^{-2})^{5/2}}{A^3}
\int_0^1
\frac{|w_\delta'(x)|^2}
     {\bigl(1+w_\delta(x)^2\bigr)^{5/2}}
\,\diff x.
\end{aligned}
\]
Thus there exists a constant $C>0$, independent of $A$, such that
\[
\overline{\W}(u_A)\leq C A^{-3}.
\]
Choose $A>\max\{1,{C}^{1/3}\varepsilon^{-1/3}\}$ and set $u_\varepsilon:=u_A$. Then
\[
|D^c u_\varepsilon|((0,1))=1
\qquad\text{and}\qquad
\overline{\W}(u_\varepsilon)<\varepsilon.
\]

\end{proof}


\begin{remark}[No positive energy gap for the pure relaxed Willmore term]
\label{rem:no-positive-gap-pure-Willmore}
Geometrically, the Cantor part is kept fixed, while only the absolutely continuous part of the graph is stretched in the vertical direction. The regular graph becomes steeper and longer, but its bending contribution decreases like $A^{-3}$.

This observation concerns only the pure relaxed Willmore term $\overline{\W}$. It is not a corresponding small-energy statement for the full Dal Maso-Fonseca-Leoni-Morini functional $\mathcal F_2$. Indeed, for smooth functions this functional contains, in addition to the Willmore term, the total variation term. For the rescaled functions $u_A=A\,U+f_\delta$, the total variation satisfies \[ |Du_A|((0,1)) = A\int_0^1 w(x) \diff x + |D^c f_\delta|((0,1)). \] Thus the total variation grows linearly in $A$, although the pure Willmore term decays like $A^{-3}$. This is precisely why the above argument applies to $\overline{\W}$, but not to the full relaxed functional $\overline{\mathcal F}_2$.

This remark rules out one natural strategy. In the parametric Willmore theory, energy thresholds and small-energy regularity results are decisive tools, see, for instance, the small-energy theory of Kuwert and Sch\"atzle \cite{kuwert2001willmore}. For the pure relaxed Willmore term of graphs, no analogous mechanism can exist: there is no positive energy threshold below which Cantor parts are excluded. Consequently, any regularity theory for minimizers of relaxed Willmore-type graph functionals must rely on minimality or on the lower-order terms, and not on smallness of the curvature energy alone. One idea is to consider fixed boundary data with smallness assumptions, as used for elliptic and parabolic Willmore problems, see \cite{gulyak2026willmore, gulyak2026}. 
\end{remark}

\begin{remark}[Cantor part vanishes by rotation] \label{rem:rot}
It is important to note that the appearance of a Cantor part in the derivative is tied to the chosen projection direction. Let $g:[0,1]\to\mathbb R$ be continuous and nondecreasing, and rotate its graph clockwise by an angle $\varphi\in(0,\frac{\pi}{2})$. Writing
\[ X_\varphi(x):=x\cos\varphi+g(x)\sin\varphi, \qquad Y_\varphi(x):=-x \sin\varphi+ g(x)\cos\varphi,\]
we see that $X_\varphi$ is strictly increasing. Hence the rotated curve can again be written as a graph, say
\[
Y_\varphi=v_\varphi(X_\varphi).
\]
Moreover, for $x_1<x_2$, by expoiling the monotonicity of $f$
\[ 
|Y_\varphi(x_2)-Y_\varphi(x_1)| \le \sin\varphi\,|x_2-x_1|+\cos\varphi\,|g(x_2)-g(x_1)| \le L_\varphi |X_\varphi(x_2)-X_\varphi(x_1)|,
\]
where one may take
\[
L_\varphi:=\tan\varphi+\cot\varphi.
\]
Thus $v_\varphi$ is Lipschitz. By \cite[Theorem 3.16, p.~127]{ambrosio2000functions}, $v_\varphi\in W^{1,\infty}$, and therefore its distributional derivative has no singular part.

Applied to the Cantor function $f_\delta$, this shows that the Cantor part of $D f_\delta$ is not invariant under changes of projection: it disappears after such a rotation.  The same projection effect occurs for the function $u$ constructed in Thm.~\ref{thm:finit_will}. Since $u$ is continuous and nondecreasing, every clockwise rotation by an angle $\varphi\in(0,\frac{\pi}{2})$ yields a Lipschitz graph, whose distributional derivative has no singular part. In the original projection, however,
\[
D^c u=D^c f_\delta\neq0.
\]
The crucial difference from the pure Cantor function is that this Cantor part is concentrated precisely where the absolutely continuous slope blows up, namely on $Z^+[(u')^a]$. Thus the Cantor part is compatible with finite relaxed Willmore energy: it is a projection-induced singular part and creates no additional curvature cost in the approximation constructed above.
\end{remark}

\subsection*{Plot with explicit values}

We illustrate the function $u$ from Thm.~\ref{thm:finit_will} for the
parameter choice
$ s=2>1$, $\delta=\frac{1}{31}$, and $\beta=\frac{2}{5}\in\left(\frac13,\frac12\right)$.
First, we check that all assumptions are satisfied. Indeed,  the conditions \eqref{eq:condition}, \eqref{eq:condition2} are satisfied:
\[
2^{1-3s}
= 2^{-5}
= \frac{1}{32}
< \frac{1}{31}
= \delta
< \frac{1}{8}
= 2^{-s-1}.
\]

\begin{figure}[H]
\centering
\includegraphics{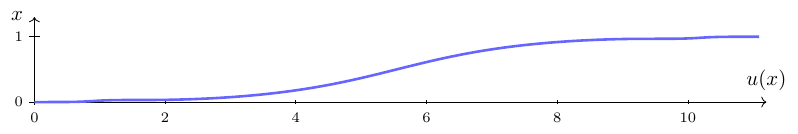}
\caption{The function $u$ constructed in Thm.~\ref{thm:finit_will} with $s=2$, $\delta=\frac{1}{31}$, and $\beta=\frac{2}{5}$. The axes are interchanged: the value $u(x)$ is plotted on the horizontal axis and $x$ on the vertical axis, so that both axes can be shown with the same scale.}
\label{fig:explicit-plot-u}
\end{figure}

Since $u$ is nondecreasing, its maximum is attained at $x=1$. We have
\[
u(1)=U(1)+f_\delta(1).
\]
By the normalization of the Cantor function, $f_\delta(1)=1$, while
\[
u(1)=U(1)+1
=
\int_0^1 w_\delta(x) \diff x +1
=
2^{s+1}\frac{1-2\delta}{1-2^{s+1}\delta}+1 
=
11+\frac{2}{23}
\approx 11.09 .
\]

\subsection*{The graph as a \texorpdfstring{$C^1$}{C1}-curve}
In this subsection we show that the graph of the function constructed in Thm.~\ref{thm:finit_will} can be represented as the image of a regular $C^1$-curve in $\mathbb R^2$, parametrized by arc-length. More precisely, although the derivative of $u$ has a nontrivial Cantor part, the corresponding graph admits a continuous unit tangent field, which becomes vertical precisely on the Cantor set. Since $u$ has no jump part, this graph coincides with the usual graph of $u$.

As in \cite{acerbi2017geometric}, $c_u$ denotes the Cartesian curve
associated with $u\in BV((0,1))\cap C^0([0,1])$, namely
\[
c_u:[0,1]\to\mathbb R^2,
\qquad
c_u(x):=(x,u(x)).
\]
Since $u$ has no jump part, $c_u$ is continuous and belongs to
$BV((0,1);\mathbb R^2)\cap C^0([0,1];\mathbb R^2)$. Writing
\[
Du=(u')^a\,\mathcal L^1 + D^c u,
\]
we have, in the sense of vector-valued Radon measures; see
\cite[Remark~3.1, p.~459]{acerbi2017geometric},
\begin{align}
Dc_u &= (1,(u')^a)\,\mathcal L^1 + (0,1)\,D^c u .
\end{align}
Since $\mathcal L^1$ and $|D^c u|$ are mutually singular, the total variation
of $Dc_u$ splits accordingly, and hence
\begin{align}
|Dc_u|
=
\sqrt{1+|(u')^a|^2}\,\mathcal L^1 + |D^c u|.
\label{eq:Dcu}
\end{align}

We define the \emph{arc-length function}
\[
s:[0,1]\to[0,L],
\qquad s(x):=|Dc_u|([0,x]),
\qquad \text{ where }\quad L:=|Dc_u|([0,1]).
\]
The measure $\mathcal L^1$ is atomless, and $|D^c u|$ is atomless because it is the Cantor part of $Du$, see, for instance, \cite[p.~139]{ambrosio2000functions}. Therefore $|Dc_u|$ has no atoms, and the distribution function $s$ is continuous on $[0,1]$. Moreover, if $0\le x<y\le 1$, then by \eqref{eq:Dcu}
\[
s(y)-s(x) = |Dc_u|((x,y]) 
\ge
\int_x^y \sqrt{1+|(u')^a(t)|^2} \diff t
\ge
\int_x^y 1 \diff t
= y-x > 0.
\]
Thus $s$ is strictly increasing. Consequently,
$s:[0,1]\to[0,L]$ is a homeomorphism, and its inverse $s^{-1}:[0,L]\to[0,1]$ is well defined.

We can now introduce the arc-length parametrization of the graph of the function constructed in Thm.~\ref{thm:finit_will} by setting
\[
\gamma:[0,L]\to\mathbb R^2,
\qquad
\gamma(\sigma):=c_u(s^{-1}(\sigma)).
\]
The following result shows that, in the present construction, this parametrized Cartesian curve is in fact a regular $C^1$-curve. This is in agreement with the general arc-length parametrization theory for continuous Cartesian curves developed by Acerbi and Mucci, see \cite[Prop.~5.5 and Thm.~10.2 and the discussion after formula~(10.5)]{acerbi2017geometric}.  In their framework such curves arise naturally in the study of the geometric functionals $\mathcal F_p$, in particular in the case $p=1$. For the reader's convenience, and also to make the special structure of the present one-dimensional construction explicit, we give a direct proof.

\begin{thm}\label{thm:c1curv}
Let $u\colon[0,1]\to\mathbb R$ be the function constructed in Thm.~\ref{thm:finit_will}. In particular, $u\in BV((0,1))\cap C^0([0,1])$, $Du$ has a nontrivial Cantor part, and $ (u')^a=w_\delta $ on $[0,1]\setminus\mathbb D_\delta $, as the chosen continuous representative.
Let
\[
c_u(x):=(x,u(x)),
\qquad
s(x):=|Dc_u|([0,x]),
\qquad
L:=|Dc_u|([0,1]),
\]
and define the arc-length parametrization
\[
\gamma:[0,L]\to\mathbb R^2,
\qquad
\gamma(\ell):=c_u(s^{-1}(\ell)).
\]
Then the following assertions hold.

\begin{itemize}
\item[\cb{1}]
The curve $\gamma$ is an injective regular $C^1$-curve parametrized by
arc-length. Moreover, for every $x\in[0,1]$,
\begin{align*}
\gamma(s(x))=c_u(x),
\qquad \gamma_1(s(x))=x.
\end{align*}
Its unit tangent field, pulled back to the original parameter $x$, is given by
\begin{align}
\gamma'(s(x))
=
\begin{cases}
\dfrac{(1,w_\delta(x))}{\sqrt{1+w_\delta(x)^2}},
& \text{if }x \in [0,1]\setminus\mathbb D_\delta,\\[1.2em]
(0,1),
& \text{if }x\in \mathbb D_\delta .
\end{cases}
\label{eq:tau-def}
\end{align}

\item[\cb{2}]
The curve $\gamma$ has square-integrable curvature. More precisely, $\gamma\in W^{2,2}(0,L;\mathbb R^2)$. If $\kappa_\gamma$ denotes the scalar curvature of the arc-length
parametrized curve $\gamma$, defined by
\[
\gamma''=\kappa_\gamma\,\nu_\gamma
\qquad\text{a.e. in }(0,L),
\]
where $\nu_\gamma$ is the unit normal obtained by rotating $\gamma'$ by
$\pi/2$, then $\kappa_\gamma\in L^2(0,L)$ and
\[
\kappa_\gamma(\ell)=
\begin{cases}
\dfrac{w_\delta'(s^{-1}(\ell))}{(1+w_\delta(s^{-1}(\ell))^2)^{3/2}},
& \ell\in s\left([0,1]\setminus\mathbb D_\delta\right),\\
0,
& \ell\in s(\mathbb D_\delta).
\end{cases} 
\quad\text{for a.e. }\ell\in(0,L)
\]
Consequently,
\begin{align}
\int_0^L |\kappa_{\gamma}(\ell)|^2 \diff\ell
&=
\int_0^1
\frac{|w_\delta'(x)|^2}
{\bigl(1+w_\delta(x)^2\bigr)^{5/2}} \diff x
=
\W^a(u)
<\infty .
\label{eq:kappa-L2}
\end{align}

\item[\cb{3}]
The graph $c_u([0,1])$ is not a one-dimensional $C^2$-submanifold of $\mathbb R^2$. More precisely, for every $x_0\in\mathbb D_\delta$, the set $c_u([0,1])$ does not admit a local regular $C^2$-parametrization in any neighbourhood of $c_u(x_0)$.
\end{itemize}
\end{thm}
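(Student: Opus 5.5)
The plan is to work throughout in the original parameter $x$ and transport everything to arc length via the homeomorphism $s$. By \eqref{eq:Dcu}, $\diff s=\sqrt{1+w_\delta^2}\,\diff x+\diff|D^cu|$. Since $|D^cu|$ is concentrated on $\mathbb D_\delta$, the restriction of $s$ to each removed interval $I_{jk}$ is $C^1$ with $s'=\sqrt{1+w_\delta^2}$. Set $\tau(x)$ equal to the right-hand side of \eqref{eq:tau-def}. Continuity of $w_\delta$ as a $[0,\infty]$-valued map shows that $\tau\colon[0,1]\to S^1$ is continuous. Hence $T:=\tau\circ s^{-1}$ is continuous on $[0,L]$ with $|T|=1$.

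\textbf{Step 1.} I will prove $\gamma(\ell)=\gamma(0)+\int_0^\ell T\,\diff\sigma$. Equivalently, I will show that for $0\le x<y\le1$,
\[
c_u(y)-c_u(x)=\int_{(x,y]}\tau\,\diff|Dc_u|,
\]
and then change variables by the push-forward $s_\#|Dc_u|=\mathcal L^1\mrest[0,L]$. This push-forward identity holds because $s$ is the continuous strictly increasing distribution function of an atomless measure. The first identity is just $Dc_u=\tau\,|Dc_u|$. It is obtained by comparing the two parts: on the absolutely continuous part, $(1,w_\delta)=\tau\sqrt{1+w_\delta^2}$; on the Cantor part, $(0,1)\,D^cu=\tau\,|D^cu|$, since $D^cu\ge0$ and $\tau=(0,1)$ on $\mathbb D_\delta$. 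Hence $\gamma\in C^1$, $|\gamma'|=1$, and $\gamma'(s(x))=\tau(x)$. Injectivity and $\gamma_1(s(x))=x$ are immediate from $c_u$ being a graph.

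\textbf{Step 2.} Write $\theta=\arctan w_\delta\in[0,\pi/2]$, so that $\gamma'=(\cos\vartheta,\sin\vartheta)$ with $\vartheta:=\theta\circ s^{-1}$ continuous. It suffices to show $\vartheta\in W^{1,2}(0,L)$, since then $\gamma''=\vartheta'\nu_\gamma$ and $\kappa_\gamma=\vartheta'$. On $s(I_{jk})$ the chain rule gives
\[
\vartheta'=\frac{w_\delta'}{(1+w_\delta^2)\sqrt{1+w_\delta^2}},
\]
and
\[
\int_{s(I_{jk})}|\vartheta'|^2\,\diff\ell=\int_{I_{jk}}\frac{|w_\delta'|^2}{(1+w_\delta^2)^{5/2}}\,\diff x=E_{jk}.
\]
Summing over $j,k$ and using Step~5 of Thm.~\ref{thm:finit_will} yields a finite total. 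The main obstacle is that $s(\mathbb D_\delta)$ has positive measure ($=|D^cu|(\mathbb D_\delta)=1$), so I must show that the distributional derivative of $\vartheta$ carries no singular part on this fat Cantor-type set. For this I will mimic Step~4 of Thm.~\ref{thm:finit_will}. The key observation is that $\vartheta\equiv\pi/2$ on $s(\mathbb D_\delta)$ and at both endpoints of every $s(I_{jk})$. Thus, letting $G:=\vartheta'$ on the open set $\bigcup s(I_{jk})$ and $G:=0$ elsewhere, the same case distinction gives $\vartheta(\ell)=\pi/2+\int_0^\ell G$, with $G\in L^2\subset L^1$. Hence $\vartheta$ is absolutely continuous, $\kappa_\gamma=G$ has the stated formula, and \eqref{eq:kappa-L2} follows.

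\textbf{Step 3.} I will argue by contradiction. Suppose a regular $C^2$ parametrization exists near $c_u(x_0)$. Then the graph is locally a $C^2$ curve, whose arc-length reparametrization agrees with $\gamma$ up to orientation and translation. So $\kappa_\gamma$ would coincide a.e. near $s(x_0)$ with a continuous, hence bounded, function. However, every neighbourhood of $x_0\in\mathbb D_\delta$ contains entire intervals $I_{jk}$, and we may choose them with endpoints arbitrarily close to $x_0$. Near an endpoint, say $y\to0^+$ in rescaled variables, we have $\Phi\sim C_\beta y^{-\beta}$ and $\Phi'\sim -\beta C_\beta y^{-\beta-1}$. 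Hence
\[
|\kappa_\gamma|\approx\frac{|w_\delta'|}{w_\delta^3}\sim c_{j}\,y^{2\beta-1}\to\infty
\]
because $\beta<\tfrac12$. This contradicts boundedness, so the graph admits no local $C^2$ parametrization at any point of the Cantor trace.
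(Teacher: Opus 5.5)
Your proposal is correct and follows essentially the same route as the paper: you verify $Dc_u=\tau\,|Dc_u|$ with the explicit continuous unit tangent and push forward by $s$ to get the $C^1$ arc-length parametrization, and you use the tangent angle $\vartheta=\theta\circ s^{-1}$ with the Step-4-type telescoping argument to exclude a singular part on $s(\mathbb D_\delta)$, a set of measure $1$. Your part 3 is slightly simpler than the paper's: you use that $|\kappa|\sim c_j\,y^{2\beta-1}\to\infty$ already at the endpoints of each fixed removed interval, whereas the paper makes the generation-dependent choice $y_j=2^{2sj/(2\beta-1)}$.
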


\begin{proof}
\cb{1} \textbf{$C^1$-arc-length parametrization.} \\ 
Since $u$ is monotonically increasing in the construction of Thm.~\ref{thm:finit_will}, its Cantor part is a positive measure. Hence $|D^c u|=D^c u $. By the polar decomposition theorem for vector-valued Radon measures \cite[Corollary~1.29]{ambrosio2000functions}, there exists a unique  $|Dc_u|$-measurable function $\tau\in [L^1([0,1],|Dc_u|)]^2$ such that
\[
\tau:[0,1]\to\mathbb S^1, \qquad Dc_u=\tau\,|Dc_u|.
\]
We now characterize this polar vector explicitly.
On the absolutely continuous part we have $(u')^a=w_\delta$. Then \eqref{eq:Dcu}  gives 
\[
(1,(u')^a)\mathcal L^1 = \tau \sqrt{1+|(u')^a|^2}\mathcal L^1
\quad\Rightarrow\quad \tau(x)
=
\frac{(1,w_\delta(x))}{\sqrt{1+w_\delta(x)^2}}
\qquad
\mathcal L^1\text{-a.e. on }[0,1]\setminus\mathbb D_\delta .
\]
On the Cantor part, by \cite[Remark~3.1, p.~459]{acerbi2017geometric} the measure $Dc_u$ equals $(0,1)D^c u$, whereas
$ |Dc_u|=D^c u$. Hence 
\[
\tau(x)=(0,1)
\qquad
D^c u\text{-a.e. on }\mathbb D_\delta .
\]
We now choose a continuous representative of this measure-theoretic tangent.
Consider the map
\[
F:[0,\infty]\to\mathbb S^1,
\qquad
F(t):=
\begin{cases}
\dfrac{(1,t)}{\sqrt{1+t^2}},
& t<\infty,\\[0.5em]
(0,1),
& t=\infty .
\end{cases}
\]
Here $[0,\infty]$ is endowed with the usual topology of the extended half-line. Since $w_\delta\colon[0,1]\to[0,\infty]$ is continuous and $w_\delta(x)=+\infty$ precisely on $\mathbb D_\delta$, the composition  $F\circ w_\delta$ is continuous on $[0,1]$. Moreover, the preceding identification shows that $F(w_\delta)=\tau$ holds $|Dc_u|$-a.e. Thus, replacing $\tau$ by this continuous representative, we may write
\[
\tau(x)
=
\begin{cases}
\dfrac{(1,w_\delta(x))}{\sqrt{1+w_\delta(x)^2}},
& \text{if }x\in[0,1]\setminus\mathbb D_\delta,\\[0.5em]
(0,1),
& \text{if }x\in\mathbb D_\delta .
\end{cases}
\]
which later will show \eqref{eq:tau-def}.

Now, keeping in mind that $s$ is a homeomorphism, we finally define the curve $\gamma$ by setting
\[
\gamma(\ell)
:=
c_u(0)+\int_0^\ell \tau(s^{-1}(r)) \diff r,
\qquad \ell\in[0,L].
\]
The map $\tau\circ s^{-1}:[0,L]\to\mathbb S^1$ is continuous. Hence
\[
\gamma\in C^1([0,L];\mathbb R^2),
\qquad
\gamma'(\ell)=\tau(s^{-1}(\ell))
\quad\text{for every }\ell\in[0,L].
\]
Since $|\tau|=1$ everywhere, we obtain $|\gamma'(\ell)|=1
$ for every $\ell\in[0,L],$
and therefore $\gamma$ is parametrized by arc-length.

It remains to prove that this parametrization coincides with the Cartesian curve $c_u$ after the change of variables $s$. We first claim that the pushforward measure of $\mathcal L^1$ under $s^{-1}$ is
\begin{align}
(s^{-1})_\#\bigl(\mathcal L^1\mrest[0,L]\bigr)
=
|Dc_u|.
\label{eq:sigmapush}
\end{align}
Indeed, for every $x\in[0,1]$, 
\[
\bigl((s^{-1})_\#(\mathcal L^1\mrest[0,L])\bigr)([0,x])
=
\mathcal L^1\bigl(s([0,x])\bigr)
=
\mathcal L^1([0,s(x)])
=
s(x)
=
|Dc_u|([0,x]).
\]
Since both sides are finite Borel measures on $[0,1]$ and coincide on the generating family of intervals $[0,x]$, \eqref{eq:sigmapush} follows.
Now let $x\in[0,1]$. Using the change-of-variables formula for push-forward measures \eqref{eq:sigmapush} and the polar decomposition
$Dc_u=\tau\,|Dc_u|$, we obtain
\begin{align*}
\gamma(s(x))
&=
c_u(0)+\int_0^{s(x)} \tau(s^{-1}(r)) \diff r  
=
c_u(0)+\int_{[0,x]} \tau \diff |Dc_u| \\
&=
c_u(0)+Dc_u([0,x]).
\end{align*}
Since $c_u\in BV((0,1);\mathbb R^2)\cap C^0([0,1];\mathbb R^2)$, the
one-dimensional structure theorem for BV functions
\cite[Theorem 3.28]{ambrosio2000functions} applied componentwise gives, for $0<y<x<1$,
\[
c_u(x)-c_u(y)=Dc_u((y,x]).
\]
Since $c_u$ is continuous, the measure $Dc_u$ has no atoms. Hence
\[
Dc_u((y,x])=Dc_u([y,x]) \quad \text{ and }
\quad 
c_u(x)-c_u(0)=Dc_u([0,x]).
\]
Consequently, for $\gamma$ and its first component  $\gamma_1$ it holds
\[
\gamma(s(x))=c_u(x)
\quad \text{ and }
\quad \gamma_1(s(x))=x
\qquad\text{for every }x\in[0,1].
\]
Equivalently, since $s$ is onto, $\gamma_1(\ell)=s^{-1}(\ell)$ for every $\ell\in[0,L]$.
In particular, $\gamma$ is injective. Moreover, $\gamma([0,L])=c_u([0,1])$. Thus $\gamma$ is an injective regular $C^1$-curve whose image is precisely the graph of $u$.
Finally, from the definition of $\gamma$ we obtain 
\[ 
\forall\,\ell\in[0,L]\colon \quad \gamma'(\ell)=\tau(s^{-1}(\ell))
\quad \overset{\ell=s(x)}\Rightarrow \quad
\forall\,x\in[0,1]\colon\quad \tau(x)=\gamma'(s(x)).
\]
Using the explicit representation of $\tau$, this gives \eqref{eq:tau-def}.



\medskip 
\noindent\cb{2} \textbf{Square-integrable curvature.}\\
We first show that the arc-length parametrization constructed above has square-integrable curvature. This should be contrasted with the fact proved below that the same graph is not a one-dimensional $C^2$-submanifold of $\mathbb R^2$. Motivated by the representation  of the tangent vector $\gamma'$ in \eqref{eq:tau-def}, where the quotient of the components equals  $w_\delta/1$, we define the \emph{tangent angle with respect to the horizontal parameter} by 
\begin{align*}
\theta(x) = \begin{cases}
\arctan(w_\delta(x)),
& \text{if }x \in [0,1]\setminus\mathbb D_\delta,\\[0.3em]
\pi/2,
& \text{if }x\in \mathbb D_\delta,
\end{cases}
\qquad x\in[0,1].
\end{align*}
Equivalently, $\theta$ is obtained by composing $w_\delta$ with the continuous extension of $\arctan$ to $[0,\infty]$, where $\arctan(+\infty):=\pi/2$. Since $w_\delta:[0,1]\to[0,\infty]$ is continuous, we have $\theta\in C^0([0,1])$.
Moreover, the unit tangent field found in \cbm{1} can be written as
\[
\tau(x)=\bigl(\cos\theta(x),\sin\theta(x)\bigr),
\qquad x\in[0,1].
\]
Thus the value $\theta=\pi/2$ on $\mathbb D_\delta$ is not an additional choice, but exactly the angle corresponding to the vertical polar vector of
$Dc_u$.
On every connected component $I_{jk}$ of $[0,1]\setminus\mathbb D_\delta$, the function $w_\delta$ is $C^1$. Therefore, classically on $I_{jk}$, it follows
\[
\theta'(x)
=
\frac{\diff\ }{\diff x}\arctan(w_\delta(x))
=
\frac{w_\delta'(x)}{1+w_\delta(x)^2}.
\]
On the absolutely continuous part of $|Dc_u|$ we have
$
\diff |Dc_u|
=
\sqrt{1+w_\delta^2} \diff x.
$
\[
\Rightarrow\quad
\theta'(x) \diff x
=
\frac{w_\delta'(x)}{1+w_\delta(x)^2} \diff x
=
\frac{w_\delta'(x)}
{\bigl(1+w_\delta(x)^2\bigr)^{3/2}}
 \diff |Dc_u|, \quad \text{ on } [0,1]\setminus\mathbb D_\delta,
\]

We therefore define the \emph{candidate for the derivative of the tangent angle with
respect to arc-length} by
\[
g(x):=
\begin{cases}
\dfrac{w_\delta'(x)}
{\bigl(1+w_\delta(x)^2\bigr)^{3/2}},
& x\in[0,1]\setminus\mathbb D_\delta,\\[0.5em]
0,
& x\in\mathbb D_\delta.
\end{cases}
\]
Then, using the decomposition \eqref{eq:Dcu} $ |Dc_u|=\sqrt{1+w_\delta^2}\,\mathcal L^1+D^c u $ and the fact that $g=0$ on $\mathbb D_\delta$, we obtain
\begin{align*}
\int_{[0,1]} |g|^2\diff |Dc_u|
&= \int_{[0,1]\setminus\mathbb D_\delta} \frac{|w_\delta'(x)|^2} {\bigl(1+w_\delta(x)^2\bigr)^3}\sqrt{1+w_\delta(x)^2}\diff x
\\
&= \int_0^1 \frac{|w_\delta'(x)|^2}{\bigl(1+w_\delta(x)^2\bigr)^{5/2}}\diff x
<\infty .
\end{align*}
Thus, we conclude $g\in L^2([0,1],|Dc_u|)$ and with the push-forward identity \eqref{eq:sigmapush}
$ g\circ s^{-1}\in L^2(0,L). $

\noindent\textbf{Claim.} For every $x\in[0,1]$, one has
\begin{align}
\theta(x)
=
\frac{\pi}{2}
+
\int_{[0,x]} g\diff |Dc_u|.
\label{eq:thetaf}
\end{align}
\textit{\textbf{Proof of \eqref{eq:thetaf}:}}
Write the connected components of $[0,1]\setminus\mathbb D_\delta$ as $ I_{jk}=(\alpha_{jk},\beta_{jk})$. Since $w_\delta(x)\to+\infty$ as
$x\searrow\alpha_{jk}$ and as $x\nearrow\beta_{jk}$, we have
\[
\lim_{x\searrow\alpha_{jk}}\theta(x)
=
\lim_{x\nearrow\beta_{jk}}\theta(x)
=
\frac{\pi}{2}.
\]
On each $I_{jk}$, the function $w_\delta$ is $C^1$, and hence {on }$I_{jk}$ we obtain
\[
\theta'
=
\frac{w_\delta'}{1+w_\delta^2} \quad \text{on }I_{jk}
\quad
\Rightarrow
\quad 
g\diff |Dc_u|
=
\frac{w_\delta'}{1+w_\delta^2} \diff x
=
\theta'\diff x
\quad\text{on }I_{jk},
\]
since on $I_{jk}$ we have $\diff |Dc_u| = \sqrt{1+w_\delta^2}\diff x$.
Consequently,
\begin{align}
\int_{I_{jk}} g \diff |Dc_u|
&=
\int_{\alpha_{jk}}^{\beta_{jk}}
\frac{w_\delta'(x)}{1+w_\delta(x)^2}\diff x =
\lim_{R\nearrow\beta_{jk}}\theta(R)-\lim_{R'\searrow\alpha_{jk}}\theta(R')
=
0 .
\label{eq:int-g-over-gap-zero}
\end{align}

Now let $x\in[0,1]$. We distinguish two cases.
First assume that $x\in\mathbb D_\delta$. Since $g=0$ on $\mathbb D_\delta$, and since the connected components of $[0,1]\setminus\mathbb D_\delta$ are precisely the intervals $I_{jk}$, we obtain
\[
\int_{[0,x]} g\diff |Dc_u|
=
\sum_{I_{jk}\subset (0,x)}
\int_{I_{jk}} g\diff |Dc_u|.
\]
The series is well-defined because $g\in L^1([0,1],|Dc_u|)$, which follows
from $g\in L^2([0,1],|Dc_u|)$ and $|Dc_u|([0,1])<\infty$. By
\eqref{eq:int-g-over-gap-zero}, every summand vanishes. Hence
\[
\frac{\pi}{2}+\int_{[0,x]} g\diff |Dc_u|=\frac{\pi}{2}=\theta(x).
\]

Now assume that $x\notin\mathbb D_\delta$. Then there is a unique interval $I_{j_0k_0}=(\alpha_{j_0k_0},\beta_{j_0k_0})$ such that $x\in I_{j_0k_0}$. Since $\alpha_{j_0k_0}\in\mathbb D_\delta$, the first case gives
\begin{align*}
\int_{[0,x]} g\diff |Dc_u|
&=
\int_{[0,\alpha_{j_0k_0}]} g\diff |Dc_u|
+
\int_{(\alpha_{j_0k_0},x]} g\diff |Dc_u| \\
&=
0+\int_{\alpha_{j_0k_0}}^x
\frac{w_\delta'(t)}{1+w_\delta(t)^2}\diff t 
=
\theta(x)-\lim_{R\searrow \alpha_{j_0k_0}}\theta(R) \\
&=
\theta(x)-\frac{\pi}{2}.
\end{align*}
This proves \eqref{eq:thetaf}. 

Next, we define the \emph{tangent angle parametrized over length} by
\[
\vartheta\colon[0,L] \to [0,\pi/2], \qquad\vartheta:=\theta\circ s^{-1}.
\]
Since $s^{-1}_\#\mathcal L^1=|Dc_u|$, identity \eqref{eq:thetaf} gives for all $\ell\in[0,L]$
\[
\vartheta(\ell)
=
\frac{\pi}{2}+\int_0^\ell g(s^{-1}(r))\diff r
\quad \Rightarrow \quad
\vartheta\in W^{1,1}((0,L)),
\quad
\vartheta'=g\circ s^{-1}
\quad\text{ a.e. in }(0,L).
\]
Furthermore, since $g=0$ on $\mathbb D_\delta$ we conclude
\begin{align*}
\int_0^L |\vartheta'(\ell)|^2\diff\ell
&=
\int_0^L |g(s^{-1}(\ell))|^2\diff\ell
\overset{\eqref{eq:sigmapush}}=
\int_{[0,1]} |g(x)|^2\diff|Dc_u|\\
&=
\int_0^1
\frac{|w_\delta'(x)|^2}{(1+w_\delta(x)^2)^3}
\sqrt{1+w_\delta(x)^2}\diff x
=
\int_0^1
\frac{|w_\delta'(x)|^2}{(1+w_\delta(x)^2)^{5/2}}\diff x.
\end{align*}
The last integral is finite by Step~\cbm 5 of the construction. Hence, we obtain $\vartheta\in W^{1,2}((0,L))$.
By construction of the arc-length parametrization, it holds for all $\ell \in[0,L]$
\[
\gamma'(\ell)=\tau(s^{-1}( \ell))
=
\bigl(\cos\vartheta(\ell),\sin\vartheta(\ell)\bigr).
\]
The Sobolev chain rule therefore gives
$
\gamma'\in W^{1,2}((0,L);\mathbb R^2)
\Rightarrow  \gamma\in W^{2,2}((0,L);\mathbb R^2).
$
Moreover, for the second derivative it holds
\[
\gamma''(\ell)
=
\vartheta'(\ell)\,(-\sin\vartheta(\ell),\cos\vartheta(\ell))
\qquad\text{for a.e. }\ell\in(0,L).
\]
Since $|\gamma'|\equiv1$, the scalar curvature of the arc-length parametrized curve is
\[
\kappa_\gamma(\ell)=\vartheta'(\ell)=
\begin{cases}
\dfrac{w_\delta'(s^{-1}(\ell))}{(1+w_\delta(s^{-1}(\ell))^2)^{3/2}},
& \ell\in s\left([0,1]\setminus\mathbb D_\delta\right),\\
0,
& \ell\in s(\mathbb D_\delta).
\end{cases} 
\quad\text{for a.e. }\ell\in(0,L).
\]
Therefore, \eqref{eq:kappa-L2} follows from
\[
\int_0^L |\kappa_\gamma|^2(\ell)\diff\ell
=
\int_0^L |\vartheta'(\ell)|^2\diff\ell
=
\int_0^1
\frac{|w_\delta'(x)|^2}{(1+w_\delta(x)^2)^{5/2}}\diff x=\W^a(u).
\]


\medskip
\noindent\cb{3} \textbf{Failure of local $C^2$-regularity.}\\
We prove that the classical curvature of the smooth graph pieces becomes unbounded along sequences of points converging to Cantor points. Let $I_{jk}=(\alpha_{jk},\beta_{jk})$ be one of the removed intervals at level $j$. Its length is $(1-2\delta)\delta^{j-1}$ and  $a_{jk}$  denotes its midpoint. Define the affine rescaling
\begin{align*}
    \rho_{jk}(x)
:= \frac{x-a_{jk}}{(1-2\delta)\delta^{j-1}}+\frac12, 
\qquad x\in I_{jk}.
\end{align*}
Then $\rho_{jk}$ maps $I_{jk}$ bijectively onto $(0,1)$, with
$\rho_{jk}(\alpha_{jk})=0$ and $ \rho_{jk}(\beta_{jk})=1$.

Since $f_\delta$ is constant on every removed interval $I_{jk}$, the function
$u=U+f_\delta$ satisfies on $I_{jk}$
\begin{align*}
u'(x)=U'(x)=w_\delta(x)
= 2^{sj}+2^{sj}\Phi(\rho_{jk}(x)),
\quad 
u''(x)=w_\delta'(x)
=\frac{2^{sj}}{(1-2\delta)\delta^{j-1}}\Phi'(\rho_{jk}(x)).
\end{align*}
Thus $u$ is $C^2$ on each $I_{jk}$, and the classical signed curvature of
the graph of $u$ at points $x\in I_{jk}$ is given by
\begin{align*}
\kappa_u(x)
= \frac{u''(x)}{\bigl(1+u'(x)^2\bigr)^{3/2}}
= \frac{w_\delta'(x)} {\bigl(1+w_\delta(x)^2\bigr)^{3/2}}.
\label{eq:local-graph-curvature}
\end{align*}
This is the standard curvature formula for a Cartesian graph, see, for instance, \cite[(2.5)]{acerbi2017geometric}.

We first present two elementary estimates for $\Phi$. Later we shall choose $y=y_j\searrow0$. Therefore it is enough to consider the range
\[0<y\le\frac14
\qquad \Rightarrow\qquad
1-y\ge\frac34,
\quad
|2y-1|\ge\frac12.
\]
Hence there exists a constant $C_1>0$, depending only on $\beta$, such that
\begin{align*}
0\le \Phi(y)
= C_\beta\left(\frac{1}{y^\beta(1-y)^\beta}-4^\beta\right)
\le C_1 y^{-\beta} \qquad\text{for }0<y\le\frac14 .
\end{align*}
Moreover, there exists a constant $C_2>0$, depending only on $\beta$, such that
\begin{align*}
|\Phi'(y)|
= C_\beta\beta \frac{|2y-1|}{y^{\beta+1}(1-y)^{\beta+1}}
\ge C_2 y^{-\beta-1} \qquad\text{for }0<y\le\frac14 .
\end{align*}
We then conclude
\begin{align}
\big|\kappa_u(\rho_{jk}^{-1}(y))\big|
\succeq \frac{2^{sj}}{2^{3sj}(1-2\delta)\delta^{j-1}} \frac{y^{-\beta-1}}{(1+y^{-\beta})^3}
\succeq \delta^{-j}2^{-2sj} y^{2\beta-1} 
\qquad\text{for }0<y\le\frac14.
\label{eq:kappa-est}
\end{align}


We now show that, near every Cantor point, there are removed intervals of arbitrarily high generation. Fix
\[
\chi\in\mathbb D_\delta .
\]
Since $\chi\in\mathbb D_\delta$, it is never removed in the Cantor construction. Hence, for every $j\in\mathbb N_0$, there exists a closed interval $J_j=J_j(\chi)$ of the $j$-th generation such that
\[
\chi\in J_j,
\qquad
|J_j|=\delta^j, \qquad 
\bigcap_{j=0}^\infty J_j=\{\chi\}.
\]
and the intervals may be chosen nested: $J_{j+1}\subset J_j $. 
For $j\ge1$, let $I_j^*=I_{jk_j }$ be the middle open interval removed from $J_{j-1}$ when passing from the $(j-1)$-st to the $j$-th generation. 
Then $I_j^*\subset J_{j-1}$ and $ |I_j^*|=(1-2\delta)\delta^{j-1}$.
Since $\chi\in J_{j-1}$ and $\operatorname{diam}(J_{j-1})=\delta^{j-1}\to0$, we have
\[
\operatorname{dist}(I_j^*,\chi)\le \delta^{j-1}\to0 .
\]
Consequently, every choice of points $x_j\in I_j^*$ satisfies $x_j\to\chi$ as $j\to\infty$.


Since, by condition \eqref{eq:1dcond2}, we have $2\beta-1<0$, we define
for $j$ sufficiently large
\begin{align}
   y_j:=2^{2sj/(2\beta-1)}. \label{eq:def_yj} 
\end{align}
Then $y_j\to0$, and hence $y_j\in(0,\frac14)$ for all sufficiently large $j$. Since $ \rho_{jk_j }:I_j^*\to(0,1)$ is affine and bijective, we may choose $x_j\in I_j^*$ such that
\begin{align}
\rho_{jk_j }(x_j)=y_j\quad \Rightarrow \quad x_j\underset{j\to\infty}\longrightarrow\chi  \label{eq:def_xj}    
\end{align}
since $I_j^*\to\chi$ in the sense that $\operatorname{dist}(I_j^*,\chi)\to0$.
Moreover, by the choice of $y_j$ and by using \eqref{eq:kappa-est}, we therefore obtain
\[
|\kappa_u(x_j)|
\ge
C_4\,\delta^{-j}2^{-2sj}y_j^{2\beta-1}
=
C_4\,\delta^{-j}
\to+\infty
\qquad\text{as }j\to\infty,
\]
because $0<\delta<1$.
Thus the classical curvature of the smooth graph pieces is unbounded along the sequence $x_j\to\chi$.


Since the classical curvature of the smooth graph pieces is unbounded
along $p_j:=(x_j,u(x_j))\to p_*:=(\chi,u(\chi))$, the graph
cannot admit a regular local $C^2$-parametrization at $p_*$. Otherwise,
the curvature of such a parametrization would be continuous and hence
locally bounded, contradicting the invariance of curvature under
regular $C^2$-reparametrization on the smooth graph pieces.

 Since $\chi\in\mathbb D_\delta$ was arbitrary, the graph fails to be locally $C^2$ at every point of $c_u(\mathbb D_\delta)$, with the one-sided interpretation at its two endpoints. In particular, $c_u([0,1])$ is not a regular embedded $C^2$-curve with boundary.
\end{proof}

\begin{remark}[A variant with a $C^2$-regular curve]
\label{rem:C2-variant}
The failure of $C^2$-regularity in Theorem~\ref{thm:c1curv} can be
avoided by changing the parameter regime. Choose
\[
    \beta\in\left(\frac12,1\right),
    \qquad
    s>1,
    \qquad
    2^{-2s}<\delta<2^{-s-1}.
\]
Since $s>1$, the interval for $\delta$ is nonempty and
\[
    2^{1-3s}<2^{-2s}<\delta.
\]
Thus the summability conditions used in the construction remain
satisfied. Moreover, $\beta<1$ guarantees the integrability of the
profile, while $\beta>\frac13$ guarantees the finiteness of the local
Willmore integrals. Consequently, repeating the construction of
Theorem~\ref{thm:finit_will} with these parameters yields a function
\[
    u\in BV((0,1))\cap C^0([0,1]),
    \qquad
    D^c u\neq0,
    \qquad
    \overline{\W}(u)<\infty.
\]

We claim that the graph of $u$ is now a regular embedded $C^2$-curve. Let $I_{jk}$ be a removed interval of generation $j$ and again write $  y=\rho_{jk}(x)$. Then
\begin{align}
\left|
\kappa_u\bigl(\rho_{jk}^{-1}(y)\bigr)
\right|
&\leq
\frac{\delta}{1-2\delta}\,
\delta^{-j}2^{-2sj}
\frac{|\Phi'(y)|}
     {\bigl(1+\Phi(y)\bigr)^3}.
\label{eq:C2-curvature-upper}
\end{align}

For $0<y\leq\frac14$, the estimates used in
\eqref{eq:kappa-est} can be reversed in the appropriate places:
\begin{align*}
\Phi(y)
= &\ C_\beta\left(\frac{1}{y^\beta(1-y)^\beta}-4^\beta\right)
 \succeq y^{-\beta} &&\text{for }0<y\le\frac14 ,\\
|\Phi'(y)|
=&\ C_\beta\beta \frac{|2y-1|}{y^{\beta+1}(1-y)^{\beta+1}}
\preceq y^{-\beta-1}&&\text{for }0<y\le\frac14 .
\end{align*}
We then conclude
\begin{align*}
\big|\kappa_u(\rho_{jk}^{-1}(y))\big|
\preceq \frac{2^{sj}}{2^{3sj}(1-2\delta)\delta^{j-1}} \frac{y^{-\beta-1}}{(1+y^{-\beta})^3}
\preceq \delta^{-j}2^{-2sj} y^{2\beta-1} 
\qquad\text{for }0<y\le\frac14.
\end{align*}
By the symmetry of $\Phi$, the analogous estimate holds near $y=1$:
\[
    \frac{|\Phi'(y)|}
         {\bigl(1+\Phi(y)\bigr)^3}
    \leq C(1-y)^{2\beta-1}
    \qquad
    \text{for }\frac34\leq y<1.
\]
Since $2\beta-1>0$, we therefore have
\[
    \kappa_u\bigl(\rho_{jk}^{-1}(y)\bigr)
    \longrightarrow0
\]
as $y\searrow0$ or $y\nearrow1$. Thus the curvature on each removed interval extends continuously to its endpoints by assigning the value zero there. In particular, $h_\beta$ is bounded on $[0,1]$. It follows from \eqref{eq:C2-curvature-upper} that
\[
    \sup_{x\in I_{jk}}|\kappa_u(x)|
    \leq
    C\delta^{-j}2^{-2sj}
    =
    C\left(\frac{2^{-2s}}{\delta}\right)^j.
\]
Since $\delta>2^{-2s}$, the right-hand side converges to zero as $j\to\infty$.

 Hence the scalar curvature on $[0,1]\setminus\mathbb D_\delta$ extends continuously to $[0,1]$. Let $s:[0,1]\to[0,L]$ and $\gamma=c_u\circ s^{-1}$ be the arc-length parametrization from Theorem~\ref{thm:c1curv}. The extended scalar curvature, pulled back by $s^{-1}$, is continuous on $[0,L]$. The weak Frenet identity proved there, $\gamma''=\kappa_\gamma\nu_\gamma$ almost everywhere, therefore has a continuous right-hand side. Hence $\gamma'\in C^1([0,L];\mathbb R^2)$ and $\gamma\in C^2([0,L];\mathbb R^2)$. Since $|\gamma'|=1$, the image $c_u([0,1])$ is a regular one-dimensional $C^2$-submanifold of $\mathbb R^2$ with boundary.
\end{remark}


\begin{remark}[Interpretation of Menne's theorem in the present example]
\label{rem:menne}
The example in Thm.~\ref{thm:c1curv} should not be read as contradicting Menne's second-order rectifiability theorem in \cite[Thm.~3.6]{menne2013second}. On the contrary, it illustrates the precise measure-theoretic meaning of that theorem.

Let $V_u$ be the multiplicity-one integral varifold associated with $\Gamma_u=\gamma([0,L])$. Since $\gamma\in W^{2,2}(0,L;\mathbb R^2)$ is arc-length parametrized, $V_u$ has locally bounded first variation. More precisely, in the open set
\[
    U_0:=\mathbb R^2\setminus\{\gamma(0),\gamma(L)\}
\]
its first variation is represented by integration,
\[
    \delta V_u(X)
    =
    -\int_{\Gamma_u} H_{V_u}\cdot X\,d\|V_u\|,
    \quad
    H_{V_u}(\gamma(\ell))=\gamma''(\ell),
\quad
    \int_{\Gamma_u}|H_{V_u}|^2\,d\|V_u\|
    =
    \int_0^L|\kappa_\gamma(\ell)|^2\,d\ell
    <\infty .
\]
If the endpoints are included in the ambient open set, the first variation has in addition the two usual boundary atoms. In either case $\|\delta V_u\|$ is a Radon measure, and hence Menne's second-order rectifiability theorem applies.

Consequently, there are countably many $C^2$-curves $\Sigma_i\subset\mathbb R^2$ such that
\[
    \|V_u\|\left(
        \Gamma_u\setminus \bigcup_{i=1}^{\infty}\Sigma_i
    \right)=0 .
\]
Moreover, on these $C^2$-pieces the classical curvature agrees $\|V_u\|$-almost everywhere with the generalized mean curvature.

The point is that this conclusion is only a measure-theoretic covering statement. It does not imply that $\Gamma_u$ itself is a $C^2$-curve, nor that $\Gamma_u$ can be decomposed into countably many $C^2$-subarcs lying inside the graph. In the present construction the Cantor trace has positive $\|V_u\|$-measure, while $\Gamma_u$ is not locally a regular $C^2$-curve at any Cantor point. Hence some of the covering curves $\Sigma_i$ must pass through uncountably many points of the Cantor trace, but they cannot coincide with $\Gamma_u$ in any neighbourhood of such a point. They may therefore  cover Cantor points in the sense of Menne's theorem while leaving the graph between those points.

Thus the example shows that countable $C^2$-rectifiability of an integral varifold with $L^2$-mean curvature is much weaker than a global, or even locally compatible, $C^2$-curve structure of its support.
\end{remark}


\section{General \texorpdfstring{$p>1$}{p>1} version of the Cantor construction}
In this section we show that the mechanism behind Thm.~\ref{thm:finit_will} is not specific to the Willmore case $p=2$, but works for every exponent $p>1$, provided the weight $\psi$ decays like $t^{-\alpha}$ with $\alpha>2p-1$. This extends the examples of \cite[Remark~3.2~(iv)]{maso2009higher}, which cover only exponents $p$ close to~$1$, to the full range $p>1$. Since the weight associated with the $p$-curvature energy has decay exponent $\alpha=3p-1>2p-1$, the relaxed $L^p$-curvature energies $\overline{\W}_p$ are included for every $p>1$ (Corollary~\ref{cor:Lp-curvature-cantor}). As before, the construction admits a geometric interpretation: the associated curve has an arc-length parametrization of class $C^1\cap W^{2,p}$, and a variant of the singular profile even yields a $C^2$-regular image (Remark~\ref{rem:C1W2pcurves}). As a further application, we obtain a surface of revolution with finite Willmore energy whose profile function carries a nontrivial Cantor part (Corollary~\ref{cor:rotational-willmore-cantor}). 
\begin{thm}[Cantor parts in $X_\psi^p$ for every $p>1$]
\label{thm:cantor_part_Xp}
Let $p>1$, and let $ \psi:\mathbb R\to (0,+\infty)$  be a bounded Borel function satisfying \eqref{eq:psibed}. 
Assume moreover that there exist constants $C>0$ and
\[
    \alpha>2p-1
\quad \text{ such that } \quad
    \psi(t)\le \frac{C}{t^\alpha}
    \qquad
    \text{for all }t\ge1 .
\]
Then, for every bounded open interval $I=(a,b)$, there exists a function
\[
    u\in X_\psi^p(I)
\]
such that its distributional derivative has a nontrivial Cantor part. 
\end{thm}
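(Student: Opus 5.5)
The plan is to repeat the construction of Thm.~\ref{thm:finit_will} verbatim, replacing the Willmore-specific exponents $5/2$ and $1/3$ by the general decay exponent $\alpha$ and the exponent $p$. I work on $(0,1)$ first; an affine change of variables $x\mapsto a+(b-a)x$ then transports everything to $I=(a,b)$. This is harmless because it preserves $BV$, Cantor parts, the blow-up sets $Z^\pm$, and membership of $\Psi_p\circ(u')^a$ in $W^{1,p}$, the latter up to constant factors.

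\textbf{Construction and reduction.} Take the generalized Cantor set $\mathbb D_\delta$ and the Cantor function $f_\delta$ from Step~\cbm2. Take the profile $\Phi$ from Step~\cbm3 with an exponent $\beta\in(0,1)$ to be fixed. On each removed interval $I_{jk}$ set
\[
w_\delta = 2^{sj}\bigl(1+\Phi(\rho_{jk})\bigr),
\]
set $w_\delta=+\infty$ on $\mathbb D_\delta$, and define $u:=U+f_\delta$ with $U(x)=\int_0^x w_\delta$. Exactly as in Steps~\cbm3 and~\cbm6, $w_\delta$ is continuous with values in $[0,\infty]$, and $\int_0^1 w_\delta<\infty$ provided $2^{s+1}\delta<1$. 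Moreover
\[
Du=w_\delta\mathcal L^1+D^cf_\delta,
\]
with $D^s u=D^cf_\delta\ge0$ nontrivial and concentrated on $\mathbb D_\delta=Z^+[(u')^a]$, while $(D^su)^-=0$. Hence it only remains to show $\Psi_p\circ w_\delta\in W^{1,p}((0,1))$. The argument of Step~\cbm4 applies unchanged with $\Psi_2$ replaced by $\Psi_p$: the values at Cantor points equal $\Psi_p(\infty)=M$, and the integrals over each gap vanish. So everything reduces to proving
\[
\sum_{j,k}E_{jk}<\infty,\qquad E_{jk}:=\int_{I_{jk}}\psi(w_\delta)\,|w_\delta'|^p\,\diff x .
\]

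\textbf{Local estimate.} Since $w_\delta\ge 2^{sj}\ge1$ on $I_{jk}$, the hypothesis gives $\psi(w_\delta)\le C w_\delta^{-\alpha}$. We have $w_\delta'=2^{sj}\bigl((1-2\delta)\delta^{j-1}\bigr)^{-1}\Phi'(y)$, and the substitution $y=\rho_{jk}(x)$ contributes $\diff x=(1-2\delta)\delta^{j-1}\diff y$. Together these yield
\[
E_{jk}\preceq \delta^{-j(p-1)}\,2^{sj(p-\alpha)}\int_0^1\frac{|\Phi'(y)|^p}{(1+\Phi(y))^{\alpha}}\,\diff y .
\]
Near $y=0$, and symmetrically near $y=1$, the integrand behaves like $y^{-p(\beta+1)+\alpha\beta}$. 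It is therefore integrable iff $\beta(\alpha-p)>p-1$, that is, iff $\beta>\frac{p-1}{\alpha-p}$. Note that $\alpha>2p-1>p$. Compatibility with $\beta<1$, which is needed for $\int\Phi<\infty$, is exactly the condition $\frac{p-1}{\alpha-p}<1\iff\alpha>2p-1$. This is precisely where the hypothesis enters, and I fix $\beta\in\bigl(\frac{p-1}{\alpha-p},1\bigr)$. For $p=2$, $\alpha=5$ this recovers $\beta>\frac13$.

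\textbf{Choice of parameters (main obstacle).} Summing over the $2^{j-1}$ intervals of generation $j$, convergence requires $2\,\delta^{1-p}2^{s(p-\alpha)}<1$, i.e.
\[
\delta>2^{\frac{1+s(p-\alpha)}{p-1}} .
\]
Together with the integrability condition $\delta<2^{-s-1}$, a valid $\delta$ exists iff
\[
\frac{1+s(p-\alpha)}{p-1}<-s-1 .
\]
Multiplying out, this is equivalent to $s(\alpha+1-2p)>p$. Again using $\alpha>2p-1$, this holds for every $s>\frac{p}{\alpha+1-2p}$. Then $\delta<2^{-s-1}<\frac12$ automatically, so (C1) is also satisfied.

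The delicate point I expect is exactly this balancing of the three competing constraints: the $L^1$-bound on $w_\delta$, the local integrability of the profile, and the geometric summability over generations. All three are governed by the single gap $\alpha-(2p-1)>0$. Once $\beta$, $s$ and $\delta$ are fixed accordingly, Step~\cbm4 yields $\Psi_p\circ w_\delta\in W^{1,p}$. Hence $u\in X^p_\psi((0,1))$ with $|D^cu|((0,1))=1$, and rescaling to $I$ finishes the proof.
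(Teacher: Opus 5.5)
Your proposal is correct and coincides with the paper's proof. It uses the same profile $\Phi$, the same slopes $w_\delta$, the same reduction via Step 4 to the summability of the local integrals, and exactly the same parameter regime $\beta\in\bigl(\frac{p-1}{\alpha-p},1\bigr)$, $s>\frac{p}{\alpha+1-2p}$, $2^{\frac{1-s(\alpha-p)}{p-1}}<\delta<2^{-s-1}$.

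One small point concerns the transfer to $(a,b)$, where the paper is equally brief. A purely horizontal rescaling puts a factor $1/(b-a)$ inside $\Psi_p$, so this is not literally ``up to constant factors''. It is cleaner to use $\tilde u(t):=(b-a)\,u\bigl(\frac{t-a}{b-a}\bigr)$, which leaves the slope density unchanged pointwise.
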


\begin{proof} 
According to the definition in \cite{maso2009higher}, for $p>1$ the class $X_\psi^p$ is described in terms of the absolutely continuous density 
$(u')^a=\frac{dDu}{d\mathcal L^1}$ by requiring
\[
\Psi_p\bigl((u')^a\bigr)\in W^{1,p},
\]
where $\Psi_p$ is defined in \eqref{eq:psi_p}. Moreover, the singular part $D^s u$ has to be concentrated on the set where the chosen continuous representative of $(u')^a$ takes the values $\pm\infty$. In the construction below only the value $+\infty$ occurs.

It is enough to consider $I=(0,1)$. The construction on an arbitrary bounded open interval $I=(a,b)$ is obtained by applying the same argument after an affine change of variables in the domain. All estimates remain unchanged up to constants depending only on
$|I|$.

In this proof we only highlight the changes that have to be made in the proof of Thm.~\ref{thm:finit_will}. In Step~\cbm 1, we replace the definitions of $M$ and $\Psi_2$ by 
\[
    M_p:=\int_{-\infty}^{+\infty}\psi(t)^{1/p}\diff t<+\infty,
    \qquad
    \Psi_p(t):=\int_{-\infty}^{t}\psi(s)^{1/p}\diff s .
\]
The finiteness of $M_p$ is part of assumption \eqref{eq:psibed}. The additional decay estimate is used only to control the positive tail and to obtain the quantitative bounds required for the Cantor-profile construction.  Indeed, since $\alpha>2p-1>p$, one has
\[
\int_1^\infty \psi(t)^{1/p} \diff t
\le
C^{1/p}\int_1^\infty t^{-\alpha/p}\diff t
<\infty .
\]
Then $\Psi_p$ extends continuously to $[-\infty,+\infty]$, with $\Psi_p(-\infty)=0$ and $\Psi_p(+\infty)=M_p$. The Cantor construction itself  in Step~\cbm 2 remains unchanged. In Step~\cbm 3, we replace the condition on $\beta$ in \eqref{eq:1dcond2} by 
\begin{align}
       \beta\in
    \left(\frac{p-1}{\alpha-p},1\right).
    \label{eq:1dcond2p}\tag{C2p}
\end{align}
This interval is nonempty precisely because $\alpha>2p-1$. The definition of function $\Phi$ also remains completely unchanged. The condition \eqref{eq:choice_s} on $s>1$ is replaced by
\begin{align}
   s>\frac{p}{\alpha-2p+1} \quad \Longleftrightarrow\quad  2^{\frac{1-s(\alpha-p)}{p-1}}<2^{-s-1},
\label{eq:choice_sp}\tag{C3p}
\end{align}
which is later important for the choice of $\delta$. The definitions of $\Phi_{jk}$ and $w_\delta$ are not altered and the proof that $w_\delta\in L^1(0,1)$ remains the same:
\begin{align*}
    \int_0^1 w_\delta(x)\diff x
    = 2^{s+1}\frac{1-2\delta}{1-2^{s+1}\delta}<\infty
\end{align*}
provided the unchanged condition \eqref{eq:condition} is valid:
\begin{align}
    \delta<2^{-s-1}
    \quad\Longleftrightarrow\quad
    2^{s+1}\delta<1.
    \tag{C4}
\end{align}

Moreover, $w_\delta$ is continuous as a function with values in $[0,+\infty]$. At endpoints of removed intervals this follows from the divergence of $\Phi$ at $0$ and $1$. If $\chi\in\mathbb D_\delta$ is not such an endpoint and $x_n\to\chi$ with $x_n\notin\mathbb D_\delta$, then $x_n\in I_{j_n k_n}$ with $j_n\to\infty$. Hence
\[
w_\delta(x_n)\ge 2^{s j_n}\to+\infty.
\]
Since $w_\delta=+\infty$ on $\mathbb D_\delta$, this proves continuity at
$\chi$.


In Steps~\cbm 4 and~\cbm 5, we argue as before, replacing $\Psi_2$ by $\Psi_p$ throughout. On each connected component of $(0,1)\setminus\mathbb D_\delta$, the function $\Psi_p\circ w_\delta$ is smooth. We therefore define the natural candidate for its weak derivative on $(0,1)$ by
\[
    G_\delta(x):=
    \begin{cases}
        (\Psi_p\circ w_\delta)'(x),
        & x\in[0,1]\setminus\mathbb D_\delta,\\
        0,
        & x\in\mathbb D_\delta.
    \end{cases}
\]
Here we have to prove that 
\[
    \Psi_p\circ w_\delta\in W^{1,p}((0,1)), \quad
(\Psi_p\circ w_\delta)'=G_\delta
\quad\text{a.e. in }(0,1).
\]
Since $w_\delta$ is continuous with values in $[0,+\infty]$ and $\Psi_p$ is continuous on $[0,+\infty]$, we have \[ \Psi_p\circ w_\delta\in C^0([0,1])\subset L^p((0,1)). \]
Furthermore,  $\psi$ is bounded and Borel measurable. Therefore, $\Psi_p$ is locally absolutely continuous on $\mathbb R$, with
\[
\Psi_p'(t)=\psi(t)^{1/p}
\qquad\text{for a.e. }t\in\mathbb R.
\]
Moreover, $w_\delta\in C^1_{\mathrm{loc}}(I_{jk})$. Hence $\Psi_p\circ w_\delta\in W^{1,1}_{\mathrm{loc}}(I_{jk})$ is locally absolutely continuous on $I_{jk}$, and on each interval $I_{jk}$ the one-dimensional chain rule gives 
\begin{align*}
    (\Psi_p\circ w_\delta)'(x) &= (\Psi_p\circ \Phi_{jk})'(x)=\Psi_p'\big(\Phi_{jk}(x)\big) \cdot \Phi'_{jk}(x)\\
     &= \frac{{2^{sj}}}{(1-2\delta)\delta^{j-1}}\cdot\psi^{1/p}\big(\Phi_{jk}(x) \big)\cdot \Phi' \left(  \frac{x-a_{jk}}{(1-2\delta)\delta^{j-1}}  + \frac{1}{2}\right) 
\end{align*}
 instead of \eqref{eq:psi_abl}. 
Since $\Phi_{jk}(x) \ge 2^{sj}\ge1$, the decay assumption on $\psi$ gives
\[
    \psi\big(\Phi_{jk}(x) \big)
    \le
    C\big(\Phi_{jk}(x) \big)^{-\alpha}.
\]
Using the explicit form of $w_\delta$ on $I_{jk}$, the decay assumption on $\psi$, and the change of variables $y=\rho_{jk}(x)$, we obtain
\[
\begin{aligned}
    \int_{I_{jk}} \big|(\Psi_p\circ w_\delta)'(x)\big|^p\diff x
    &\preceq  \frac{{2^{sj(p-\alpha)}}}{(1-2\delta)^p\delta^{(j-1)p}}\cdot
    \int_{I_{jk}} \frac{\left| \Phi' \left(  \frac{x-a_{jk}}{(1-2\delta)\delta^{j-1}}  + \frac{1}{2}\right)\right|^p}{\left|1+ \Phi \left(  \frac{x-a_{jk}}{(1-2\delta)\delta^{j-1}}  + \frac{1}{2}\right)\right|^\alpha}\diff x                                      \\
      &\preceq   \frac{{2^{sj(p-\alpha)}}}{(1-2\delta)^{p-1}\delta^{(j-1)(p-1)}}\cdot
    \int_{0}^1 \frac{\left| \Phi' \left( y\right)\right|^p}{\left|1+ \Phi \left( y\right)\right|^\alpha}\diff y                                      \\
    &\underset{\eqref{eq:condition}}{\mathclap{\overset{2\delta<2^{-s}}\preceq} } \quad 
   \frac{2^{sj(p-\alpha)}}{\delta^{j(p-1)}}
    \int_0^1 
    y^{-p(\beta+1)}y^{\alpha\beta} \diff y.
\end{aligned}
\]
The last integral is integrable if and only if $-p(\beta+1)+\alpha\beta>-1$.
This is exactly our choice of $\beta$ in \eqref{eq:1dcond2p}. 
Consequently,
\[
    \int_{I_{jk}} \big|(\Psi_p\circ w_\delta)'(x)\big|^p\diff x
    \preceq
    (2^{s(p-\alpha)}\delta^{1-p})^j.
\]
Summing over all removed intervals, we obtain
\begin{align*}
    \int_{[0,1]\setminus\mathbb D_\delta}
    \big|(\Psi_p\circ w_\delta)'(x)\big|^p\diff x
    &=  \sum_{j=1}^\infty \sum_{k=1}^{2^{j-1}} \int_{I_{jk}} \big|(\Psi_p\circ w_\delta)'(x)\big|^p\diff x   \preceq \sum_{j=1}^\infty \left( 2^{1+s(p-\alpha)}\delta^{1-p} \right)^j .
\end{align*}
Therefore, the last geometric series converges provided that we replace \eqref{eq:condition2} by 
\begin{align}
2^{1+s(p-\alpha)}\delta^{1-p}<1
\quad\Longleftrightarrow\quad
\delta> 2^{\frac{1-s(\alpha-p)}{p-1}}.
\label{eq:condition2p}\tag{C5p}
\end{align}
By the choice of $s$ in \eqref{eq:choice_sp}, this condition is compatible with the upper bound $\delta<2^{-s-1}$. Hence we may choose
\[
2^{\frac{1-s(\alpha-p)}{p-1}}
<
\delta
<
2^{-s-1}.
\]
Thus, $ (\Psi_p\circ w_\delta)'\in L^p([0,1]\setminus\mathbb D_\delta). $ 

It remains to characterize \(G_\delta\) as the weak derivative of \(\Psi_p\circ w_\delta\). Since \(\Psi_p\circ w_\delta\) is locally absolutely continuous on every removed interval $I_{jk}$ and tends to \(M_p\) at both endpoints of each such interval, the same argument as in Step~\cbm 4 of Thm.~\ref{thm:finit_will} gives
\[
    (\Psi_p\circ w_\delta)(x)
    =
    M_p+\int_0^x G_\delta(\xi)\,\diff\xi
    \qquad\text{for all }x\in[0,1].
\]
Together with \(G_\delta\in L^p((0,1))\) similar to Step~\cbm 5 of Thm.~\ref{thm:finit_will}, this yields
\[
    \Psi_p\circ w_\delta\in W^{1,p}((0,1)),
    \qquad
    (\Psi_p\circ w_\delta)'=G_\delta
    \quad\text{a.e.}
\]

As in Step~\cbm{6} of the proof of Thm.~\ref{thm:finit_will}, we define for all $x\in[0,1]$
\[
   u(x):=U(x)+f_\delta(x) \quad \text{ with }\quad U(x):=\int_0^x w_\delta(t) \diff t
\]
where $f_\delta$ is the Cantor function associated with $\mathbb D_\delta$. Since $w_\delta\in L^1((0,1))$, we have $U\in W^{1,1}((0,1))$. Moreover, $f_\delta$ is continuous and nondecreasing. Hence, it follows that $ u\in BV((0,1))\cap C^0([0,1])$.  
In particular,
\[
(u')^a=w_\delta
\qquad\mathcal L^1\text{-a.e.},
\quad \text{
and
}\quad
D^s u=D^c u=D^c f_\delta.
\]
All pointwise statements about the blow-up set of \((u')^a\) are understood with respect to the chosen continuous representative \(w_\delta\). Since $w_\delta(x)=+\infty \ \Longleftrightarrow\  x\in\mathbb D_\delta$, the chosen continuous representative of $(u')^a$ satisfies
\[
    Z^+[(u')^a]=\mathbb D_\delta\cap(0,1),
    \qquad
    Z^-[(u')^a]=\emptyset.
\]
The singular measure $ D^s u$ is positive and supported on $\mathbb D_\delta$. Hence its positive part is concentrated on $Z^+[(u')^a]$, while its negative part is zero and therefore
trivially concentrated on $Z^-[(u')^a]$.

Together with
$ \Psi_p\bigl((u')^a\bigr)
    =
    \Psi_p\circ w_\delta
    \in W^{1,p}((0,1))
$ it follows that $
    u\in X_\psi^p(0,1).
$
Additionally, $Du$ has a nontrivial Cantor part. The analogue of Step~\cbm{7} of Thm.~\ref{thm:finit_will} is not needed here, since the corresponding structural result is already contained in \cite[Thm.~3.4.]{maso2009higher}. This concludes the proof.
\end{proof}

\begin{corollary}[Cantor parts for \texorpdfstring{$L^p$}{Lp}-curvature energies]
\label{cor:Lp-curvature-cantor}
Let $p>1$.  Then there exists a function $ u\in BV((a,b))\cap C^0([a,b])$ such that
\[
    D^c u\neq 0
    \qquad\text{and}\qquad
    \overline{\W}_p(u)<\infty .
\]
In particular, finite relaxed $L^p$-curvature energy does not imply that $u\in SBV((a,b))$. 
\end{corollary}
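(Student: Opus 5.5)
The plan is to reduce the corollary to Thm.~\ref{thm:cantor_part_Xp}, then transfer finiteness from the augmented functional $\overline{\mathcal F}_p$ to the pure $\overline{\W}_p$ exactly as in Step~\cbm7 of Thm.~\ref{thm:finit_will}.

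First I identify the weight. For $u\in W^{2,p}((a,b))$ we have $\W_p(u)=\int_a^b \psi_p(u')|u''|^p\diff x$ with
\[
\psi_p(t):=\bigl(1+t^2\bigr)^{-(3p-1)/2}.
\]
This weight is bounded, positive and Borel, and it is bounded below by a positive constant on every compact set. Moreover $\psi_p(t)^{1/p}=(1+t^2)^{-(3p-1)/(2p)}$ is integrable on $\mathbb R$, because $(3p-1)/p>1$ for $p>1$. Hence \eqref{eq:psibed} holds. For $t\ge1$ we also have $\psi_p(t)\le t^{-(3p-1)}$, so the decay hypothesis holds with $\alpha=3p-1>2p-1$. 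With this $\psi_p$, the functional $\mathcal F_p$ is $\int_a^b|u'|\diff x+\W_p(u)$ on $W^{2,p}$.

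Next I apply Thm.~\ref{thm:cantor_part_Xp} on $I=(a,b)$. This gives $u=U+f_\delta$, suitably transported by an affine change of variables, with $u\in X_{\psi_p}^p((a,b))$ and $D^cu=D^cf_\delta\neq0$. From the construction, $U$ is absolutely continuous and $f_\delta$ is continuous, so $u\in BV((a,b))\cap C^0([a,b])$. By Thm.~\ref{thm:maso}, $\overline{\mathcal F}_p(u)=|Du|((a,b))+\int_a^b|v'|^p\diff x$ with $v=\Psi_p\circ(u')^a\in W^{1,p}$, and this value is finite.

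Finally I repeat Step~\cbm7. Take a recovery sequence $q_n\in W^{2,p}$ with $q_n\to u$ in $L^1$ and $\mathcal F_p(q_n)\to\overline{\mathcal F}_p(u)$. Lower semicontinuity of the total variation under $L^1$ convergence gives $|Du|((a,b))\le\liminf_n\int_a^b|q_n'|\diff x$. Therefore
\[
\limsup_n\W_p(q_n)\le\lim_n\mathcal F_p(q_n)-\liminf_n\int_a^b|q_n'|\diff x\le\int_a^b|v'|^p\diff x<\infty .
\]
Since $q_n$ is an admissible sequence in the definition of $\overline{\W}_p$, this yields $\overline{\W}_p(u)<\infty$. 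The conclusion $u\notin SBV$ follows immediately from $D^cu\neq0$.

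The only point needing care is the affine transfer from $(0,1)$ to $(a,b)$. Rescaling $x$ changes $u'$ by a constant factor, so I will instead compose the construction with the affine map and check that the conditions \eqref{eq:1dcond2p}--\eqref{eq:condition2p} are unaffected: the background slopes $2^{sj}$ are only multiplied by a fixed constant, and this changes the estimates only by constants. Alternatively, one can note that the characterization in Thm.~\ref{thm:maso} holds on any bounded interval, so no real obstacle remains.
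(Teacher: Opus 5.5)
Your proposal is correct and follows the paper's proof. You identify the weight $\psi_p(t)=(1+t^2)^{(1-3p)/2}$ with decay exponent $\alpha=3p-1>2p-1$, apply Thm.~\ref{thm:cantor_part_Xp}, and pass from $\overline{\mathcal F}_p(u)<\infty$ to $\overline{\W}_p(u)<\infty$ via a recovery sequence. The differences are cosmetic: your Step~7-style subtraction of the total variation gives the slightly sharper bound $\overline{\W}_p(u)\le\int_a^b|v'|^p$, where the paper only uses $\W_p\le\mathcal F_p$. Also, your final paragraph on the affine transfer is not needed, since Thm.~\ref{thm:cantor_part_Xp} is already stated for every bounded interval.
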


\begin{proof} For a classical $u$ it holds
\[
    \W_p(u)
=
    \int_a^b \psi_p(u')|u''|^p \diff x
\quad\text{ 
with }\quad
    \psi_p(t):=(1+t^2)^{\frac{1-3p}{2}} .
\]
The function $\psi_p$ is bounded, positive, and satisfies the structural
condition \eqref{eq:psibed}. Moreover,
\[
    \psi_p(t)
    \le C t^{-(3p-1)}
    \qquad\text{for }t\ge1 .
\]
Therefore the decay exponent is $\alpha=3p-1>2p-1$.

Thm.~\ref{thm:cantor_part_Xp}, applied with
$\psi=\psi_p$, yields a function $u\in X_{\psi_p}^p((a,b))$ with nontrivial Cantor part $D^c u\neq0$. By the relaxation theorem for $\mathcal F_p$, the membership $u\in X_{\psi_p}^p((a,b))$ implies $  \overline{\mathcal F}_p(u)<\infty $. Since, for smooth functions, $ \W_p(u)\le \mathcal F_p(u)$, the same recovery sequence also gives $ \overline{\W}_p(u)<\infty . $ This proves the claim.
\end{proof}
\begin{remark}[Geometric interpretation]
\label{rem:C1W2pcurves}
For the specific curvature weight \(\psi_p(t)=(1+t^2)^{(1-3p)/2}\), the function constructed in Corollary~\ref{cor:Lp-curvature-cantor} admits the same geometric interpretation as in Theorem~\ref{thm:c1curv}. The Cartesian curve $ c_u:[0,1]\to\mathbb R^2$, associated with the function $u$ constructed above admits an arc-length parametrization $\gamma:[0,L]\to\mathbb R^2 $ such that $\gamma\in C^1([0,L];\mathbb R^2)\cap W^{2,p}((0,L);\mathbb R^2)$. 
Moreover, the scalar curvature of the arc-length parametrized curve belongs
to $L^p((0,L))$:
\[
   \overline{\W}_p(u)\le  \int_0^L |\kappa_\gamma(\ell)|^p \diff \ell<\infty .
\]

Furthermore, as in Remark~\ref{rem:C2-variant}, one may obtain a $C^2$-regular image by a different choice of the singular profile. Indeed,  the decay exponent is $ \alpha=3p-1 $. The condition \eqref{eq:1dcond2p} in Thm.~\ref{thm:cantor_part_Xp} requires $\beta\in   \left(\frac{p-1}{\alpha-p},1\right)$.
Since now
\[  \frac{p-1}{\alpha-p}  =
    \frac{p-1}{2p-1}  <   \frac12 ,\]
we may choose instead
\[ \beta\in\left(\frac12,1\right). \]
If, in addition, the choice of $\delta$ is strengthened to
\[  2^{-2s}<\delta<2^{-s-1},  \qquad s>1, \]
then the previous admissibility condition remains satisfied, because
\[ 2^{\frac{1-s(\alpha-p)}{p-1}}  = 2^{-2s-\frac{s-1}{p-1}} <  2^{-2s}.\]
With these choices, the curvature on the smooth graph pieces extends continuously across the Cantor set by setting it equal to zero there. Consequently the graph $c_u([0,1])$ is a one-dimensional $C^2$-submanifold of $\mathbb R^2$ with boundary.
\end{remark}

\begin{corollary}[Cantor parts for Willmore surfaces of revolution]
\label{cor:rotational-willmore-cantor}
There exists \(r\in BV((a,b))\cap C^0([a,b])\), \(r\ge r_0>0\), with \(D^c r\neq0\), such that the surface of revolution generated by the arc-length parametrized graph of \(r\) 
\[
    \Sigma_r
    :=
    \bigl\{(x,r(x)\cos\varphi,r(x)\sin\varphi)
    :x\in(a,b),\ \varphi\in[0,2\pi)\bigr\},
\]
has finite Willmore energy.
\end{corollary}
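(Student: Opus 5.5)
We take the $C^2$-variant of Remark~\ref{rem:C2-variant} on $(a,b)$, after an affine change of variables, so that the parameters satisfy $\beta\in(\frac12,1)$, $s>1$ and $2^{-2s}<\delta<2^{-s-1}$. This gives $u\in BV\cap C^0$ with $D^cu=D^cf_\delta\neq0$. By Theorem~\ref{thm:c1curv} and Remark~\ref{rem:C2-variant}, its arc-length parametrization $\gamma=(\gamma_1,\gamma_2)\in C^2([0,L];\mathbb R^2)$ has bounded curvature $\kappa_\gamma$, and $\int_0^L\kappa_\gamma^2\,\diff\ell=\W^a(u)<\infty$. Since $u$ is continuous on the compact interval, it is bounded. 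We set $r:=u+c$ with $c>\|u\|_\infty+1$, so that $r\ge r_0:=1>0$. Adding a constant changes neither $Du$ nor the curvature, so $D^cr=D^cu\neq0$ and the profile curve $\tilde\gamma:=\gamma+(0,c)$ is still $C^2$, arc-length parametrized, and has $\tilde\gamma_2\ge1$.

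Next we parametrize $\Sigma_r$ by $X(\ell,\varphi)=(\tilde\gamma_1(\ell),\tilde\gamma_2(\ell)\cos\varphi,\tilde\gamma_2(\ell)\sin\varphi)$. Since $|\tilde\gamma'|=1$ and $\tilde\gamma_2\ge1$, this is a regular $C^2$-immersion, and its image coincides with $\Sigma_r$ because $\gamma(s(x))=c_u(x)$ by Theorem~\ref{thm:c1curv}. We then use the standard formulas for surfaces of revolution with arc-length profile. The principal curvatures are $k_1=\kappa_{\tilde\gamma}$ (meridian, up to sign) and $k_2=\tilde\gamma_1'/\tilde\gamma_2$ (parallel). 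The area element is $\diff A=\tilde\gamma_2\,\diff\ell\,\diff\varphi$. Hence
\[
\W(\Sigma_r)=\int_\Sigma H^2\diff A=\frac{2\pi}{4}\int_0^L\Bigl(\kappa_{\tilde\gamma}+\frac{\tilde\gamma_1'}{\tilde\gamma_2}\Bigr)^2\tilde\gamma_2\,\diff\ell
\le \pi\int_0^L\Bigl(\kappa_\gamma^2\,\tilde\gamma_2+\frac{1}{\tilde\gamma_2}\Bigr)\diff\ell .
\]
This bound is at most $\pi\bigl(\|\tilde\gamma_2\|_\infty\W^a(u)+L\bigr)<\infty$, since $L=|Dc_u|([0,1])<\infty$, $\tilde\gamma_2\le\|u\|_\infty+c$, and $\tilde\gamma_2\ge1$. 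The convention for $H$ only affects the constant.

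The main point requiring care is the justification of the curvature formula at the Cantor trace. There the tangent is vertical, i.e.\ $\tilde\gamma_1'=0$, and the graph representation $x\mapsto r(x)$ breaks down, so the classical graph formulas for $H$ cannot be used. I would handle this by working entirely in the arc-length parametrization, where the $C^2$-regularity from Remark~\ref{rem:C2-variant} makes the classical formulas for $k_1,k_2$ valid everywhere. As a fallback, the $W^{2,2}$ profile from Theorem~\ref{thm:c1curv} alone suffices: the immersion is then $C^1\cap W^{2,2}$, the formulas hold a.e., and the same integral bound follows. This shows the $C^2$-variant is not strictly needed, although it yields the cleaner statement of a $C^2$-surface.
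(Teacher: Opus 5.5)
Your proposal is correct and follows the paper's route. Like the paper, you take the $C^2$-variant of Remark~\ref{rem:C2-variant} and shift it vertically to obtain a positive profile. You then parametrize $\Sigma_r$ through the arc-length profile with $k_1=\kappa_\gamma$ and $k_2=\gamma_1'/\gamma_2$, and bound the meridian term by $\|\rho\|_\infty\int_0^L\kappa_\gamma^2$ and the rotational term by a multiple of $L/r_0$. The differences are cosmetic: you shift by $c>\|u\|_\infty+1$ where the paper uses $r_0+u$ (relying on $u\ge 0$), and you expand $H^2$ via $(k_1+k_2)^2\le 2k_1^2+2k_2^2$ rather than citing a convention-independent bound $|H|^2\le C(k_1^2+k_2^2)$.
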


\begin{proof}
Let $u$ be the function obtained from the construction in the case $p=2$, using the \(C^2\)-variant described in Remark~\ref{rem:C2-variant}, so that the graph $c_u([a,b])$ is a one-dimensional $C^2$-submanifold of $\mathbb R^2$. Moreover, $D^c u\neq0 $. Choose $r_0>0$ and set $r:=r_0+u>0$. Since adding a constant does not change the distributional derivative, we have $D^c r=D^c u\neq0 $. Also, $c_r([a,b])$ is just the vertical translate of $c_u([a,b])$, and hence it is again a one-dimensional $C^2$-submanifold of $\mathbb R^2$.

The arc-length parametrization of $c_r([a,b])$ is denoted by
\[ \gamma:[0,L]\to\mathbb R^2, 
    \qquad
    \gamma(\ell)=(x(\ell),\rho(\ell)).
\]
Since the generating curve is $C^2$, the map
\[
    F:[0,L]\times[0,2\pi]\to\mathbb R^3,
    \qquad
    F(\ell,\varphi)
    :=
    \bigl(
        x(\ell),
        \rho(\ell)\cos\varphi,
        \rho(\ell)\sin\varphi
    \bigr)
\]
is $C^2$. Therefore $ |\partial_\ell F|=1,\ |F_\varphi|=\rho\ge r_0,\ F_\ell\cdot F_\varphi=0$, $F$ is a regular $C^2$-parametrization of the surface of revolution. The principal curvatures are, up to the choice of orientation,
\[
    k_1(\ell,\varphi)=\kappa_\gamma(\ell),
    \qquad
    k_2(\ell,\varphi)=\frac{x'(\ell)}{\rho(\ell)}.
\]
Indeed, $k_1$ is the curvature of the meridian curve, while $k_2$ is the rotational curvature. Therefore
\[
    \int_{\Sigma_r} k_1^2 \diff A
    =
    2\pi
    \int_0^L
    \rho(\ell)\,|\kappa_\gamma(\ell)|^2 \diff\ell
    \le
    2\pi \|\rho\|_{L^\infty((0,L))}
    \int_0^L |\kappa_\gamma(\ell)|^2 \diff\ell
    <\infty .
\]
Here we used that $\rho$ is continuous on the compact interval $[0,L]$ and that $\kappa_\gamma\in L^2((0,L))$.

For the rotational principal curvature we obtain
\[     \int_{\Sigma_r} k_2^2 \diff A
    =     2\pi \int_0^L \frac{x'(\ell)^2}{\rho(\ell)} \diff\ell 
    \le   \frac{2\pi}{r_0}\int_0^L x'(\ell)^2 \diff\ell 
    \le   \frac{2\pi L}{r_0} 
    <\infty .
\]
Since the mean curvature $H$ is either $k_1+k_2$ or $\frac12(k_1+k_2)$, depending on convention, we have in either case $|H|^2\le C\,(k_1^2+k_2^2) $ with a universal constant $C>0$. Hence $\W(\Sigma_r)<\infty$.
\end{proof}

\textbf{Acknowledgements.}
Part of this work was carried out during the second author's doctoral studies under the supervision of the first author. The second author would also like to thank Matteo Novaga and Matthias Röger for their thorough evaluation of the doctoral thesis and for their insightful comments and suggestions.

\section{Appendix}

\begin{lemma}\label{lem:Cantor}
Let $\mathbb D_\delta$ and $(f_\ell)_{\ell\in\mathbb N}$ be defined as in Step \cbm 2
of Thm.~\ref{thm:finit_will}. Then the following assertions hold.

There exists a function $f_\delta\in C^0([0,1])$ with $f_\delta(0)=0, f_\delta(1)=1$ such that
\[
f_\ell\to f_\delta
\qquad\text{uniformly on }[0,1].
\]
and moreover
\[
Df_\delta= D^c f_\delta, \qquad
|D^c f_\delta|((0,1))=1.
\]
where $D^c f_\delta$ is a nonnegative Cantor measure supported on $\mathbb D_\delta$.
\end{lemma}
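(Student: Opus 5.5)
The plan is to show that $(f_\ell)$ is uniformly Cauchy, which gives the limit $f_\delta$ directly, and then to read off the structure of $Df_\delta$ from monotonicity and from the fact that $f_\delta$ is locally constant off $\mathbb D_\delta$.

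\textbf{Uniform convergence.} First compare $f_{\ell+1}$ with $f_\ell$. On every interval $I_{jk}$ with $j\le\ell$, both functions equal $\frac{2k-1}{2^j}$ by \eqref{eq:f_ell_formula}. The remaining set $C_\ell$ consists of $2^\ell$ closed intervals $J$ of length $\delta^\ell$. On each such $J$ the two functions agree at both endpoints, because both increase by exactly $2^{-\ell}$ across $J$: indeed $\int_J g_\ell=\int_J g_{\ell+1}=2^{-\ell}$, since $\mathcal L^1(J\cap C_{\ell+1})=2\delta^{\ell+1}$. Both functions are nondecreasing on $J$, so $|f_{\ell+1}-f_\ell|\le 2^{-\ell}$ there. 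Summing the geometric bound shows that $(f_\ell)$ is uniformly Cauchy. Its uniform limit $f_\delta$ is therefore continuous and nondecreasing, and it satisfies $f_\delta(0)=0$ and $f_\delta(1)=1$ because each $f_\ell$ does.

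\textbf{Structure of the derivative.} Since $f_\delta$ is nondecreasing and continuous, $Df_\delta$ is a nonnegative, atomless Radon measure. By the one-dimensional structure theorem recalled in Section~\ref{sec:preliminaries}, its total mass is
\[
|Df_\delta|((0,1))=\operatorname{pV}(f_\delta,(0,1))=f_\delta(1)-f_\delta(0)=1 .
\]
Each $I_{jk}$ is an open interval on which $f_\delta$ is constant, since this holds for all $f_\ell$ with $\ell\ge j$. Hence $Df_\delta\big((0,1)\setminus\mathbb D_\delta\big)=0$, so $Df_\delta$ is concentrated on $\mathbb D_\delta$. Because $\mathcal L^1(\mathbb D_\delta)=0$, the measure is singular with respect to $\mathcal L^1$, which gives $(f_\delta')^a=0$. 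It has no atoms by continuity, so $D^jf_\delta=0$. Therefore
\[
Df_\delta=D^cf_\delta,\qquad |D^cf_\delta|((0,1))=1,
\]
and $D^cf_\delta$ is supported in the closed set $\mathbb D_\delta$.

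\textbf{Main obstacle.} The only step requiring care is the endpoint agreement of $f_{\ell+1}$ and $f_\ell$ on each component of $C_\ell$, that is, the mass bookkeeping. The cleanest route is to observe that $f_\ell(x)=2^{-\ell}\cdot\#\{\text{components of }C_\ell\text{ lying to the left of }x\}$ at every endpoint of these components. That formula is exactly \eqref{eq:f_ell_formula}, and it is inherited by $f_{\ell+1}$ at the same points.
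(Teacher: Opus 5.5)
Your proof is correct and follows essentially the same route as the paper: agreement of the approximants at the endpoints of each component of $C_\ell$, together with monotonicity, gives the $2^{-\ell}$ bound and the uniform Cauchy property. Monotonicity, continuity, constancy on every $I_{jk}$ and $\mathcal L^1(\mathbb D_\delta)=0$ then identify $Df_\delta$ as a nonnegative Cantor measure of mass $1$ supported in $\mathbb D_\delta$, and the only differences from the paper are cosmetic (comparing consecutive terms and summing instead of comparing $f_m$ with $f_\ell$ directly, and deducing $(f_\delta')^a=0$ from concentration of $Df_\delta$ on the null set $\mathbb D_\delta$ rather than from $f_\delta'=0$ a.e.).
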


\begin{proof}
Let $m\geq\ell$. By \eqref{eq:f_ell_formula}, the functions $f_m$ and
$f_\ell$ coincide  on every interval $I_{jk}$ with $j\leq\ell$. Let
$J_{i\ell}=[a_{i\ell},b_{i\ell}]$ be a connected component of
$C_\ell$. Then $f_m$ and $f_\ell$ have the same endpoint values on
$J_{i\ell}$, and
\[
    f_\ell(b_{i\ell})-f_\ell(a_{i\ell})=2^{-\ell}.
\]
Since both functions are nondecreasing, it follows that, for every
$x\in J_{i\ell}$,
\[
    \big|f_m(x)-f_\ell(x)\big|\leq 2^{-\ell}.
\]
Together with the equality on the removed intervals of levels
$j\leq\ell$, this gives
\[
    \|f_m-f_\ell\|_{C^0([0,1])}\leq 2^{-\ell}.
\]
Hence $(f_\ell)$ is Cauchy in $C^0([0,1])$ and converges uniformly to some
$f_\delta\in C^0([0,1])$. Since every $f_\ell$ is nondecreasing and
satisfies $f_\ell(0)=0$, $f_\ell(1)=1$, the same is true for
$f_\delta$.

It remains to characterize the distributional derivative of $f_\delta$. Each $f_\ell$ is nondecreasing, and therefore the uniform limit $f_\delta$ is nondecreasing as well.
Hence $f_\delta\in BV((0,1))$, and $Df_\delta$ is a finite nonnegative Radon measure.

On every removed interval $I_{jk}$, the functions $f_\ell$ are eventually constant.
Passing to the uniform limit, $f_\delta$ is constant on each $I_{jk}$. Therefore
\[
f_\delta'=0
\qquad\text{a.e. on }[0,1]\setminus\mathbb D_\delta.
\]
Since $\mathcal L^1(\mathbb D_\delta)=0$, this implies
$
D^af_\delta=0.
$
Furthermore, $f_\delta$ is continuous, and hence it has no jump part:
$
D^j f_\delta=0.
$
Since
\[
|Df_\delta|((0,1))
=
f_\delta(1)-f_\delta(0)
=
1,
\]
the derivative is nontrivial. Therefore the whole derivative is its Cantor part:
\[
Df_\delta=D^c f_\delta.
\]
Finally, since $f_\delta$ is constant on every connected component of $[0,1]\setminus\mathbb D_\delta$, the measure $D^c f_\delta$ is supported on $\mathbb D_\delta$.
\end{proof}

            \bibliographystyle{alphaurl}
            \bibliography{lib}


            \end{document}